\newtheorem{theorem}{Theorem}
\newtheorem{example}[theorem]{Example}
\newtheorem{lemma}[theorem]{Lemma}
\newtheorem{proposition}[theorem]{Proposition}
\newtheorem{remark}[theorem]{Remark}
\newenvironment{proof}[1][Proof]{\noindent\textbf{#1.} }{\ \rule{0.5em}{0.5em}}
\newcommand{\dint}{\int}
\newcommand{\tsum}{\sum}
\newcommand{\dbigcup}{\bigcup}
\renewcommand{\nolimits}{}
\renewcommand{\limits}{}
\def\disp{\displaystyle}
\def\Limsup{\mathop{{\rm Lim}\,{\rm sup}}}
\def\tto{\;{\lower 1pt \hbox{$\rightarrow$}}\kern -10pt
\hbox{\raise 2pt \hbox{$\rightarrow$}}\;}
\def\Hat{\widehat}
\def\Bar{\overline}
\def\ra{\rangle}
\def\la{\langle}
\def\ve{\varepsilon}
\def\R{I\!\!R}
\def\ox{\bar{x}}
\def\oy{\bar{y}}
\def\oz{\bar{z}}
\def\op{\bar{p}}
\def\co{\mbox{\rm co}\,}
\def\gph{\mbox{\rm gph}\,}
\def\epi{\mbox{\rm epi}\,}
\def\dom{\mbox{\rm dom}\,}
\def\lip{\mbox{\rm lip}\,}
\def\cl{\mbox{\rm cl}\,}
\def\cone{\mbox{\rm cone}\,}
\def\dn{\downarrow}
\def\ph{\varphi}
\def\emp{\emptyset}
\def\st{\stackrel}
\def\oR{\Bar{\R}}
\def\lm{\lambda}
\def\dd{\delta}
\newcounter{lk}
\begin{document}
\begin{center}
\textbf{QUANTITATIVE STABILITY AND OPTIMALITY CONDITIONS IN CONVEX
SEMI-INFINITE AND INFINITE PROGRAMMING}\footnote{ This research
was partially supported by grants MTM2008-06695-C03
(01-02) from MICINN (Spain).}\\[3ex]
M. J. C\'{A}NOVAS\footnote{Center of Operations Research, Miguel
Hern\'{a}ndez University of Elche, 03202 Elche (Alicante), Spain
(canovas@umh.es, parra@umh.es).}, M. A. L\'{O}PEZ\footnote{
Department of Statistics and Operations Research, University of
Alicante, 03080 Alicante, Spain (marco.antonio@ua.es).}, B. S.
MORDUKHOVICH\footnote{Department of Mathematics, Wayne State
University, Detroit, MI 48202, USA (boris@math.wayne.edu). The
research of this author was partially supported the US National
Science Foundation under grants DMS-0603848 and DMS-1007132.} and
\linebreak J. PARRA\footnotemark[2]
\end{center}

{\small\textbf{Abstract.} This paper concerns parameterized convex
infinite (or semi-infinite) inequality systems whose decision
variables run over general infinite-dimensional Banach (resp.\
finite-dimensional) spaces and that are indexed by an arbitrary
fixed set $T$. Parameter perturbations on the right-hand side of
the inequalities are measurable and bounded, and thus the natural
parameter space is $l_{\infty}(T)$. Based on advanced variational
analysis, we derive a precise formula for computing the exact
Lipschitzian bound of the feasible solution map, which involves
only the system data, and then show that this exact bound agrees
with the coderivative norm of the aforementioned mapping. On one
hand, in this way we extend to the convex setting the results of
\cite{CLMP09} developed in the linear framework under the
boundedness assumption on the system coefficients. On the other
hand, in the case when the decision space is reflexive, we succeed
to remove this boundedness assumption in the general convex case,
establishing therefore results new even for linear infinite and
semi-infinite systems. The last part of the paper provides
verifiable necessary optimality conditions for infinite and
semi-infinite programs with convex inequality constraints and
general nonsmooth and nonconvex objectives. In this way we extend
the corresponding results of \cite{CLMP10} obtained for programs
with linear infinite inequality constraints.}\vspace*{0.05in}

{\small \textbf{Key words.} semi-infinite and infinite
programming, parametric optimization, variational analysis, convex
infinite inequality systems, quantitative stability, Lipschitzian
bounds, generalized differentiation, coderivatives
\vspace*{0.05in} }\vspace*{0.05in}

{\small\textbf{AMS subject classification.} 90C34, 90C25, 49J52,
49J53, 65F22 }\bigskip

\section{Introduction}

Many optimization problems are formulated in the form:
\begin{equation*}
\begin{array}{rrl}
\mathrm{(P)} & \mathrm{inf} & \varphi (x) \\
& {\text{s.t.}} & f_{t}(x)\le 0,\;t\in T,
\end{array}
\end{equation*}
where $T$ is an arbitrary \emph{index set}, where $x\in X$ is a
{\em decision variable} selected from a general Banach space $X$
with its topological dual denoted by $X^*$, and where
$f_{t}:X\rightarrow\overline{\mathbb{R}}:=\mathbb{R}\cup\{\infty
\},$ $t\in T$, are proper lower semicontinuous (lsc) convex
functions; these are our {\em standing assumptions}. In this paper
we analyze {\em quantitative stability} of the feasible set of
$\mathrm{(P)}$ under small perturbations on the right-hand side of
the constraints. In more detail, the paper is focused on
characterizing {\em Lipschitzian behavior} of the feasible
solution map, with computing the {\em exact bound} of Lipschitzian
moduli by using appropriate tools of advanced variational analysis
and generalized differentiation particularly based on {\em
coderivatives}; see below.

In what follows we consider the \emph{parametric convex inequality
system}
\begin{equation}
\sigma(p):=\big\{f_{t}\left( x\right)\le p_{t},\ t\in T\big\},
\label{eq_conv_sys}
\end{equation}
where the functional parameter $p$ is a measurable and essentially
bounded function $p:T\rightarrow\mathbb{R}$, i.e., $p$ belongs to
the Banach space $l_{\infty }(T)$; we use the notation $p_{t}$ for
$p(t)$, $t\in T$. The zero function $\overline{p}=0$ is regarded
as the \emph{nominal parameter}. This assumption does not entail
any loss of generality.

Recall that the \emph{parameter space} $l_{\infty }(T)$ is a
Banach space with the norm
\begin{equation*}
\left\Vert p\right\Vert:=\sup_{t\in T}\left\vert p_{t}\right\vert.
\end{equation*}
If no confusion arises, we also use the same notation $\Vert\cdot
\Vert$ for the given norm in $X$ and for the corresponding dual
norm in $X^{\ast }$ defined by
\begin{equation*}
\left\Vert x^{\ast }\right\Vert :=\sup\limits_{\left\Vert
x\right\Vert \leq 1}\left\langle x^{\ast },x\right\rangle\;\text{
for any }\;x^{\ast }\in X^{\ast},
\end{equation*}
where $\left\langle x^{\ast },x\right\rangle$ stands for the
standard canonical pairing. Our main attention is focused on the
\emph{feasible solution map} $\mathcal{F}:l_{\infty
}(T)\rightrightarrows X$ defined by
\begin{equation}
\mathcal{F}(p):=\big\{x\in X\big|\;x\text{ is a solution to
}\sigma (p)\big\}. \label{eq_fsm}
\end{equation}

The convex system $\sigma (p)$ with $p\in l_{\infty }(T)$ can be
{\em linearized} by using the \emph{Fenchel-Legendre conjugate}
$f_{t}^ {\ast }:X^{\ast }\rightarrow\overline{\mathbb{R}}$ for
each function $f_{t}$ given by
\begin{equation*}
f_{t}^{\ast}\left( u^{\ast }\right):=\sup\big\{\left\langle
u^{\ast },x\right\rangle -f_{t}\left( x\right)\big|\;x\in
X\big\}=\sup\big\{\left\langle u^{\ast },x\right\rangle
-f_{t}\left(x\right)\big|\;x\in \mathrm{dom}f_{t}\big\},
\end{equation*}
where $\mathrm{dom}f_{t}:=\left\{ x\in X\mid f_{t}\left( x\right)
<\infty \right\} $ is the effective domain of $f_{t}$.
Specifically, under the current assumptions on each $f_{t}$ its
conjugate $f_{t}^{\ast }$ is also a proper lsc convex function
such that
\begin{equation*}
f_{t}^{\ast \ast }=f_{t}\;\mbox{ on }\;X\;\mbox{ with
}\;f_{t}^{\ast \ast }:=\left( f_{t}^{\ast}\right)^{\ast }.
\end{equation*}
In this way, for each $t\in T$, the inequality $f_{t}\left(
x\right)\le p_{t}$ turns out to be equivalent to the linear system
\begin{equation*}
\left\{\left\langle u^{\ast},x\right\rangle -f_{t}^{\ast }\left(
u^{\ast }\right) \leq p_{t},\text{ }u^{\ast }\in
\mathrm{dom}f_{t}^{\ast}\right\}
\end{equation*}
in the sense that they have the same solution sets. Then we
consider the following parametric family of linear systems:
\begin{equation}
\widetilde{\sigma}(\rho ):=\big\{\left\langle u^{\ast
},x\right\rangle \leq f_{t}^{\ast }\left( u^{\ast }\right) +\rho
\left( t,u^{\ast }\right) ,\ \left( t,u^{\ast }\right) \in
\widetilde{T}\big\},\label{q_lin_sys}
\end{equation}
where $\widetilde{T}:=\left\{\left( t,u^{\ast }\right)\in T\times
X^{\ast }\;|\;u^{\ast }\in \mathrm{dom}f_{t}^{\ast }\right\}$, and
the associated feasible set mapping
$\widetilde{\mathcal{F}}:l_{\infty}(\widetilde{T})\rightrightarrows
X$ given by
\begin{equation}
\widetilde{\mathcal{F}}\left(\rho \right):=\big\{x\in X\big|\
\;x\text{ is a solution to }\;\widetilde{\sigma}(\rho)\big\}.
\label{eq_feasible_tilde}
\end{equation}
Thus our initial family $\left\{\mathcal{F}\left( p\right),~p\in
l_{\infty}(T)\right\}$ can be straightforwardly embedded into the
family $\{\widetilde{\mathcal{F}}\left(\rho\right),~\rho \in
l_{\infty }(\widetilde{ T})\}$ through the relation
\begin{equation}
\mathcal{F}\left( p\right)=\widetilde{\mathcal{F}}\left( \rho
_{p}\right)\;\text{ for }\;p\in l_{\infty
}(T),\label{eq_equality_feasible}
\end{equation}
where $\rho_{p}\in l_{\infty }(\widetilde{T})$ is defined by
\begin{equation*}
\rho_{p}\left(t,u^{\ast }\right) :=p_{t}\;\text{ for }\;\left(
t,u^{\ast }\right) \in \widetilde{T}.
\end{equation*}
We consider the supremum norm in $l_{\infty}(\widetilde{T})$,
which for the sake of simplicity is also denoted by $\Vert\cdot
\Vert$. Note that
\begin{equation*}
\left\Vert p\right\Vert =\sup_{t\in T}\left\vert p_{t}\right\vert
=\sup_{\left( t,u\right) \in \widetilde{T}}\left\vert \rho
_{p}\left( t,u\right) \right\vert =\left\Vert \rho
_{p}\right\Vert.
\end{equation*}

Since the $f_{t}$'s are fixed functions, the structure of
$\widetilde{\sigma }(\rho )$, $\rho \in l_{\infty
}(\widetilde{T})$, fits into the context analyzed in
\cite{CLMP09}, and some results of the present paper take
advantage of this fact. The implementation of this idea requires
establishing precise relationships between Lipschitzian behavior
of $\mathcal{F}$ at the nominal parameter $\overline{p}=0$ and
that of $\widetilde{\mathcal{F}}$ at $\rho_{\overline{p}}=0$,
which is done in what follows. This approach allows us to derive
{\em characterizations of quantitative/Lipschitzian stability} of
parameterized sets of feasible solutions described by infinite
systems of {\em convex} inequalities, with computing the exact
bound of Lipschitzian moduli, from those obtained in \cite{CLMP09}
for their linear counterparts in general Banach spaces.

Furthermore, in the case of {\em reflexive} spaces of decision
variables we manage to {\em remove} the {\em boundedness}
requirement on coefficients of linear systems imposed in
\cite{CLMP09} and thus establish in this way complete
characterizations of quantitative stability of general convex
systems of infinite inequalities under the most natural
assumptions on the initial data.

Our approach to the study of quantitative stability of infinite
convex systems is mainly based on {\em coderivative analysis} of
set-valued mappings of type \eqref{eq_fsm}. As a by-product of
this approach, we derive verifiable {\em necessary optimality
conditions} for semi-infinite programs with convex inequality
constraints and general (nonsmooth and nonconvex) objective
functions.\vspace*{0.05in}

The rest of the paper is organized as follows. Section~2 presents
some basic definitions and key results from variational analysis
and generalized differentiation needed in what follows.

In Section~3 we derive auxiliary results for infinite systems of
convex inequalities used in the proofs of the main results of the
paper.

Section~4 is devoted to the quantitative stability analysis of
parameterized infinite systems of convex inequalities by means of
coderivatives in arbitrary Banach spaces of decision variables.
Based on this variational technique, we establish verifiable
characterizations of the Lipschitz-like property of the perturbed
feasible solution map \eqref{eq_fsm} with precise computing the
exact Lipschitzian bound in terms of the initial data of
\eqref{eq_conv_sys}. This is done by reducing \eqref{eq_conv_sys}
to the linearized system \eqref{q_lin_sys} in the way discussed
above.

In Section~5 we show how to remove, in the case of reflexive
decision spaces, the boundedness assumption on coefficients of
linear infinite systems and hence for the general convex infinite
systems \eqref{eq_conv_sys} via the linearization procedure
\eqref{q_lin_sys} in the above quantitative stability analysis and
characterizations.

Finally, Section~6 is devoted to deriving subdifferential
optimality conditions for semi-infinite and infinite programs of
type $\mathrm{(P)}$ with convex infinite constraints and
nondifferentiable (generally nonconvex) objectives.
\vspace*{0.05in}

Our notation  is basically standard in the areas of variational
analysis and semi-infinite/infinite programming; see, e.g.,
\cite{GoLo98,mor06a}. Unless otherwise stated, all the spaces
under consideration are Banach. The symbol $w^*$ signifies the
weak$^*$ topology of a dual space, and thus the weak$^*$
topological limit corresponds to the weak$^*$ convergence of nets.
Some particular notation will be recalled, if necessary, in the
places where they are introduced.

\section{Preliminaries from Variational Analysis}

Given a set-valued mapping $F\colon Z\rightrightarrows Y$ between
Banach spaces $Z$ and $Y$, we say the $F$ is \emph{Lipschitz-like
around} $(\bar{z},\bar{y})\in\gph F$ with \emph{modulus} $\ell
\geq 0$ if there are neighborhoods $U$ of $\bar{z}$ and $V$ of
$\bar{y}$ such that
\begin{equation}
F(z)\cap V\subset F(u)+\ell \Vert z-u\Vert \mathbb{B}\;\text{ for
any }\;z,u\in U,\label{eq_Lips}
\end{equation}
where $\mathbb{B}$ stands for the closed unit ball in the space in
question. The infimum of moduli $\{\ell\}$ over all the
combinations of $\{\ell ,U,V\}$ satisfying (\ref{eq_Lips}) is
called the \emph{exact Lipschitzian bound} of $ F$ around
$(\bar{z},\bar{y})$ and is labeled as $\lip F(\bar{z},\bar{y})$.

If $V=Y$ in (\ref{eq_Lips}), this relationship signifies the
classical (Hausdorff) \emph{local Lipschitzian} property of $F$
around $\bar{z}$ with the \emph{exact Lipschitzian bound} denoted
by $\lip F(\bar{z})$ in this case.

It is worth mentioning that the Lipschitz-like property (also
known as the Aubin or pseudo-Lipschitz property) of an arbitrary
mapping $F\colon Z\rightrightarrows Y$ between Banach spaces is
equivalent to other two fundamental properties in nonlinear
analysis while defined for the inverse mapping $F^{-1}\colon
Y\rightrightarrows X$; namely, to the \emph{metric regularity} of
$F^{-1}$ and to the \emph{linear openness} of $F^{-1}$ around
$(\bar{y},\bar{z})$, with the corresponding relationships between
their exact bounds (see, e.g. \cite{iof,mor06a,rw}). From these
relationships we can easily observe the following representation
for the exact Lipschitzian bound:
\begin{equation}
\lip
F(\bar{z},\bar{y})=\limsup_{(z,y)\rightarrow(\bar{z},\bar{y})}
\frac{\mbox{dist}\big(y;F(z)\big)}{\mbox{dist}\big(z;F^{-1}(y)\big)},
\label{eq_quotient}
\end{equation}
where $\inf\emptyset:=\infty$ (and hence $\mbox{dist}(x;\emptyset
)=\infty $) as usual, and where $0/0:=0$. We have accordingly that
$\lip F(\bar{z},\bar{y})=\infty$ if $F$ is not Lipschitz-like
around $(\bar{z},\bar{y})$.

A remarkable fact consists of the possibility to characterize
pointwisely the (derivative-free) Lipschitz-like property of $F$
around $(\bar{z},\bar{y})$---and hence its local Lipschitzian,
metric regularity, and linear openness counterparts---in terms of
a dual-space construction of generalized differentiation called
the {\em coderivative} of $F$ at $(\bar{z},\bar{y})\in\gph F$. The
latter is a positively homogeneous multifunction $D^{\ast
}F(\bar{z},\bar{y})\colon Y^{\ast }\rightrightarrows Z^{\ast }$
defined by
\begin{equation}\label{cod}
D^{\ast}F(\bar{z},\bar{y})(y^{\ast}):=\big\{z^{\ast}\in
Z^{\ast}\big|\;(z^{\ast},-y^{\ast})\in
N\big((\bar{z},\bar{y});\gph F\big)\big\},\quad y^{\ast}\in
Y^{\ast},
\end{equation}
where $N(\cdot ;\Omega)$ stands for the collection of generalized
normals to a set at a given point known as the \emph{basic}, or
\emph{limiting}, or \emph{Mordukhovich normal cone}; see, e.g.
\cite{mor76,mor06a,rw,s} and references therein. When both $Z$ and
$Y$ are finite-dimensional, it is proved in \cite{mor93} (cf.\
also \cite[Theorem~9.40]{rw}) that a closed-graph mapping $F\colon
Z\rightrightarrows Y$ id Lipschitz-like around
$(\bar{z},\bar{y})\in\gph F$ if and only if
\begin{equation}\label{cod-cr}
D^{\ast }F(\bar{z},\bar{y})(0)=\{0\},
\end{equation}
and the exact Lipschitzian bound of moduli $\{\ell \}$ in (\ref{eq_Lips}) is
computed by
\begin{equation}
\lip F(\bar{z},\bar{y})=\Vert D^{\ast}F(\bar{z},\bar{y})\Vert
:=\sup\big\{\Vert z^{\ast }\Vert \;\big|\;z^{\ast}\in
D^{\ast}F(\bar{z},\bar{y} )(y^{\ast}),\;\Vert y^{\ast }\Vert\le
1\big\}.  \label{eq_norm coderiv}
\end{equation}
There is an extension \cite[Theorem~4.10]{mor06a} of the
coderivative criterion \eqref{cod-cr}, via the so-called mixed
coderivative of $F$ at $(\oz,\oy)$, to the case when both spaces
$Z$ and $Y$ are Asplund (i.e., their separable subspaces have
separable duals) under some additional ``partial normal
compactness" assumption that is automatic in finite dimensions.
Also the aforementioned theorem contains an extension of the exact
bound formula \eqref{eq_norm coderiv} provided that $Y$ is Asplund
while $Z$ is finite-dimensional. Unfortunately, none of these
results is applied in our setting \eqref{eq_fsm}.

Indeed, the underlying set-valued mapping \eqref{eq_fsm}
considered in this paper is $\mathcal{F}\colon l_{\infty
}(T)\rightrightarrows X$ defined by the infinite system of convex
inequalities (\ref{eq_conv_sys}). The graph $\gph \mathcal{F}$ of
this mapping is obviously convex, and we can easily verify that it
is also closed with respect to the product topology. If the index
set $T$ is infinite, $l_{\infty}(T)$ is an infinite-dimensional
Banach space, which is {\em never Asplund}. There exists an
isometric isomorphism between the topological dual $l_{\infty
}(T)^{\ast }$ and the space $ba(T)$ of additive and bounded
measures $\mu\colon T\tto\mathbb{R}$ such that
\begin{equation*}
\left\langle \mu ,y\right\rangle =\dint\nolimits_{T}y_{t}\ \mu (dt).
\end{equation*}
The \emph{dual norm }$\left\Vert\mu\right\Vert$ is the total
variation of $\mu $ on $T$, i.e.,
\begin{equation*}
\left\Vert\mu \right\Vert =\sup_{A\subset T}\mu (A)-\inf_{B\subset
T}\mu (B).
\end{equation*}
All these topological facts are classical and can be found, e.g.,
in \cite{DS}.

\section{Auxiliary Results for Infinite Convex Systems}

Given a subset $S$ of a normed space, the notation $\co S$ and
$\cone S$ stand for the convex hull and the conic convex hull of
$S$, respectively. The symbol $\mathbb{R}_{+}$ signifies the
interval $\left[0,\infty\right)$, and by
$\mathbb{R}_{+}^{\left(T\right)}$ we denote the collection of all
the functions $\lambda=\left(\lambda _{t}\right)_{t\in T}\in
\mathbb{R}_{+}^{T}$ such that $\lambda_{t}>0$ for only {\em
finitely many} $t\in T$. As usual, $\mathrm{cl}^{\ast}S$ stands
for the weak$^*$ ($w^{\ast}$ in brief) topological closure of $S$.

The \textit{indicator function} $\delta_{S}=\dd(\cdot;S)$ of the
set $S$ is defined by $\delta_{S}(x):=0$ if $x\in S$ and
$\delta_{S}(x):=\infty $ if $x\notin S$. It is easy to see that
$S$ is a nonempty closed convex set if and only if $\delta _{S}$
is a proper lsc convex function. For a function $h:X\rightarrow
\overline{\mathbb{R}}$ the \emph{epigraph} of $h$ is given by
\begin{equation*}
\epi h:=\big\{(x,\gamma)\in X\times \mathbb{R}\big|\;x\in\dom h,\
h(x)\le\gamma\big \}.
\end{equation*}

The following \emph{extended Farkas' Lemma} is a key tool in our
analysis.

\begin{lemma}\label{Lem_Farkas copy(1)}\emph{(cf.\ \cite[Theorem 4.1]{DGL07})} For $p\in
\dom(\mathcal{F)}$ and $\left(v,\alpha\right)\in X^{\ast}\times
\mathbb{R}$, the following statements are equivalent:

\emph{(i) }$v(x)\leq \alpha \,$is a consequence of $\sigma (p);$
i.e., $v(x)\le\alpha $ for all $x\in\mathcal{F}\left(p\right)$.

\emph{(ii) }$\left( v,\alpha \right)\in\cl\nolimits^{\ast}\left(
\cone\dbigcup\nolimits_{t\in
T}\mathrm{epi}(f_{t}-p_{t})^{\ast}\right)$.
\end{lemma}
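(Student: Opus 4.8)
The plan is to prove the two implications separately, the forward one by a Hahn–Banach separation argument in the $w^{\ast}$-topology of $X^{\ast}\times\mathbb{R}$ (whose topological dual is the predual $X\times\mathbb{R}$). Throughout I would write $K:=\cl\nolimits^{\ast}(\cone\bigcup_{t\in T}\epi(f_t-p_t)^{\ast})$ and record the elementary shift identity $(f_t-p_t)^{\ast}(u^{\ast})=f_t^{\ast}(u^{\ast})+p_t$, so that the generators of the cone are exactly the epigraphical points lying on or above $(u^{\ast},f_t^{\ast}(u^{\ast})+p_t)$ for $(t,u^{\ast})\in\widetilde T$. For the implication (ii)$\Rightarrow$(i) I would fix $x\in\mathcal{F}(p)$ and, for a generic generator $(u^{\ast},\gamma)\in\epi(f_t-p_t)^{\ast}$, combine the Young–Fenchel inequality with $f_t(x)\le p_t$ to get $\langle u^{\ast},x\rangle-\gamma\le\langle u^{\ast},x\rangle-f_t^{\ast}(u^{\ast})-p_t\le f_t(x)-p_t\le 0$. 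Since the functional $(u^{\ast},\gamma)\mapsto\langle u^{\ast},x\rangle-\gamma$ is linear and $w^{\ast}$-continuous (being represented by the predual element $(x,-1)$) and nonpositive on every generator, it stays nonpositive on $\cone\bigcup_{t\in T}\epi(f_t-p_t)^{\ast}$ and, by $w^{\ast}$-continuity, on its closure $K$; applying this to $(v,\alpha)\in K$ yields $\langle v,x\rangle-\alpha\le 0$, which is (i).

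The implication (i)$\Rightarrow$(ii) I would prove by contraposition. Assuming $(v,\alpha)\notin K$ and using that $K$ is a $w^{\ast}$-closed convex cone, the separation theorem supplies a predual element $(x_0,s)\in X\times\mathbb{R}$ with $\langle u^{\ast},x_0\rangle+s\gamma\le 0$ for all $(u^{\ast},\gamma)\in K$ while $\langle v,x_0\rangle+s\alpha>0$. Testing the first inequality on $(0,1)$ (which lies in $K$ because $\varepsilon(u^{\ast},f_t^{\ast}(u^{\ast})+p_t+\varepsilon^{-1})\to(0,1)$ as $\varepsilon\downarrow 0$ for any fixed $u^{\ast}\in\dom f_t^{\ast}$) forces $s\le 0$. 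If $s<0$, I would rescale to $s=-1$, obtaining $\langle u^{\ast},x_0\rangle\le f_t^{\ast}(u^{\ast})+p_t$ for every $(t,u^{\ast})\in\widetilde T$; via the biconjugate relation $f_t^{\ast\ast}=f_t$ this says precisely that $x_0\in\mathcal{F}(p)$, while $\langle v,x_0\rangle>\alpha$ then contradicts (i).

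The step I expect to be the main obstacle is the homogeneous case $s=0$, where the separating functional detects only a recession direction rather than a feasible point, so one cannot read off a violator directly. In this case $\langle u^{\ast},x_0\rangle\le 0$ for all $(t,u^{\ast})\in\widetilde T$ and $\langle v,x_0\rangle>0$. Here I would invoke the hypothesis $p\in\dom\mathcal{F}$ to pick some $\overline x\in\mathcal{F}(p)$ and check, again through $f_t=f_t^{\ast\ast}$, that the whole ray $\overline x+\tau x_0$ stays feasible for $\tau\ge 0$: indeed $\langle u^{\ast},\overline x+\tau x_0\rangle\le\langle u^{\ast},\overline x\rangle\le f_t^{\ast}(u^{\ast})+p_t$, whence $f_t(\overline x+\tau x_0)\le p_t$. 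Then $\langle v,\overline x+\tau x_0\rangle=\langle v,\overline x\rangle+\tau\langle v,x_0\rangle\to+\infty$, so $v$ is unbounded above on $\mathcal{F}(p)$, contradicting (i) and completing the argument.

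An alternative, shorter route I would keep in mind is to pass through the linearization: the system $\widetilde\sigma(\rho_p)$ of \eqref{q_lin_sys} has the same solution set $\mathcal{F}(p)$, and one need only check that its characteristic cone $\cl\nolimits^{\ast}\cone(\{(u^{\ast},f_t^{\ast}(u^{\ast})+p_t)\}\cup\{(0,1)\})$ coincides with $K$ (since every epigraphical generator is a boundary point plus a nonnegative multiple of $(0,1)$), after which the classical nonhomogeneous Farkas lemma for possibly infinite linear systems applies verbatim. The separation computation sketched above is essentially a specialization of the proof of that lemma to the present convex data.
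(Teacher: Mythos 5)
Your proof is correct, but it takes a genuinely different route from the paper's. The paper does not argue from scratch: it invokes \cite[Theorem~4.1]{DGL07}, which characterizes (i) by the inclusion $\left(v,\alpha\right)\in\mathrm{cl}^{\ast}\left(\mathrm{cone}\bigcup_{t\in T}\mathrm{epi}(f_{t}-p_{t})^{\ast}+\mathbb{R}_{+}(0,1)\right)$, and then reduces this to (ii) by a single observation: $(0,1)$ already lies in $\mathrm{cl}^{\ast}\left(\mathrm{cone}\bigcup_{t\in T}\mathrm{epi}(f_{t}-p_{t})^{\ast}\right)$, since $(0,1)=\lim_{r\rightarrow\infty}\frac{1}{r}\left(w,\beta+r\right)$ for any $(w,\beta)$ in one of the epigraphs, so the vertical ray $\mathbb{R}_{+}(0,1)$ is absorbed by the closed cone. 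You instead give a self-contained proof: the easy direction via Young--Fenchel plus feasibility, and the converse by weak$^{\ast}$ separation against the predual $X\times\mathbb{R}$, with the degenerate homogeneous case $s=0$ handled by the recession-ray argument that exploits $p\in\mathrm{dom}\,\mathcal{F}$. Amusingly, both proofs pivot on the same small fact that $(0,1)$ belongs to the closed cone --- the paper uses it to absorb the ray, you use it to force $s\le 0$. What the paper's route buys is brevity and an explicit link to the literature; what yours buys is independence from \cite{DGL07} (in effect you reprove the relevant special case of that Farkas lemma, much as your final remark anticipates) and transparency about where consistency of $\sigma(p)$ is actually needed. Your steps all check out: the rescaling to $s=-1$ combined with $f_{t}^{\ast\ast}=f_{t}$ legitimately manufactures a feasible point violating $v(x)\le\alpha$, and in the case $s=0$ the ray $\overline{x}+\tau x_{0}$ indeed remains feasible, making $v$ unbounded above on $\mathcal{F}(p)$.
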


\begin{proof}
Theorem 4.1 in \cite{DGL07} yields the equivalence between (i) and
the inclusion
\begin{equation*}
\left(v,\alpha \right)\in\cl\nolimits^{\ast }\left(\cone
\dbigcup\nolimits_{t\in T}\epi(f_{t}-p_{t})^{\ast}+\mathbb{R}
_{+}(0,1)\right).
\end{equation*}
Thus it suffices to observe that
$(0,1)\in\cl\nolimits^{\ast}\left(\cone\dbigcup\nolimits_{t\in
T}\mathrm{epi}(f_{t}-p_{t})^{\ast }\right)$. To do this, pick any
$\left( w,\beta
\right)\in\mathrm{epi}(f_{t_{0}}-p_{t_{0}})^{\ast}$ for some
$t_{0}\in T$ and note that
\begin{equation*}
(0,1)=\lim_{r\rightarrow \infty }\frac{1}{r}\left(w,\beta
+r\right),
\end{equation*}
where the limit is taken with respect to the strong topology.
\end{proof}

\begin{remark}
\label{Rem1}\emph{As an application of the previous lemma,
together with the Br\o ndsted-Rockafellar theorem (which yields,
for each }$t\in T,$ \emph{that}
\begin{equation*}
\mathrm{rge}(\partial f_{t})\subset
\mathrm{dom}(f_{t}^{\ast})\subset\cl^{\ast }\left(
\mathrm{rge}(\partial f_{t})\right);
\end{equation*}
\emph{see, e.g. \cite[Theorem~3.1.2]{z}}$),$\emph{\ we get the
representation}
\begin{equation*}
\mathcal{F}\left(p\right)=\big\{ x\in X\big|\;\left\langle u^{\ast
},x\right\rangle -f_{t}^{\ast }\left( u^{\ast }\right)\le\ p_{t},\text{ }%
t\in T,\text{ }u^{\ast }\in \mathrm{rge}(\partial f_{t})\big\}
\end{equation*}
\emph{providing an alternative way of linearizing our convex system (\ref%
{eq_conv_sys}).}
\end{remark}

Let us now define, for $p\in l_{\infty}(T),$ the sets {\normalsize
\begin{equation}\label{H}
H\left( p\right):=\co\left( \dbigcup\nolimits_{t\in T}\epi
(f_{t}-p_{t})^{\ast}\right)\subset X^{\ast}\times \mathbb{R},
\end{equation}
\begin{equation}\label{C}
C\left( p\right):=\co \left(\bigcup\nolimits_{t\in T}\gph
(f_{t}-p_{t})^{\ast}\right) \subset X^{\ast }\times\mathbb{R}.
\end{equation}
Note that in the case of linear constraints of the type $``\ge
\textquotedblright$ the set $H(p)$ in \eqref{H} coincides with
what was called \emph{hypographical set} in
\cite{CLPT05}.\vspace*{0.05in}

We say that the system {\normalsize $\sigma\left(0\right)$}
satisfies the {\normalsize \ \emph{strong Slater condition} (SSC)}
if there exists a point {\normalsize \ }$\widehat{x}\in X$ such
that
\begin{equation*}
\sup_{t\in T}f_{t}(\widehat{x})<0.
\end{equation*}
In this case $\widehat{x}$ is called a {\em strong Slater point}
for {\normalsize $ \sigma \left( 0\right).$} Note that
$\widehat{x}$ is a strong Slater point for $\sigma \left( 0\right)
$ if and only if $\widehat{x}$ is a strong Slater point for the
linear system $\widetilde{\sigma }\left( 0\right)$, i.e.,
$\sup_{\left( t,u^{\ast }\right) \in \widetilde{T}}\{\left\langle
u^{\ast },\widehat{x}\right\rangle -f_{t}^{\ast }\left( u^{\ast
}\right) \}<0$.

\begin{lemma}
\label{Lemma1} Assume that $0\in\dom\mathcal{F}$. The following
statements are equivalent:

{\bf (i)} $\sigma\left(0\right)$ satisfies the SSC.

{\bf (ii)} $0\in{\rm{int}}(\dom\mathcal{F)}$.

{\bf (iii)} $\mathcal{F}$ is Lipschitz-like around $(0,x)$ for all
$x\in\mathcal{F}\left(0\right)$

{\bf (iv)} $(0,0)\notin\cl^{\ast}H\left(0\right)$.

{\bf (v)} $(0,0)\notin\cl^{\ast}C\left(0\right)$.
\end{lemma}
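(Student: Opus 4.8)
The plan is to establish the five statements by proving the ``soft'' equivalences (i)$\Leftrightarrow$(ii)$\Leftrightarrow$(iii) directly from the definitions and a convexity argument, and then to tie the geometric/dual conditions (iv),(v) to the strong Slater condition through convex separation in $(X^{\ast}\times\mathbb{R},w^{\ast})$. First, (i)$\Leftrightarrow$(ii) is immediate. If $\widehat{x}$ is a strong Slater point with $\sup_{t}f_{t}(\widehat{x})\le-\delta<0$, then every $p$ with $\|p\|<\delta$ satisfies $f_{t}(\widehat{x})\le-\delta<p_{t}$, so $\widehat{x}\in\mathcal{F}(p)$ and the open ball of radius $\delta$ lies in $\dom\mathcal{F}$. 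Conversely, if $0\in\mathrm{int}(\dom\mathcal{F})$ then the constant parameter $p\equiv-\varepsilon$ is feasible for small $\varepsilon>0$, and any $x\in\mathcal{F}(-\varepsilon\mathbf{1})$ obeys $\sup_{t}f_{t}(x)\le-\varepsilon<0$, i.e.\ SSC. For (ii)$\Leftrightarrow$(iii) I would invoke convexity: $\gph\mathcal{F}$ is closed and convex (as noted in Section~2), so by the classical Robinson--Ursescu theorem for closed convex multifunctions, $\mathcal{F}$ is Lipschitz-like around $(0,x)$ if and only if $0\in\mathrm{int}(\dom\mathcal{F})$, and when this holds it holds simultaneously at every $x\in\mathcal{F}(0)$; the implication (iii)$\Rightarrow$(ii) also follows directly, since Lipschitz-likeness around $(0,x)$ forces $\mathcal{F}(u)\neq\emptyset$ for all $u$ near $0$.

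The core of the argument is (i)$\Leftrightarrow$(iv). For (i)$\Rightarrow$(iv) I use the Fenchel--Young inequality: for every $t$ and every $(v,\alpha)\in\epi f_{t}^{\ast}$ we have $\langle v,\widehat{x}\rangle-\alpha\le\langle v,\widehat{x}\rangle-f_{t}^{\ast}(v)\le f_{t}(\widehat{x})\le-\delta$. Since $(v,\alpha)\mapsto\langle v,\widehat{x}\rangle-\alpha$ is linear and $w^{\ast}$-continuous, this bound passes to $\co$ and then to the $w^{\ast}$-closure, so it holds on $\cl^{\ast}H(0)$; as it evaluates to $0$ at $(0,0)$, we conclude $(0,0)\notin\cl^{\ast}H(0)$. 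For the converse, $\cl^{\ast}H(0)$ is $w^{\ast}$-closed and convex and misses $(0,0)$, so I strictly separate it by some $(\widehat{x},s)\in X\times\mathbb{R}$ (the $w^{\ast}$-predual of $X^{\ast}\times\mathbb{R}$), obtaining $\langle v,\widehat{x}\rangle+s\alpha\le c<0$ on $H(0)$. Upward-unboundedness of $H(0)$ in the last coordinate forces $s\le0$, and after normalizing $s=-1$ the inequality $\langle v,\widehat{x}\rangle-\alpha\le c$ gives $f_{t}(\widehat{x})=f_{t}^{\ast\ast}(\widehat{x})=\sup_{v}[\langle v,\widehat{x}\rangle-f_{t}^{\ast}(v)]\le c<0$ for all $t$, i.e.\ SSC.

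The delicate point, and exactly where the standing hypothesis $0\in\dom\mathcal{F}$ becomes indispensable, is the degenerate case $s=0$ in the separation, where the separating functional carries no information in the ``vertical'' (constraint) direction. Here I would fix any $x_{0}\in\mathcal{F}(0)$ and record that, by Fenchel--Young and $f_{t}(x_{0})\le0$, one has $\langle v,x_{0}\rangle-\alpha\le0$ on $H(0)$; adding $\lambda>0$ times this inequality to the $s=0$ separation produces the functional $(\widehat{x}+\lambda x_{0},-\lambda)$, which is strictly negative on $H(0)$ and reduces matters to the nondegenerate case just treated. The same consistency point settles (iv)$\Leftrightarrow$(v): from $\epi g=\gph g+(\{0\}\times\mathbb{R}_{+})$ one gets $H(0)=C(0)+(\{0\}\times\mathbb{R}_{+})$, so $C(0)\subset H(0)$ yields (iv)$\Rightarrow$(v) for free; for the reverse, the bound $\alpha\ge\langle v,x_{0}\rangle$ valid on all of $C(0)$ shows that any net in $H(0)=C(0)+(\{0\}\times\mathbb{R}_{+})$ converging $w^{\ast}$ to $(0,0)$ has the last coordinate of its $C(0)$-part squeezed between $\langle v,x_{0}\rangle\to0$ and a quantity tending to $0$, hence forced to $0$, placing $(0,0)\in\cl^{\ast}C(0)$ as well.

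I expect the main obstacle to be precisely this ``degenerate-multiplier'' bookkeeping—ruling out, or rather repairing, the separation in which the normal is horizontal—together with the net-squeezing step in (iv)$\Leftrightarrow$(v); both rely essentially on consistency $0\in\dom\mathcal{F}$ rather than on any single hard estimate. As an alternative route, since a strong Slater point for $\sigma(0)$ is simultaneously one for the linearized system $\widetilde{\sigma}(0)$ (as observed before the statement) and $\mathcal{F}(p)=\widetilde{\mathcal{F}}(\rho_{p})$, each of these equivalences could instead be transferred from its linear counterpart; I would nevertheless prefer the self-contained separation argument above, which makes the role of the standing hypothesis fully transparent.
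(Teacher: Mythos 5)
Your proposal is correct, but it is organized quite differently from the paper's proof, which is largely a proof by citation: the equivalences (i)$\Leftrightarrow$(ii)$\Leftrightarrow$(iv) are quoted from Theorem~5.1 of \cite{DGL08}, (ii)$\Leftrightarrow$(iii) is attributed to the Robinson--Ursescu theorem (as you also do), (iv)$\Rightarrow$(v) is the trivial inclusion $C(0)\subset H(0)$, and the only part the paper proves in detail is (v)$\Rightarrow$(iv). You instead give self-contained arguments for everything the paper outsources: the elementary two-sided argument for (i)$\Leftrightarrow$(ii) is fine, and your treatment of (i)$\Leftrightarrow$(iv) --- Fenchel--Young plus biconjugation $f_t^{**}=f_t$ for one direction, and $w^{\ast}$-separation in $X^{\ast}\times\mathbb{R}$ with the horizontal-normal case $s=0$ repaired by adding $\lambda$ times the inequality $\langle v,x_0\rangle-\alpha\le 0$ coming from a feasible point $x_0\in\mathcal{F}(0)$ --- is a genuine replacement for the cited result, and it makes visible exactly where the hypothesis $0\in\dom\mathcal{F}$ enters (it is indeed indispensable: for $f_n(x)=n-x$, $n\in\mathbb{N}$, on $X=\mathbb{R}$ one has $\cl^{\ast}H(0)=\{-1\}\times\mathbb{R}\not\ni(0,0)$ while SSC fails, the system being inconsistent). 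For the one step the paper actually proves, (v)$\Rightarrow$(iv) in contrapositive form, your argument is essentially the paper's: the paper writes a net of explicit convex combinations $\sum_t\alpha_{t\nu}(v^{\ast}_{t\nu},f^{\ast}_t(v^{\ast}_{t\nu})+\beta_{t\nu})\to(0,0)$ and uses feasibility of $\bar x\in\mathcal{F}(0)$ to force $\sum_t\alpha_{t\nu}\beta_{t\nu}\to 0$, whereas you package the same squeeze through the cleaner identity $H(0)=C(0)+\{0\}\times\mathbb{R}_{+}$ and the bound $\alpha\ge\langle v,x_0\rangle$ on $C(0)$. The trade-off is the usual one: the paper's proof is short and leans on \cite{DGL08}, while yours is longer but self-contained and exposes the role of consistency; both are valid.
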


\begin{proof}
The equivalence between (i), (ii),{\normalsize \ }and{\normalsize
\ }(iv) are established in Theorem~5.1 of \cite{DGL08}{\normalsize
. } The equivalence between (ii) and (iii) follows from the
classical Robinson-Ursescu theorem. Implication (iv)$~\Rightarrow
~$(v) is obvious by the inclusion $C\left( 0\right) \subset
H\left(0\right)$ due to \eqref{H} and \eqref{C}.

Let us now check that the inclusion {\normalsize $(0,0)\in\cl
\nolimits^{\ast}H\left( 0\right)$} implies the one in {\normalsize
$(0,0)\in\cl\nolimits^{\ast}C\left( 0\right)$}, which thus yields
(v)$~\Rightarrow ~$(iv)$.$ To proceed, assume that {\normalsize
$(0,0)\in\cl\nolimits^{\ast}H\left(0\right)$} and write
\begin{equation}
{\normalsize (0,0)=w^{\ast }}\text{{\normalsize -}}{\normalsize
\lim_{\nu \in \mathcal{N}}\left\{ \tsum\limits_{t\in T}\alpha
_{t\nu }\left( v_{t\nu }^{\ast },f_{t}^{\ast}\left( v_{t\nu
}^{\ast }\right) +\beta _{t\nu }\right) \right\} } \label{eq_lim}
\end{equation}
for some net indexed by a certain directed set $\mathcal{N}$ and
satisfying the conditions
\begin{eqnarray*}
\tsum\limits_{t\in T}\alpha _{t\nu}&=&1\;\text{ for all }\;\nu
\in\mathcal{N}
,\text{ } \\
\left(v_{t\nu }^{\ast},f_{t}^{\ast}\left( v_{t\nu
}^{\ast}\right)+\beta_{t\nu }\right) &\in &\mathrm{epi}~f_{t
}^{\ast }\;\text{ for all }\;t \text{ and all }\nu\in\mathcal{N}
\end{eqnarray*}
with $\alpha _{\nu }=\left( \alpha _{t\nu }\right) _{t\in T}$ and $\beta
_{\nu }=\left( \beta _{t\nu }\right) _{t\in T}$ belonging to $\mathbb{R}%
_{+}^{\left( T\right) }$. Take then any $\overline{x}\in
\mathcal{F}\left( 0\right)$ and observe from (\ref{eq_lim}) the
relationships
\begin{eqnarray*}
0 &=&{\normalsize \lim_{\nu \in \mathcal{N}}\left\{
\tsum\limits_{t\in T}\alpha _{t\nu }\left( \left\langle v_{t\nu
}^{\ast },\overline{x} \right\rangle -f_{t}^{\ast }\left( v_{t\nu
}^{\ast }\right) -\beta
_{t\nu }\right) \right\} } \\
&\leq &{\normalsize \lim_{\nu
\in\mathcal{N}}\left\{\tsum\limits_{t\in T}\alpha _{t\nu }\left(
f_{t}\left( \overline{x}\right) -\beta _{t\nu }\right) \right\}
}\leq 0
\end{eqnarray*}
held due to the feasibility of $\overline{x}$ and the fact that
$\beta_{t\nu }\geq 0\,$\ for all $t$ and all $\nu$. Hence we
arrive at the equality
\begin{equation*}
{\normalsize \lim_{\nu \in \mathcal{N}}\left\{ \tsum\limits_{t\in
T}\alpha _{t\nu }\beta _{t\nu }\right\}}=0
\end{equation*}
yielding in turn that
\begin{equation*}
{\normalsize (0,0)=w^{\ast}}\text{{\normalsize-}}{\normalsize
\lim_{\nu \in \mathcal{N}}\left\{ \tsum\limits_{t\in T}\alpha
_{t\nu }\big(v_{t\nu }^{\ast},f_{t}^{\ast}\left( v_{t\nu
}^{\ast}\right)\big)\right\}\in}\cl\nolimits^{\ast}\left(
\co\left(\bigcup\nolimits_{t\in T}\gph f_{t}{}^{\ast}\right)
\right),
\end{equation*}
which thus completes the proof of the lemma.
\end{proof}\vspace*{0.05in}

The following two technical statements are of their own interest
while playing an essential role in proving the main results
presented in the subsequent sections. We keep the convention
$0/0:=0$.

\begin{proposition}\label{Prop_distance} Suppose that $X$ is a Banach
space and that $g:X\rightarrow\overline{\mathbb{R}}$ is a proper
convex function such that there exists $\widehat{x}\in X$ with
$g(\widehat{x})<0$. If
\begin{equation}\label{S}
S:=\big\{y\in X\big|\;g(y)\le 0\big\},
\end{equation}
then for all $x\in X$ we have the equality
\begin{equation}
\mathrm{dist}\left(x;S\right)=\sup_{{(x^{\ast},\alpha
)\in{{\rm\small epi}}\,g^{\ast}}}{\frac{\left[\left\langle
x^{\ast},x\right\rangle-\alpha \right]_{+}}{\left\Vert
x^{\ast}\right\Vert}.}\label{distepig*}
\end{equation}
\end{proposition}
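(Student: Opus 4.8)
The plan is to establish \eqref{distepig*} by proving two inequalities, exploiting the convex-analytic description of $S$ and a strict-feasibility (Slater-type) argument supplied by the point $\widehat{x}$. First I would record the easy direction $\mathrm{dist}(x;S)\ge \mathrm{RHS}$. Pick any $y\in S$ and any $(x^{\ast},\alpha)\in\epi g^{\ast}$, so that $g^{\ast}(x^{\ast})\le\alpha$. From the Fenchel--Young inequality $\langle x^{\ast},y\rangle-g(y)\le g^{\ast}(x^{\ast})\le\alpha$ together with $g(y)\le 0$ we get $\langle x^{\ast},y\rangle\le\alpha$, whence $\langle x^{\ast},x\rangle-\alpha\le\langle x^{\ast},x-y\rangle\le\Vert x^{\ast}\Vert\,\Vert x-y\Vert$. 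Taking positive parts, dividing by $\Vert x^{\ast}\Vert$ (the case $x^{\ast}=0$ forces $\alpha\ge g^{\ast}(0)=-\inf g>0$ by Slater, so the quotient is $0$ under the $0/0$ convention), and then taking the infimum over $y\in S$ and the supremum over $(x^{\ast},\alpha)$ yields this bound.

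The reverse inequality $\mathrm{dist}(x;S)\le\mathrm{RHS}$ is the substantive part, and here the Slater point $\widehat{x}$ is essential. I would fix $x\notin S$ (the case $x\in S$ being trivial) and aim to show that the distance is controlled by the supremum. The natural route is to invoke an exact error-bound / Hoffman-type estimate for a single convex inequality: under the strict feasibility $g(\widehat{x})<0$, one has a global error bound of the form $\mathrm{dist}(x;S)\le \frac{[g(x)]_{+}}{\gamma(x)}$ where $\gamma(x)$ is a suitable ``slope'' quantity. More precisely, I expect to represent $\mathrm{dist}(x;S)$ via the gauge of the sublevel set and connect it to the supremum in \eqref{distepig*} by noting that for any subgradient data the quotient $\frac{[\langle x^{\ast},x\rangle-\alpha]_{+}}{\Vert x^{\ast}\Vert}$ with $(x^{\ast},\alpha)\in\epi g^{\ast}$ captures precisely the distance from $x$ to the halfspace $\{\langle x^{\ast},\cdot\rangle\le\alpha\}$, and $S$ is exactly the intersection of all such halfspaces because $\epi g^{\ast}$ encodes all affine minorants of $g$ and $S=\{g\le 0\}$.

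Concretely, the cleanest implementation is to recognize that $\sup_{(x^{\ast},\alpha)\in\epi g^{\ast}}[\langle x^{\ast},x\rangle-\alpha]_{+}/\Vert x^{\ast}\Vert$ equals the distance from $x$ to the intersection of the supporting halfspaces of $S$, and then to verify that this intersection is $S$ itself. The inclusion $S\subset\bigcap_{(x^{\ast},\alpha)}\{y:\langle x^{\ast},y\rangle\le\alpha\}$ is the forward estimate above; the opposite inclusion uses that every closed convex set equals the intersection of its supporting halfspaces, together with the identity $\epi g^{\ast}=\cl^{\ast}\cone\bigl(\epi g^{\ast}\bigr)$-style description from the extended Farkas Lemma~\ref{Lem_Farkas copy(1)} (applied to the single function $g$), which tells us that $\langle x^{\ast},y\rangle\le\alpha$ is a consequence of $g(y)\le 0$ precisely when $(x^{\ast},\alpha)$ lies in $\cl^{\ast}\cone\epi g^{\ast}$. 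Since $\epi g^{\ast}$ is already a cone-like object for the distance computation after normalization, the halfspace intersection collapses to $S$.

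The main obstacle will be the reverse inequality, specifically justifying that the supremum over $\epi g^{\ast}$ is \emph{attained in the limit} by normals that actually support $S$ at the projection of $x$, rather than only furnishing an upper bound on the consequence relation. In infinite dimensions the projection of $x$ onto $S$ need not exist, and the separating functional produced by Hahn--Banach separation of $x$ from $S$ lives in $X^{\ast}$ but may not be exactly a subgradient of $g$. The Slater condition $g(\widehat{x})<0$ is what rescues this: it guarantees the separating hyperplane between $x$ and the closed convex set $S$ can be scaled to arise from $\epi g^{\ast}$ with a definite positive slope $\Vert x^{\ast}\Vert$ bounded away from degeneracy, so that the distance-to-halfspace ratio is genuinely achieved in the supremum. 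I would therefore carry out a careful separation argument: take $\bar{y}$ an approximate projection of $x$ onto $S$, separate $x$ from the convex set $S$ by some $x^{\ast}$ with $\langle x^{\ast},x\rangle>\sup_{S}\langle x^{\ast},\cdot\rangle=:\alpha$, check via Farkas that $(x^{\ast},\alpha)\in\cl^{\ast}\cone\epi g^{\ast}$ so it (after normalization) contributes to the supremum, and verify the resulting quotient recovers $\mathrm{dist}(x;S)$ up to an arbitrarily small error.
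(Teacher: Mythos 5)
Your first inequality, $\mathrm{dist}(x;S)\ge\sup$, is correct as written: Fenchel--Young gives $\langle x^{\ast},y\rangle\le g(y)+g^{\ast}(x^{\ast})\le\alpha$ for $y\in S$ and $(x^{\ast},\alpha)\in\epi g^{\ast}$, and your treatment of $x^{\ast}=0$ via $g^{\ast}(0)=-\inf g>0$ is fine. The substantive direction, however, contains a genuine gap which your last paragraph acknowledges but does not close. First, the ``cleanest implementation'' is not something one can simply recognize: for a general family of halfspaces, the supremum of the distances to the halfspaces can be \emph{strictly smaller} than the distance to their intersection (in the Euclidean plane, $S=\{y_{1}\le 0\}\cap\{y_{2}\le 0\}$ and $x=(1,1)$ give supremum $1$ versus distance $\sqrt{2}$); the identity holds here only because $\epi g^{\ast}$ is closed under convex combinations (in the example, the midpoint pair $\left((\tfrac12,\tfrac12),0\right)\in\epi g^{\ast}$ restores the value $\sqrt{2}$), and proving that this convexity suffices \emph{is} the content of the proposition, not a reformulation of it. Second, the separation-plus-Farkas route does not repair this. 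Separation of the open ball of radius $d=\mathrm{dist}(x;S)$ from $S$ indeed yields $x^{\ast}\ne 0$ and $\alpha=\sup_{y\in S}\langle x^{\ast},y\rangle$ whose quotient equals $d$, and Lemma~\ref{Lem_Farkas copy(1)} places $(x^{\ast},\alpha)$ in $\cl^{\ast}\left(\cone\epi g^{\ast}\right)$. But the supremum you must bound below runs over $\epi g^{\ast}$, equivalently (by positive homogeneity of the quotient) over $\cone\epi g^{\ast}\setminus\{0\}$, and membership in the weak$^{\ast}$ \emph{closure} of that cone does not transfer the value $d$ back into the cone: the quotient has weak$^{\ast}$ continuous numerator but only weak$^{\ast}$ lower semicontinuous denominator, hence it is upper semicontinuous --- exactly the wrong direction. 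Along a net $\lambda_{\nu}(u_{\nu}^{\ast},\beta_{\nu})\to(x^{\ast},\alpha)$ with $(u_{\nu}^{\ast},\beta_{\nu})\in\epi g^{\ast}$, the norms $\lambda_{\nu}\Vert u_{\nu}^{\ast}\Vert$ may blow up, so the quotients of the approximating pairs can stay far below $d$ (even tend to $0$). The Slater point does bound the multipliers ($\lambda_{\nu}\left(\langle u_{\nu}^{\ast},\widehat{x}\rangle-\beta_{\nu}\right)\le\lambda_{\nu}g(\widehat{x})<0$ forces $\limsup_{\nu}\lambda_{\nu}<\infty$), and if $\liminf_{\nu}\lambda_{\nu}>0$ the weak$^{\ast}$ closedness of $\epi g^{\ast}$ would finish the argument; but the degenerate case $\lambda_{\nu}\to 0$, where $(x^{\ast},\alpha)$ lies only in the weak$^{\ast}$ horizon of the cone, is not excluded by anything you say, and there your argument produces no admissible pair at all.

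The paper's proof bypasses this difficulty entirely and is structurally different: it writes $\mathrm{dist}(x;S)$ as the optimal value of the convex program $\inf\{\Vert y-x\Vert:\ g(y)\le 0\}$, applies strong Lagrange duality (valid by the Slater condition, with attainment of the outer maximum), and then applies Fenchel duality to each inner problem $\inf_{y}\{\Vert y-x\Vert+\lambda g(y)\}$ with $\lambda>0$; the dual attainment there produces maximizing pairs lying \emph{inside} $\epi g^{\ast}$ (after the rescaling $x^{\ast}=y^{\ast}/\lambda$, $\alpha=\rho/\lambda$), so no passage through a weak$^{\ast}$ closure is ever needed. To salvage your approach you would have to show that the supremum of the quotient over $\cone\epi g^{\ast}$ coincides with its supremum over $\cl^{\ast}\left(\cone\epi g^{\ast}\right)$, and I do not see how to establish that without essentially rerunning the duality argument.
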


\begin{proof} Observe that the nonemptiness of the set $S$ defined
in \eqref{S} ensures that $\alpha \ge 0$ whenever
$(x^{\ast},\alpha )\in\epi g^{\ast }$, and so the possibility of
$x^{\ast }=0$ is not an obstacle in (\ref{distepig*}) under our
convention that $0/0=0$. Note also that
$\mathrm{dist}\left(x;S\right) $ is nothing else but the optimal
value in the convex optimization problem
\begin{equation*}
\mathrm{\inf }\text{ }\left\Vert y-x\right\Vert\;\text{ s.t.
}\;g(y)\leq 0.
\end{equation*}
Since for this problem the classical Slater condition is
satisfied, the \emph{strong Lagrange duality} holds (see, e.g.,
\cite[Theorem 2.9.3]{z}); namely,
\begin{eqnarray*}
\mathrm{dist}\left(x;S\right)&=&\max_{\lambda \ge 0}\inf_{y\in
X}\big\{\left\Vert y-x\right\Vert +\lambda g(y)\big\} \\
&=&\max\big\{\sup_{\lambda >0}\inf_{y\in X}\left\{\left\Vert
y-x\right\Vert +\lambda g(y)\big\} ,\ \inf_{y\in X}\left\Vert
y-x\right\Vert \right\} \\
&=&\max \left\{ \sup_{\lambda >0}\inf_{y\in X}\big\{\left\Vert
y-x\right\Vert +\lambda g(y)\big\},\ 0\right\}.
\end{eqnarray*}
Applying now  the Fenchel duality theorem to the inner infimum
problem for every fixed $\lambda>0$, which is possible due to the
obvious fulfillment of the Rockafellar regularity condition, we
get
\begin{equation*}
\inf_{y\in X}\big\{\left\Vert y-x\right\Vert +\lambda g(y)\big\}
=\max_{y^{\ast }\in X^{\ast}}\big\{-\left\Vert \cdot-x\right\Vert
^{\ast }(-y^{\ast})-(\lambda g)^{\ast}(y^{\ast})\big\}.
\end{equation*}
By employing next the well-known formula
\begin{equation*}
\left\Vert \cdot -x\right\Vert ^{\ast }(-y^{\ast})=\left\{
\begin{array}{ll}
\left\langle -y^{\ast},x\right\rangle & \text{if }\left\Vert
y^{\ast}\right\Vert \le 1, \\
\infty & \text{otherwise,}
\end{array}\right.
\end{equation*}
we arrive at the relationships
\begin{eqnarray*}
\inf_{y\in X}\big\{\left\Vert y-x\right\Vert +\lambda g(y)\big\}
&=&\max_{\left\Vert y^{\ast}\right\Vert\le 1}\big\{\left\langle
y^{\ast},x\right\rangle-(\lambda g)^{\ast}(y^{\ast})\big\} \\
&=&\max_{\substack{{y^{\ast}\in X}^{\ast},\ {\rho\in\mathbb{R}},
\\ \left\Vert y^{\ast}\right\Vert\le 1,\ {(\lambda g)^{\ast}(y^{\ast})\le
\rho}}}{\big\{\left\langle y^{\ast},x\right\rangle-\rho\big\}} \\
&=&\max_{\substack{{y^{\ast}\in X}^{\ast},\ {\rho\in\mathbb{R}},\\
\left\Vert y^{\ast}\right\Vert \leq 1,\ {\lambda g^{\ast}(y^{\ast
}/\lambda)\le\rho}}}{\big\{\left\langle y^{\ast},x\right\rangle
-\rho\big\}} \\
&=&\max_{\substack{{y^{\ast}\in X}^{\ast},\ {\rho\in
\mathbb{R}},\\ \left\Vert y^{\ast}\right\Vert\le 1,\ {(1/\lambda
)(y^{\ast },\rho)\in{{\rm\small epi}}\,g^{\ast}}}}{\big\{
\left\langle y^{\ast},x\right\rangle-\rho\big\}}.
\end{eqnarray*}
Thus defining $x^{\ast}:=(1/\lambda )y^{\ast}$ and
$\alpha:=(1/\lambda )\rho $ gives us
\begin{equation*}
\inf_{y\in X}\big\{\left\Vert y-x\right\Vert+\lambda g(y)\big\}
=\max_{\substack{ x{^{\ast}\in X}^{\ast},\ \alpha {\in\mathbb{R}},
\\ \left\Vert x^{\ast}\right\Vert \leq 1/\lambda ,\ {(x^{\ast},\alpha)
\in{{\rm\small epi}}\,g^{\ast}}}}{\lambda\big\{\left\langle
x^{\ast},x\right\rangle-\alpha\big\}}\quad\mbox{and}
\end{equation*}
\begin{eqnarray}
\mathrm{dist}\left(x;S\right) &=&\max \left\{\sup_{\substack{
\lambda
>0,\ x{^{\ast}\in X}^{\ast},\ \alpha {\in\mathbb{R}},
\\ \left\Vert x^{\ast}\right\Vert \leq 1/\lambda,\ {(x^{\ast},
\alpha)\in{{\rm\small epi}}\,g^{\ast}}}}{\lambda\big\{\left\langle
x^{\ast},x\right\rangle-\alpha\big\}
,\ 0}\right\}\label{uno} \\
&=&\sup_{\substack{\lambda >0,\ x{^{\ast}\in X}^{\ast},\ \alpha
{\in\mathbb{R}}, \\ \left\Vert x^{\ast}\right\Vert\le 1/\lambda ,\
{(x^{\ast},\alpha)\in{{\rm\small epi}}\,g^{\ast}}}}{\lambda
\lbrack \left\langle x^{\ast },x\right\rangle -\alpha ]}_{+}.
\notag
\end{eqnarray}
Again with $\lambda>0$ fixed, for $x^{\ast}=0$ we observe that
\begin{eqnarray*}
\max_{(0,\alpha )\in{{\rm\small epi}}\,g^{\ast}}\lambda\left\{
\left\langle 0,x\right\rangle -\alpha \right\} &=&\max_{g^{\ast
}(0)\le\alpha }\lambda
(\left\langle 0,x\right\rangle-\alpha ) \\
&=&\lambda\big(-g^{\ast}(0)\big)\\
&\le &\lambda \inf_{x\in X}g(x) \\
&\le &\lambda g(\widehat{x})<0.
\end{eqnarray*}
According to this, the second representation in (\ref{uno})
implies the equalities
\begin{eqnarray*}
\mathrm{dist}\left(x;S\right) &=&\sup_{\substack{\lambda>0,\
\left\Vert x^{\ast}\right\Vert \leq 1/\lambda ,\  \\ {(x^{\ast
},\alpha)\in{{\rm\small epi}}\,g^{\ast}}}}{\lambda \lbrack
\left\langle x^{\ast },x\right\rangle
-\alpha]}_{+} \\
&=&\max_{{(x^{\ast },\alpha )\in{{\rm\small epi}}\,g^{\ast
}}}\sup_{\lambda
>0,\ \left\Vert x^{\ast}\right\Vert\leq 1/\lambda}{\lambda
\lbrack
\left\langle x^{\ast},x\right\rangle-\alpha ]}_{+} \\
&=&\sup_{{(x^{\ast},\alpha )\in{{\rm\small
epi}}\,g^{\ast}}}{\frac{\left[ \left\langle x^{\ast
},x\right\rangle-\alpha \right]_{+}}{\left\Vert
x^{\ast}\right\Vert },}
\end{eqnarray*}
which complete the proof of the proposition.
\end{proof}

\begin{lemma}\label{Lem_distance} Assume that SSC is satisfied for the system
$\sigma\left(p\right)$ in \eqref{eq_conv_sys}. Then for any $x\in
X$ and any $p\in l_{\infty }\left( T\right) $ we have the
representation
\begin{equation}
\mathrm{dist}\big(x;\mathcal{F}(p)\big)=\sup_{_{\left( x^{\ast
},\alpha \right)\in{{\rm\small cl}}^{\ast}C\left(p\right)}}\frac{
\left[\left\langle x^{\ast},x\right\rangle-\alpha \right]_{+}}{
\left\Vert x^{\ast}\right\Vert}. \label{eq_distance_formula}
\end{equation}
If furthermore the space $X$ is reflexive, then
\begin{equation}
\mathrm{dist}\big(x;\mathcal{F}\left(p\right)\big) =\sup_{_{\left(
x^{\ast },\alpha \right)\in C\left(p\right) }}\frac{\left[ \left\langle
x^{\ast },x\right\rangle -\alpha \right] _{+}}{\left\Vert x^{\ast
}\right\Vert }.  \label{distC(p)}
\end{equation}
\end{lemma}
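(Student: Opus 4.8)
The plan is to collapse the infinite system into a single constraint and invoke Proposition~\ref{Prop_distance}. Set $g:=\sup_{t\in T}(f_{t}-p_{t})$, so that $\mathcal{F}(p)=\{x\in X\mid g(x)\le 0\}$; as a supremum of proper lsc convex functions $g$ is lsc and convex, and the SSC furnishes $\widehat{x}$ with $g(\widehat{x})=\sup_{t}(f_{t}(\widehat{x})-p_{t})<0$, which makes $g$ proper and supplies the Slater point required by Proposition~\ref{Prop_distance}. Applying that proposition to $g$ and $S=\mathcal{F}(p)$ yields
\[
\mathrm{dist}\big(x;\mathcal{F}(p)\big)=\sup_{(x^{\ast},\alpha)\in\epi g^{\ast}}\frac{[\langle x^{\ast},x\rangle-\alpha]_{+}}{\|x^{\ast}\|}.
\]

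The next step is to describe $\epi g^{\ast}$ in terms of the data. Writing $h_{t}:=f_{t}-p_{t}$, so that $(f_{t}-p_{t})^{\ast}=f_{t}^{\ast}+p_{t}$ and $\epi h_{t}^{\ast}=\epi f_{t}^{\ast}+(0,p_{t})$, the elementary identity $(\inf_{t}h_{t}^{\ast})^{\ast}=\sup_{t}h_{t}^{\ast\ast}=\sup_{t}h_{t}=g$ gives, upon taking one more conjugate, $g^{\ast}=(\inf_{t}h_{t}^{\ast})^{\ast\ast}$, i.e., $g^{\ast}$ is the $w^{\ast}$-lsc convex hull of $\inf_{t}h_{t}^{\ast}$. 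Passing to epigraphs, this means exactly $\epi g^{\ast}=\mathrm{cl}^{\ast}\co\big(\bigcup_{t}\epi h_{t}^{\ast}\big)=\mathrm{cl}^{\ast}H(p)$. I stress that this identity needs no constraint qualification; the SSC enters the argument only through Proposition~\ref{Prop_distance}.

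It then remains to replace the epigraphical set $H(p)$ by the graphical set $C(p)$ and, in the reflexive case, to discard the closure. For fixed $x^{\ast}$ the quotient is nonincreasing in $\alpha$, so the supremum over $\epi g^{\ast}$ is recovered from the graph, giving $\mathrm{dist}(x;\mathcal{F}(p))=\sup_{x^{\ast}\in\dom g^{\ast}}[\langle x^{\ast},x\rangle-g^{\ast}(x^{\ast})]_{+}/\|x^{\ast}\|$. Since $\gph h_{t}^{\ast}\subset\epi g^{\ast}$ and $\epi g^{\ast}$ is convex and $w^{\ast}$-closed, one has $C(p)\subset\mathrm{cl}^{\ast}C(p)\subset\epi g^{\ast}$, which bounds the $\mathrm{cl}^{\ast}C(p)$-supremum above by the distance. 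For the reverse inequality I would prove $\gph g^{\ast}\subset\mathrm{cl}^{\ast}C(p)$: given $x^{\ast}\in\dom g^{\ast}$, the point $(x^{\ast},g^{\ast}(x^{\ast}))\in\epi g^{\ast}=\mathrm{cl}^{\ast}\big(C(p)+\{0\}\times\mathbb{R}_{+}\big)$ is a $w^{\ast}$-limit of $(u_{\nu}^{\ast},\beta_{\nu})+(0,\gamma_{\nu})$ with $(u_{\nu}^{\ast},\beta_{\nu})\in C(p)$ and $\gamma_{\nu}\ge 0$; then $u_{\nu}^{\ast}\to x^{\ast}$, while the $w^{\ast}$-lower semicontinuity of $g^{\ast}$ together with $\beta_{\nu}\ge g^{\ast}(u_{\nu}^{\ast})$ forces $\beta_{\nu}\to g^{\ast}(x^{\ast})$, so $(x^{\ast},g^{\ast}(x^{\ast}))\in\mathrm{cl}^{\ast}C(p)$. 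This yields \eqref{eq_distance_formula}.

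Finally, for reflexive $X$ the dual $X^{\ast}$ is reflexive, so $w^{\ast}$ coincides with the weak topology and, by Mazur's theorem, $\mathrm{cl}^{\ast}C(p)$ equals the norm closure of the convex set $C(p)$. Approximating any $(x^{\ast},\alpha)\in\mathrm{cl}^{\ast}C(p)$ with $x^{\ast}\neq 0$ in norm by points of $C(p)$ makes both $\langle x^{\ast},x\rangle-\alpha$ and $\|x^{\ast}\|$ pass to the limit, so the corresponding quotient is recovered as a limit of quotients over $C(p)$; the points with $x^{\ast}=0$ satisfy $\alpha\ge g^{\ast}(0)=-\inf g>0$ and hence contribute $0$ under the convention $0/0:=0$. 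Thus the suprema over $C(p)$ and over $\mathrm{cl}^{\ast}C(p)$ coincide, which gives \eqref{distC(p)}. I expect the main obstacle to be precisely the transition from the epigraphical set $H(p)$ to the graphical set $C(p)$ and the subsequent removal of the $w^{\ast}$-closure: the former is handled by the monotonicity in $\alpha$ together with the lower-semicontinuity argument above, while the latter is exactly where reflexivity, through Mazur's theorem, becomes indispensable.
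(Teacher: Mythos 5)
Your proof is correct and follows essentially the same route as the paper's: reduce to $g=\sup_{t\in T}(f_t-p_t)$ and apply Proposition~\ref{Prop_distance}, identify $\epi g^{\ast}$ with $\cl^{\ast}H(p)$ (you re-derive the formula that the paper cites from \cite{GJL08}), pass from the epigraphical set $H(p)$ to the graphical set $C(p)$, and invoke Mazur's theorem in the reflexive case. The only differences are presentational: your lower-semicontinuity argument showing $\gph g^{\ast}\subset\cl^{\ast}C(p)$ spells out what the paper compresses into the stated identity $\cl^{\ast}H(p)=\left[\cl^{\ast}C(p)\right]+\mathbb{R}_{+}(0,1)$, and you handle the removal of the closure directly rather than by the paper's contradiction argument.
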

\begin{proof}
We can obviously write
\begin{equation*}
\mathcal{F}\left(p\right)=\big\{x\in X\big|\;g(x)\le 0\big\}\;\mbox{ with }\;g:=\sup_{t\in T}(f_{t}-p_{t}),
\end{equation*}
where the SSC is equivalent to the existence of $\widehat{x}\in X$ such that $g(\widehat{x})<0$.

Employing further \cite[formula (2.3)]{GJL08} gives us
\begin{equation*}
\epi g^{\ast}=\epi\left\{\sup_{t\in
T}(f_{t}-p_{t})\right\}^{\ast}=\cl^*\co
\left(\bigcup\limits_{t\in T}\epi(f_{t}-p_{t})^{\ast}\right)=
\cl^*H(p),
\end{equation*}
and thus (\ref{eq_distance_formula}) comes straightforwardly from (\ref{distepig*}) together with the fact
that $\cl^{\ast}H(p)=\left[\cl^{\ast}C(p)\right]+\mathbb{R}_{+}\left(0,1\right)$ with $0\in X^{\ast}$.

Consider now the case when the space $X$ is reflexive. Arguing by contradiction, assume that (\ref{distC(p)})
does not hold and then find a scalar $\beta$ such that
\begin{equation}
\sup_{_{\left( x^{\ast},\alpha\right)\in{{\rm\small
cl}}^{\ast}C\left( p\right)}}\frac{\left[\left\langle
x^{\ast},x\right\rangle-\alpha\right] _{+}}{\left\Vert
x^{\ast}\right\Vert}>\beta>\sup_{_{\left(x^{\ast},\alpha\right)\in
C\left(p\right)}} \frac{\left[\left\langle
x^{\ast},x\right\rangle-\alpha\right]_{+}}{\left\Vert
x^{\ast}\right\Vert}. \label{doble}
\end{equation}
Thus there exists a pair $\left( \overline{x}^{\ast},\overline{\alpha }\right)
\in\cl^{\ast}C\left( p\right)$, with $\overline{{x}}{^{\ast}\in X^{\ast}\diagdown\{0\}}$ and $\overline{{\alpha }}
{\in\mathbb{R}}$, satisfying the strict inequality
\begin{equation*}
\frac{\left[\left\langle\overline{x}^{\ast },x\right\rangle-
\overline{\alpha}\right]_{+}}{\left\Vert\overline{x}^{\ast}\right\Vert}>\beta.
\end{equation*}
Since $X$ is reflexive and the set $C\left(p\right)$ is convex, the classical Mazur theorem allows us
to replace the weak$^{\ast}$ closure of $C$ by its norm closure. Hence there is
a sequence $(x_{k}^{\ast},\alpha _{k})\in C\left( p\right),\ k=1,2,...,$
converging in norm to $\left(\overline{x}^{\ast},\overline{\alpha}\right)$ with
\begin{equation*}
\lim_{k\rightarrow \infty }\frac{\left[ \left\langle x_{k}^{\ast
},x\right\rangle -\alpha _{k}\right] _{+}}{\left\Vert x_{k}^{\ast
}\right\Vert }=\frac{\left[ \left\langle\overline{x}^{\ast},x\right\rangle
-\overline{\alpha }\right] _{+}}{\left\Vert \overline{x}^{\ast}\right\Vert}
>\beta.
\end{equation*}
Therefore we find a natural number $k_{0}$ for which
\begin{equation*}
\frac{\left[ \left\langle x_{k_{0}}^{\ast },x\right\rangle -\alpha _{k_{0}}%
\right] _{+}}{\left\Vert x_{k_{0}}^{\ast }\right\Vert }>\beta.
\end{equation*}
This clearly contradicts (\ref{doble}) and thus completes the proof of the lemma.
\end{proof}

\section{Qualitative Stability via Coderivatives}

In this section we consider the parametric convex system
\eqref{eq_conv_sys} in the general framework of Banach decision
spaces $X$. The main goals of this section are to establish
necessary and sufficient conditions for the Lipschitz-like
property of the solution map \eqref{eq_fsm} to \eqref{eq_conv_sys}
and to compute the exact Lipschitzian bound of \eqref{eq_fsm} in
the general Banach space setting. As mentioned in Section~1, our
approach to these quantitative stability issues relies on reducing
the convex infinite system $\sigma (p)$ in \eqref{eq_conv_sys} to
its linearization $\widetilde{\sigma}(\rho_{p})$ in
\eqref{q_lin_sys} and then employing the corresponding results of
\cite{CLMP09} derived for linear infinite systems. This is done on
the base of coderivative analysis.

We start with deriving an upper estimate of the exact Lipschitzian
bound for the solution map \eqref{eq_fsm} by using the
aforementioned approach.

\begin{lemma}
For any $x\in X$ and any $p\in l_{\infty }\left( T\right)$ the
following holds:
\begin{equation*}
\mathrm{dist}\left(p;\mathcal{F}^{-1}\left(
x\right)\right)\ge\mathrm{dist}\left(\rho_{p};\widetilde{\mathcal{F}}^{-1}\left(
x\right)\right).
\end{equation*}
\end{lemma}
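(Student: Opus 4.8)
The statement asserts that
$$\mathrm{dist}\big(p;\mathcal{F}^{-1}(x)\big)\ge\mathrm{dist}\big(\rho_{p};\widetilde{\mathcal{F}}^{-1}(x)\big)$$
for every $x\in X$ and $p\in l_\infty(T)$. Here $\mathcal{F}^{-1}(x)=\{q\in l_\infty(T)\mid x\in\mathcal{F}(q)\}$ and $\widetilde{\mathcal{F}}^{-1}(x)=\{\rho\in l_\infty(\widetilde T)\mid x\in\widetilde{\mathcal{F}}(\rho)\}$. The distances are measured in the respective supremum norms on $l_\infty(T)$ and $l_\infty(\widetilde T)$. The intuition is that the linearized family $\widetilde{\mathcal{F}}$ sees a richer parameter space, so it is "easier" to be feasible there, making the inverse image larger and the distance to it smaller.

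**The plan.** My approach is to construct, from any parameter $q\in\mathcal{F}^{-1}(x)$, an associated parameter $\rho_q\in\widetilde{\mathcal{F}}^{-1}(x)$ whose distance from $\rho_p$ is no larger than the distance of $q$ from $p$. Concretely, I would take an arbitrary $q\in\mathcal{F}^{-1}(x)$ and map it to $\rho_q\in l_\infty(\widetilde T)$ defined by $\rho_q(t,u^*):=q_t$, exactly as in the embedding \eqref{eq_equality_feasible}. First I would verify that $\rho_q\in\widetilde{\mathcal{F}}^{-1}(x)$: by the equivalence established around \eqref{eq_equality_feasible}, we have $\mathcal{F}(q)=\widetilde{\mathcal{F}}(\rho_q)$, so $x\in\mathcal{F}(q)$ forces $x\in\widetilde{\mathcal{F}}(\rho_q)$, i.e.\ $\rho_q\in\widetilde{\mathcal{F}}^{-1}(x)$. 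Second, I would use the norm identity already recorded in the excerpt, namely $\Vert q-p\Vert=\Vert\rho_{q-p}\Vert=\Vert\rho_q-\rho_p\Vert$ (the map $q\mapsto\rho_q$ is linear and isometric, since $\rho_q-\rho_p=\rho_{q-p}$ and the displayed norm computation gives $\Vert\rho_{q-p}\Vert=\sup_t|q_t-p_t|=\Vert q-p\Vert$).

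**Concluding the inequality.** Combining these two observations, for every $q\in\mathcal{F}^{-1}(x)$ the point $\rho_q$ lies in $\widetilde{\mathcal{F}}^{-1}(x)$ and satisfies
$$\mathrm{dist}\big(\rho_p;\widetilde{\mathcal{F}}^{-1}(x)\big)\le\Vert\rho_p-\rho_q\Vert=\Vert p-q\Vert.$$
Taking the infimum over all $q\in\mathcal{F}^{-1}(x)$ on the right-hand side yields
$$\mathrm{dist}\big(\rho_p;\widetilde{\mathcal{F}}^{-1}(x)\big)\le\inf_{q\in\mathcal{F}^{-1}(x)}\Vert p-q\Vert=\mathrm{dist}\big(p;\mathcal{F}^{-1}(x)\big),$$
which is the desired inequality. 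I would also handle the degenerate case $\mathcal{F}^{-1}(x)=\emptyset$ separately: there the right-hand distance is $+\infty$ by the convention $\inf\emptyset:=\infty$ recorded after \eqref{eq_quotient}, so the inequality holds trivially.

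**Expected difficulty.** The argument is essentially a one-line isometric embedding, so there is no deep obstacle; the only point requiring care is confirming that $q\mapsto\rho_q$ genuinely maps $\mathcal{F}^{-1}(x)$ \emph{into} (not necessarily onto) $\widetilde{\mathcal{F}}^{-1}(x)$, which is exactly why the inequality goes in one direction only. The reverse inequality would require every $\rho\in\widetilde{\mathcal{F}}^{-1}(x)$ to arise as some $\rho_q$, which is false because $\widetilde{\mathcal{F}}^{-1}(x)$ contains parameters depending genuinely on $u^*$; this dependence is precisely what enlarges the linearized inverse image and produces the stated strict-direction estimate.
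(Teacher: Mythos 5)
Your proposal is correct and is essentially the paper's own argument: both rest on the observation that $q\mapsto\rho_{q}$ is a linear isometry carrying $\mathcal{F}^{-1}(x)$ into $\widetilde{\mathcal{F}}^{-1}(x)$ (via $\mathcal{F}(q)=\widetilde{\mathcal{F}}(\rho_{q})$), so the infimum defining $\mathrm{dist}(p;\mathcal{F}^{-1}(x))$ dominates the one defining $\mathrm{dist}(\rho_{p};\widetilde{\mathcal{F}}^{-1}(x))$. The paper merely phrases this with a minimizing sequence $\{p_{r}\}\subset\mathcal{F}^{-1}(x)$ and the equivalence $p_{r}\in\mathcal{F}^{-1}(x)\Leftrightarrow\rho_{p_{r}}\in\widetilde{\mathcal{F}}^{-1}(x)$, while you take the pointwise infimum over all $q\in\mathcal{F}^{-1}(x)$; the content is identical.
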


\begin{proof}
First observe that $\widetilde{\mathcal{F}}^{-1}\left(x\right)
=\varnothing $ yields
$\mathcal{F}^{-1}\left(x\right)=\varnothing$. Consider further the
nontrivial case when both sets $\widetilde{\mathcal{F}}^{-1}
\left(x\right)$ and $\widetilde{\mathcal{F}}^{-1}\left( x\right)$
are nonempty. Thus we get for any sequence $\left\{
p_{r}\right\}_{r\in \mathbb{N}}\subset l_{\infty}\left(T\right)$
that
\begin{equation*}
\mbox{dist}\left(p;\mathcal{F}^{-1}\left(x\right)\right)
=\lim_{r\in \mathbb{N}}\left\Vert p-p_{r}\right\Vert=\lim_{r\in
\mathbb{N}}\left\Vert\rho_{p}-\rho _{p_{r}}\right\Vert\ge
\mbox{dist}\left(\rho_{p};\widetilde{\mathcal{F}}^{-1}\left(
x\right)\right).
\end{equation*}
To complete the proof, recall that $p_{r}\in
\mathcal{F}^{-1}\left(x\right) $ if and only if $\rho _{p_{r}}\in
\widetilde{\mathcal{F}}^{-1}\left(x\right)$.
\end{proof}\vspace*{0.05in}

From now on we consider the nominal parameter $\overline{p}=0$,
i.e, the zero function from $T$ to $\mathbb{R}$; the corresponding
function $\rho_{ \overline{p}}$ is also the zero function from
$\widetilde{T}$ to $\mathbb{R}.\mathbb{\ }$Both zero functions
will be denoted simply by $0$.

\begin{lemma}\label{lem7}
Let $\overline{x}\in \mathcal{F}\left( 0\right)$. Then we have the
upper estimate
\begin{equation*}
\lip\mathcal{F}\left(0,\overline{x}\right)\le\lip
\widetilde{\mathcal{F}}\left(0,\overline{x}\right).
\end{equation*}
\end{lemma}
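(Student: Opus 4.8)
The plan is to derive the bound $\lip\mathcal{F}(0,\ox)\le\lip\widetilde{\mathcal{F}}(0,\ox)$ directly from the quotient representation \eqref{eq_quotient} of the exact Lipschitzian bound, using the distance inequality furnished by the preceding lemma. Recall that \eqref{eq_quotient} expresses
\[
\lip\mathcal{F}(0,\ox)=\limsup_{(p,x)\rightarrow(0,\ox)}\frac{\mathrm{dist}\big(x;\mathcal{F}(p)\big)}{\mathrm{dist}\big(p;\mathcal{F}^{-1}(x)\big)},
\]
and similarly for $\widetilde{\mathcal{F}}$ with $(\rho,x)\to(0,\ox)$. The key structural facts are the embedding \eqref{eq_equality_feasible}, namely $\mathcal{F}(p)=\widetilde{\mathcal{F}}(\rho_p)$, together with the norm identity $\|p\|=\|\rho_p\|$ and the distance inequality $\mathrm{dist}(p;\mathcal{F}^{-1}(x))\ge\mathrm{dist}(\rho_p;\widetilde{\mathcal{F}}^{-1}(x))$ proved in the previous lemma.

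First I would fix a pair $(p,x)$ near $(0,\ox)$ and compare the two difference quotients. The numerators coincide outright: by \eqref{eq_equality_feasible} we have $\mathcal{F}(p)=\widetilde{\mathcal{F}}(\rho_p)$, hence $\mathrm{dist}(x;\mathcal{F}(p))=\mathrm{dist}(x;\widetilde{\mathcal{F}}(\rho_p))$. For the denominators, the preceding lemma gives $\mathrm{dist}(p;\mathcal{F}^{-1}(x))\ge\mathrm{dist}(\rho_p;\widetilde{\mathcal{F}}^{-1}(x))$. Since the denominator of the $\mathcal{F}$-quotient is at least as large, and the numerators agree, the $\mathcal{F}$-quotient is bounded above by the $\widetilde{\mathcal{F}}$-quotient evaluated at $(\rho_p,x)$ (invoking the convention $0/0:=0$ and that dividing by a larger nonnegative quantity does not increase the ratio). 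Thus
\[
\frac{\mathrm{dist}(x;\mathcal{F}(p))}{\mathrm{dist}(p;\mathcal{F}^{-1}(x))}\le\frac{\mathrm{dist}(x;\widetilde{\mathcal{F}}(\rho_p))}{\mathrm{dist}(\rho_p;\widetilde{\mathcal{F}}^{-1}(x))}.
\]

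To pass from this pointwise inequality to the inequality between the two $\limsup$ quantities, I would argue that as $(p,x)\to(0,\ox)$ in $l_\infty(T)\times X$, the associated pair $(\rho_p,x)$ converges to $(0,\ox)$ in $l_\infty(\widetilde{T})\times X$, because $\|\rho_p-\rho_{\op}\|=\|\rho_{p-\op}\|=\|p-\op\|$ by the norm identity, and $\rho_0=0$. Consequently every net realizing the supremum limit for $\mathcal{F}$ maps to an admissible net for the $\widetilde{\mathcal{F}}$-quotient converging to the same limit point $(0,\ox)$, so the $\limsup$ defining $\lip\mathcal{F}(0,\ox)$ is dominated by the $\limsup$ defining $\lip\widetilde{\mathcal{F}}(0,\ox)$. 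This yields the claimed estimate.

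The main subtlety I would watch for is the handling of the degenerate cases built into the $0/0:=0$ and $\infty$ conventions that accompany \eqref{eq_quotient}, rather than any deep analytic obstacle. Specifically, when $\mathcal{F}^{-1}(x)=\emptyset$ one has $\mathrm{dist}(p;\mathcal{F}^{-1}(x))=\infty$, forcing the $\mathcal{F}$-quotient to vanish, so no difficulty arises there; and one must ensure the denominator-comparison does not flip the inequality when either distance is zero. These are precisely the edge cases the previous lemma's proof already dispatched (the implication $\widetilde{\mathcal{F}}^{-1}(x)=\emptyset\Rightarrow\mathcal{F}^{-1}(x)=\emptyset$), so the comparison of quotients remains valid throughout, and the passage to the supremum limits completes the argument. \rule{0.5em}{0.5em}
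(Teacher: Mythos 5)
Your proposal is correct and follows exactly the paper's own argument: the paper's proof is a one-line remark that the estimate ``comes straightforwardly from the exact Lipschitzian bound representation (\ref{eq_quotient}) combined with the linearized relationship (\ref{eq_equality_feasible}) and the previous lemma,'' which is precisely the combination you spell out (equal numerators via $\mathcal{F}(p)=\widetilde{\mathcal{F}}(\rho_p)$, dominated denominators via the preceding lemma, and the norm identity $\|p\|=\|\rho_p\|$ to pass to the limsup). Your more detailed treatment of the degenerate $0/0$ and $\infty$ cases is a faithful elaboration of what the paper leaves implicit.
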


\begin{proof}
The aimed inequality comes straightforwardly from the exact
Lipschitzian bound representation (\ref{eq_quotient}) combined
with the linearized relationship (\ref{eq_equality_feasible}) and
the previous lemma.
\end{proof}\vspace*{0.05in}

The latter lemma and the results of \cite{CLMP09} for linear
infinite systems lead us to a constructive upper modulus estimate
for the original convex system.

\begin{theorem}\label{Th_1} Let $\overline{x}\in \mathcal{F}\left(
0\right)$. Assume that the SSC is satisfied for $\sigma\left(
0\right)$ and that the set $\bigcup_{t\in T}
\mathrm{dom}f_{t}^{\ast}$ is bounded in $X^*$. The following
assertions hold:

{\bf (i)} If $\overline{x}$ is a strong Slater point of $\sigma
\left( 0\right)$, then $\lip\mathcal{F}\left(0,\overline{x}\right)
=0$.

{\bf (ii)} If $\overline{x}$ is not a strong Slater point of
$\sigma \left( 0\right)$, then
\begin{equation}\label{ue}
\lip\mathcal{F}\left(0,\overline{x}\right)\le\lip
\widetilde{\mathcal{F}}\left( 0,\overline{x}\right)=\max\left\{
\|u^{\ast}\|^{-1}\big|\;\left(u^{\ast },\left\langle u^{\ast },
\overline{x}\right\rangle\right)\in\cl^{\ast}C\left(0\right)
\right\}.
\end{equation}
\end{theorem}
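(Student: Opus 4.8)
The plan is to leverage the machinery already assembled in the excerpt, since Lemma~\ref{lem7} has reduced the problem to computing $\lip\widetilde{\mathcal{F}}(0,\overline{x})$ for the linearized system $\widetilde{\sigma}(\rho)$. First I would dispose of part (i). If $\overline{x}$ is a strong Slater point, then $\sup_{t\in T}f_t(\overline{x})<0$; equivalently $g(\overline{x})<0$ for $g:=\sup_{t\in T}f_t$, so $\overline{x}$ lies in the interior of $\mathcal{F}(0)=\{x\mid g(x)\le 0\}$. A point in the interior of a feasible set depending Lipschitz-continuously on right-hand-side perturbations stays feasible under small perturbations of $p$, which forces $\mathrm{dist}(\overline{x};\mathcal{F}(p))$ to vanish for all $p$ near $0$; feeding this into the quotient representation \eqref{eq_quotient} yields $\lip\mathcal{F}(0,\overline{x})=0$. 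Concretely I would check via Lemma~\ref{Lem_distance} that $\mathrm{dist}(\overline{x};\mathcal{F}(p))=0$ for $\|p\|$ small, using that $g(\overline{x})<0$ keeps the supremum defining the distance nonpositive.

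For part (ii) the inequality $\lip\mathcal{F}(0,\overline{x})\le\lip\widetilde{\mathcal{F}}(0,\overline{x})$ is exactly Lemma~\ref{lem7}, so the real content is the \emph{equality}
\begin{equation*}
\lip\widetilde{\mathcal{F}}(0,\overline{x})=\max\big\{\|u^{\ast}\|^{-1}\;\big|\;(u^{\ast},\langle u^{\ast},\overline{x}\rangle)\in\cl^{\ast}C(0)\big\}.
\end{equation*}
This is where I would invoke the results of \cite{CLMP09} for linear infinite systems: the system $\widetilde{\sigma}(\rho)$ has coefficient vectors $u^{\ast}$ running over $\bigcup_{t\in T}\mathrm{dom}f_t^{\ast}$, which is bounded in $X^{\ast}$ by hypothesis, so the boundedness assumption of \cite{CLMP09} is met and its exact-bound formula applies verbatim. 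The point $(u^{\ast},\langle u^{\ast},\overline{x}\rangle)$ lying in $\cl^{\ast}C(0)$ is precisely the condition that the corresponding linearized constraint is \emph{active} at $\overline{x}$ (since $C(0)=\co\bigcup_t\gph(f_t)^{\ast}$ records graph rather than epigraph points), so the formula from \cite{CLMP09} specializes to a supremum of $\|u^{\ast}\|^{-1}$ over active coefficients. Because $\overline{x}$ is \emph{not} a strong Slater point, this active set is nonempty, so the expression is well-defined and positive.

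The main obstacle I anticipate is upgrading the supremum to an attained \emph{maximum}, i.e.\ justifying the $\max$ rather than $\sup$ in \eqref{ue}. This is where the boundedness of $\bigcup_{t\in T}\mathrm{dom}f_t^{\ast}$ does double duty: it not only validates the \cite{CLMP09} formula but also provides the compactness needed for attainment. I would argue that a maximizing sequence $(u_k^{\ast},\langle u_k^{\ast},\overline{x}\rangle)\in\cl^{\ast}C(0)$ has $\|u_k^{\ast}\|$ bounded away from $0$ (otherwise the quotient blows up, contradicting the finiteness of $\lip\widetilde{\mathcal{F}}$ that follows from the SSC via Lemma~\ref{Lemma1}); combined with the weak$^*$ compactness of bounded sets in $X^{\ast}$ and the weak$^*$ closedness of $\cl^{\ast}C(0)$, one extracts a weak$^*$-convergent subnet whose limit $(u^{\ast},\langle u^{\ast},\overline{x}\rangle)$ attains the bound. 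I would need care here because the norm $\|\cdot\|$ is only weak$^*$ lower semicontinuous, so passing to the limit in $\|u_k^{\ast}\|^{-1}$ requires that the numerator and the pairing $\langle u^{\ast},\overline{x}\rangle$ behave well under weak$^*$ limits; fortunately the affine constraint $\langle u^{\ast},\overline{x}\rangle$ is weak$^*$ continuous, and lower semicontinuity of the norm gives the correct inequality direction to confirm the supremum is achieved.
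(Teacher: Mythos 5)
Your handling of part (ii) is essentially the paper's own proof: the upper estimate is exactly Lemma~\ref{lem7}, the characteristic set of the linearized system \eqref{q_lin_sys} coincides with $C(0)$ from \eqref{C} (graphs, not epigraphs, as you note), and the equality $\lip\widetilde{\mathcal{F}}(0,\ox)=\max\{\|u^*\|^{-1}\;|\;(u^*,\la u^*,\ox\ra)\in\cl^{\ast}C(0)\}$ is imported from the exact-bound theorem of \cite{CLMP09}, which is applicable precisely because the coefficients of $\widetilde{\sigma}$ run over the bounded set $\bigcup_{t\in T}\dom f_t^{\ast}$. Your extra subnet argument for attainment of the maximum is correct (bounded maximizing sequence, Alaoglu, weak$^{\ast}$ continuity of $u^{\ast}\mapsto\la u^{\ast},\ox\ra$, weak$^{\ast}$ lower semicontinuity of the norm, and Lemma~\ref{Lemma1}(v) to exclude the limit $u^{\ast}=0$), but it is redundant, since the cited theorem already delivers a max; your gloss that membership of $(u^{\ast},\la u^{\ast},\ox\ra)$ in $\cl^{\ast}C(0)$ means ``activity'' of a single constraint is loose (see Proposition~\ref{pr} and the first example of Section~5 for what is actually true), though nothing in your argument rests on it.

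Part (i), however, contains a genuine gap. You claim that $g(\ox)<0$ for $g:=\sup_{t\in T}f_t$ puts $\ox$ in the interior of $\mathcal{F}(0)$. For a proper lsc convex $g$ this is false: lower semicontinuity bounds $g$ from below near $\ox$, not from above. Take $T$ a singleton and $f(x)=\delta_{\{0\}}(x)-1$ on $X=\mathbb{R}$; then $\ox=0$ is a strong Slater point while $\mathcal{F}(0)=\{0\}$ has empty interior. What rescues the step is exactly the boundedness hypothesis you never invoke in part (i): since $f_t=f_t^{\ast\ast}$ is a supremum of affine functions whose slopes lie in $\bigcup_{t\in T}\dom f_t^{\ast}\subset M\mathbb{B}$, each $f_t$ and hence $g$ is finite and $M$-Lipschitz, so $g(x)\le g(\ox)+M\|x-\ox\|$. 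A second, related defect: the representation \eqref{eq_quotient} takes a $\limsup$ over \emph{all} pairs $(p,x)\to(0,\ox)$, so establishing $\mathrm{dist}(\ox;\mathcal{F}(p))=0$ for small $\|p\|$, as you propose, is not enough; you need $\mathrm{dist}(x;\mathcal{F}(p))=0$ for all $(p,x)$ near $(0,\ox)$. Both defects are repaired by the Lipschitz estimate above, which gives $x\in\mathcal{F}(p)$ whenever $M\|x-\ox\|+\|p\|\le-g(\ox)$; alternatively, part (i) follows with no extra work the way the paper obtains it, namely from Lemma~\ref{lem7} combined with the strong-Slater-point case of the same theorem of \cite{CLMP09} applied to the linearization, whose coefficients are bounded by hypothesis.
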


\begin{proof}
First of all, recall from Section~1 that the SSC property for the
convex system $\sigma(p)$ is equivalent to the SSC condition for
the linear one $\widetilde{\sigma }(\rho_{p})$. Thus the equality
in \eqref{ue} follows from \cite[Theorem~4.6]{CLMP09} under the
boundedness assumptions made in the theorem. The upper estimate in
\eqref{ue} is the content of Lemma~\ref{lem7}, and thus the proof
of the theorem is complete.
\end{proof}\vspace*{0.05in}

In what follows we show that the upper estimate in \eqref{ue}
holds in fact as {\em equality} under the assumptions of
Theorem~\ref{Th_1}. Furthermore, the boundedness assumption of
this theorem (which may be violated even in simple examples) can
be avoided in the case of reflexive decision spaces
$X$.\vspace*{0.05in}

To justify the equality in \eqref{ue}, we proceed by using {\em
coderivative analysis}. For each $t\in T$, consider a convex
function $h_{t}:l_{\infty }\left(T\right)\times X\rightarrow
\overline{\mathbb{R}}$ defined by
\begin{equation}\label{h0}
h_{t}\left(p,x\right):=\left\langle-\delta _{t},p\right\rangle
+f_{t}\left( x\right),
\end{equation}
where $\delta_{t}$ denotes the classical \emph{Dirac measure} at
$t\in T$, i.e.,
\begin{equation*}
\left\langle\delta_{t},p\right\rangle:=p_{t}\;\text{ for every
}\;p=(p_{t})_{t\in T}\in l_{\infty }\left( T\right).
\end{equation*}
It is easy to see that
\begin{equation}\label{h1}
\dom h_{t}^{\ast}=\left\{-\delta _{t}\right\}\times\dom
f_{t}^{\ast}\quad\mbox{and}\quad\gph
h_{t}^{\ast}=\left\{-\delta_{t}\right\}\times\gph f_{t}^{\ast}.
\end{equation}

The next result computes the coderivative of the solution map
\eqref{eq_fsm} to the original infinite convex system
\eqref{eq_conv_sys} in terms of its initial data. It is important
for the subsequent qualitative stability analysis conducted in
this section as well as for deriving optimality conditions in
Section~6.

\begin{proposition}\label{Prop_charact_coderiv} Let $\overline{x}\in\mathcal{F}
\left(0\right)$ for the solution map \eqref{eq_fsm} to the convex
system \eqref{eq_conv_sys}. Then $p^{\ast}\in
D^{\ast}\mathcal{F}\left(0,\overline{x}\right)\left(x^{\ast}\right)$
if and only if
\begin{equation}
\big(p^{\ast},-x^{\ast},-\left\langle x^{\ast},\overline{x}
\right\rangle\big)\in\mathrm{cl}^{\ast}\mathrm{cone}\left(\bigcup_{t\in
T} \big[\left\{-\delta_{t}\right\}\times\mathrm{gph}\,
f_{t}^{\ast}\big]\right).\label{eq_charact_coderiv}
\end{equation}
\end{proposition}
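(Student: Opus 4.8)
The plan is to realize the graph of $\mathcal{F}$ as the feasible set of a single convex infinite system in the product space and then invoke the extended Farkas' Lemma. Set $Z:=l_{\infty}(T)\times X$ and observe from \eqref{h0} that, since $h_{t}(p,x)=-p_{t}+f_{t}(x)\le 0$ is equivalent to $f_{t}(x)\le p_{t}$,
\[
\gph\mathcal{F}=\big\{(p,x)\in Z\ \big|\ h_{t}(p,x)\le 0,\ t\in T\big\}.
\]
A direct computation gives $h_{t}^{\ast}(-\delta_{t},u^{\ast})=f_{t}^{\ast}(u^{\ast})$, which together with \eqref{h1} yields $\mathrm{gph}\,h_{t}^{\ast}=\{-\delta_{t}\}\times\mathrm{gph}\,f_{t}^{\ast}$ and, analogously, $\mathrm{epi}\,h_{t}^{\ast}=\{-\delta_{t}\}\times\mathrm{epi}\,f_{t}^{\ast}$. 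Because $\gph\mathcal{F}$ is convex, the basic normal cone in \eqref{cod} coincides with the normal cone of convex analysis, so $p^{\ast}\in D^{\ast}\mathcal{F}(0,\overline{x})(x^{\ast})$ means exactly that $(p^{\ast},-x^{\ast})\in N((0,\overline{x});\gph\mathcal{F})$, i.e., that the scalar inequality $\langle p^{\ast},p\rangle+\langle -x^{\ast},x\rangle\le -\langle x^{\ast},\overline{x}\rangle$ is a consequence of $\{h_{t}\le 0,\ t\in T\}$; note that the right-hand side here is precisely the value of the functional $(p^{\ast},-x^{\ast})$ at the feasible point $(0,\overline{x})$.

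Applying the extended Farkas' Lemma (Lemma~\ref{Lem_Farkas copy(1)}) to this convex system in $Z$ with nominal parameter $0$, the consequence relation is equivalent to
\[
\big((p^{\ast},-x^{\ast}),-\langle x^{\ast},\overline{x}\rangle\big)\in\mathrm{cl}^{\ast}\mathrm{cone}\Big(\bigcup_{t\in T}\mathrm{epi}\,h_{t}^{\ast}\Big).
\]
The implication $(\Leftarrow)$ is then immediate: since $\mathrm{gph}\,h_{t}^{\ast}\subset\mathrm{epi}\,h_{t}^{\ast}$, condition \eqref{eq_charact_coderiv}—which, after substituting $\mathrm{gph}\,h_{t}^{\ast}=\{-\delta_{t}\}\times\mathrm{gph}\,f_{t}^{\ast}$, says exactly that $(p^{\ast},-x^{\ast},-\langle x^{\ast},\overline{x}\rangle)$ lies in the closed convex cone generated by the $\mathrm{gph}\,h_{t}^{\ast}$—forces membership in the larger epigraphical cone, hence the consequence relation and thus $p^{\ast}\in D^{\ast}\mathcal{F}(0,\overline{x})(x^{\ast})$.

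The substantive direction is $(\Rightarrow)$, where I must pass from the epigraphical cone back to the graphical one; this is the main obstacle, and I would resolve it by a complementary-slackness argument patterned on the proof of Lemma~\ref{Lemma1}. Writing the membership above as a $w^{\ast}$-limit
\[
\big((p^{\ast},-x^{\ast}),-\langle x^{\ast},\overline{x}\rangle\big)=w^{\ast}\text{-}\lim_{\nu}\sum_{t\in T}\lambda_{t\nu}\big((-\delta_{t},u_{t\nu}^{\ast}),f_{t}^{\ast}(u_{t\nu}^{\ast})+\beta_{t\nu}\big)
\]
with $\lambda_{\nu},\beta_{\nu}\in\mathbb{R}_{+}^{(T)}$, I would pair this net against the fixed element $((0,\overline{x}),-1)\in Z\times\mathbb{R}$. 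Since the pairing of the limit with that element equals $-\langle x^{\ast},\overline{x}\rangle+\langle x^{\ast},\overline{x}\rangle=0$, I obtain
\[
\lim_{\nu}\sum_{t\in T}\lambda_{t\nu}\big[\langle u_{t\nu}^{\ast},\overline{x}\rangle-f_{t}^{\ast}(u_{t\nu}^{\ast})-\beta_{t\nu}\big]=0.
\]
Each bracketed term is nonpositive, because $\langle u_{t\nu}^{\ast},\overline{x}\rangle-f_{t}^{\ast}(u_{t\nu}^{\ast})\le f_{t}(\overline{x})\le 0$ by the Young--Fenchel inequality and the feasibility $\overline{x}\in\mathcal{F}(0)$, while $\beta_{t\nu}\ge 0$. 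A squeeze then forces both nonnegative quantities $\sum_{t}\lambda_{t\nu}\beta_{t\nu}$ and $\sum_{t}\lambda_{t\nu}[f_{t}^{\ast}(u_{t\nu}^{\ast})-\langle u_{t\nu}^{\ast},\overline{x}\rangle]$ to tend to $0$.

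With $\sum_{t}\lambda_{t\nu}\beta_{t\nu}\to 0$ in hand, I conclude by deleting the epigraphical slacks: the graphical net $\sum_{t}\lambda_{t\nu}((-\delta_{t},u_{t\nu}^{\ast}),f_{t}^{\ast}(u_{t\nu}^{\ast}))$ differs from the epigraphical one only by the term $(0,0,\sum_{t}\lambda_{t\nu}\beta_{t\nu})\to(0,0,0)$, so it has the same $w^{\ast}$-limit $(p^{\ast},-x^{\ast},-\langle x^{\ast},\overline{x}\rangle)$; being contained in $\mathrm{cone}(\bigcup_{t}[\{-\delta_{t}\}\times\mathrm{gph}\,f_{t}^{\ast}])$, its limit lies in the $w^{\ast}$-closure, which is exactly \eqref{eq_charact_coderiv}. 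The only routine point to verify in order to apply Farkas' Lemma is the consistency of the nominal system $\{h_{t}\le 0\}$, and this holds because $(0,\overline{x})\in\gph\mathcal{F}$.
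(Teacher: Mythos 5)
Your proof is correct and takes essentially the same approach as the paper's: both reduce the coderivative inclusion to a consequence relation for the convex system $\{h_t(p,x)\le 0,\ t\in T\}$ via the convexity of $\gph\mathcal{F}$, apply the extended Farkas' Lemma (Lemma~\ref{Lem_Farkas copy(1)}), and then evaluate both sides against $(0,\overline{x},-1)$ to replace the epigraphs of $h_t^{\ast}$ by their graphs. The complementary-slackness argument you spell out (nonpositivity of each term via Fenchel--Young and feasibility, then the squeeze forcing $\sum_t\lambda_{t\nu}\beta_{t\nu}\to 0$) is precisely the step the paper compresses into ``it is easy to see.''
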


\begin{proof} Due to the obvious convexity of the graphical set
$\gph{\cal F}$ for \eqref{eq_fsm}, the cone
$N((0,\overline{x});\gph\mathcal{F})$ reduces to the classical
normal cone of convex analysis. Thus we have that $p^{\ast}\in
D^{\ast}\mathcal{F}\left(0,\overline{x}\right)\left(x^{\ast}\right)$
if and only if $\left\langle p^{\ast},p\right\rangle-\left\langle
x^{\ast},x\right\rangle\le-\left\langle
x^{\ast},\overline{x}\right\rangle$ by considering the convex
system
\begin{equation*}
\big\{h_{t}\left( p,x\right)\le 0,\;t\in T\big\}
\end{equation*}
with $h_t$ defined in \eqref{h0}. It now follows from the extended
Farkas Lemma formulated in Lemma~\ref{Lem_Farkas copy(1)} that
\begin{equation}
\big(p^{\ast},-x^{\ast},-\left\langle x^{\ast},\overline{x}
\right\rangle\big)\in\cl^*\left(\cone\dbigcup\nolimits_{t\in
T}\mathrm{epi}\,h_{t}{}^{\ast}\right).\label{Eq_charact_coderiv2}
\end{equation}

It is easy to see by applying both sides of
(\ref{Eq_charact_coderiv2}) on $\left(0,\overline{x},-1\right)$
that the epigraph in in \eqref{Eq_charact_coderiv2} can be
replaced by the graph of $h^*_t$ therein. Thus representation
(\ref{eq_charact_coderiv}) follows from that in
(\ref{Eq_charact_coderiv2}) and the expression of the graph of
$h^*_t$ given in \eqref{h1}.
\end{proof}\vspace*{0.05in}

The next important result provides a complete computation of the
coderivative norm, defined in \eqref{eq_norm coderiv}, via the
characteristic set $C(0)$ from \eqref{C}.

\begin{theorem}\label{Th_norm} Let $\overline{x}\in\mathcal{F}\left(0\right)$.
Assume that the SSC is satisfied for $\sigma\left(0\right)$ and
that the set $\bigcup_{t\in T}\mathrm{dom}\,f_{t}^{\ast}$ is
bounded in $X^*$. The following assertions hold:

{\bf (i)} If $\overline{x}$ is a strong Slater point of $\sigma
\left( 0\right)$, then $\left\Vert D^{\ast}\mathcal{F}\left(
0,\overline{x}\right)\right\Vert=0$.

{\bf (ii)} If $\overline{x}$ is not a strong Slater point of
$\sigma\left(0\right)$, then
\begin{equation*}
\left\Vert D^{\ast}\mathcal{F}\left(0,\overline{x}\right)
\right\Vert=\max\left\{\left\Vert u^{\ast}\right\Vert^{-1}\Big|\;
\big(u^{\ast},\left\langle
u^{\ast},\overline{x}\right\rangle\big)\in\cl^*C\left(0\right)\right\}>0.
\end{equation*}
\end{theorem}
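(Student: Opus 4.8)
The plan is to compute the coderivative norm directly from the characterization in Proposition~\ref{Prop_charact_coderiv}, exploiting the explicit description of the dual $l_{\infty}(T)^{\ast}=ba(T)$. Every element of the pre-closure cone $\cone\big(\bigcup_{t\in T}[\{-\delta_{t}\}\times\gph f_{t}^{\ast}]\big)$ has the form $\big(-\sum_{t}\lambda_{t}\delta_{t},\ \sum_{t}\lambda_{t}u_{t}^{\ast},\ \sum_{t}\lambda_{t}f_{t}^{\ast}(u_{t}^{\ast})\big)$ with $\lambda\in\tR$ and $u_{t}^{\ast}\in\dom f_{t}^{\ast}$, and since $\sum_{t}\lambda_{t}\delta_{t}$ is a nonnegative finitely supported measure its total variation equals $\sum_{t}\lambda_{t}$. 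Thus, writing a generic coderivative element $p^{\ast}\in D^{\ast}\mathcal{F}(0,\ox)(x^{\ast})$ as a $w^{\ast}$-limit of such cone elements along a net indexed by some $\mathcal{N}$, with masses $s_{\nu}:=\sum_{t}\lambda_{t\nu}$, the whole computation reduces to comparing $\lim s_{\nu}$ with $\|p^{\ast}\|$, where $\|p^{\ast}\|\le\liminf_{\nu}s_{\nu}$ by $w^{\ast}$-lower semicontinuity of the dual norm.

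I would first dispose of assertion (i). Combining the Fenchel--Young inequality $f_{t}^{\ast}(u_{t\nu}^{\ast})\ge\langle u_{t\nu}^{\ast},\ox\rangle-f_{t}(\ox)$ with the strong Slater gap $\varepsilon:=-\sup_{t}f_{t}(\ox)>0$ and summing against $\lambda_{t\nu}$ yields $\sum_{t}\lambda_{t\nu}f_{t}^{\ast}(u_{t\nu}^{\ast})\ge\langle-x_{\nu}^{\ast},\ox\rangle+\varepsilon s_{\nu}$. Passing to the limit in the second and third coordinates, which converge to $-x^{\ast}$ and $-\langle x^{\ast},\ox\rangle$ respectively, forces $\varepsilon\limsup_{\nu}s_{\nu}\le0$, whence $s_{\nu}\to0$ and $p^{\ast}=0$. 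This gives $\|D^{\ast}\mathcal{F}(0,\ox)\|=0$.

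For the upper estimate in (ii), given any $p^{\ast}\in D^{\ast}\mathcal{F}(0,\ox)(x^{\ast})$ with $\|x^{\ast}\|\le1$ I would pass to a subnet along which $s_{\nu}\to s$; boundedness of $s_{\nu}$ is guaranteed because $s_{\nu}\to\infty$ would drive the rescaled coordinates $\frac{1}{s_{\nu}}(\sum_{t}\lambda_{t\nu}u_{t\nu}^{\ast},\sum_{t}\lambda_{t\nu}f_{t}^{\ast}(u_{t\nu}^{\ast}))$ to $(0,0)\in\cl^{*}C(0)$, contradicting the SSC via Lemma~\ref{Lemma1}(v). If $s=0$ then $p^{\ast}=0$; otherwise the same rescaling produces convex combinations from $C(0)$ whose $w^{\ast}$-limit is $(u^{\ast},\langle u^{\ast},\ox\rangle)\in\cl^{*}C(0)$ with $u^{\ast}=-x^{\ast}/s$, so that $\|u^{\ast}\|^{-1}=s/\|x^{\ast}\|\ge s\ge\|p^{\ast}\|$. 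Hence $\|p^{\ast}\|$ is bounded above by the claimed maximum.

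The delicate part is the matching lower estimate, which also yields attainment of the supremum. Using that $\bigcup_{t}\dom f_{t}^{\ast}$ is bounded, the $w^{\ast}$-closed set $A:=\{u^{\ast}\mid(u^{\ast},\langle u^{\ast},\ox\rangle)\in\cl^{*}C(0)\}$ is bounded, hence $w^{\ast}$-compact, while the SSC (Lemma~\ref{Lemma1}(v)) keeps it away from $0$; thus $u^{\ast}\mapsto\|u^{\ast}\|$ attains a positive minimum on $A$ at some $u_{0}^{\ast}$, realizing the maximum $\|u_{0}^{\ast}\|^{-1}$. Writing $(u_{0}^{\ast},\langle u_{0}^{\ast},\ox\rangle)$ as a $w^{\ast}$-limit of convex combinations from $C(0)$ and scaling by $1/\|u_{0}^{\ast}\|$, I obtain cone elements whose measure parts are nonnegative with constant total mass $1/\|u_{0}^{\ast}\|$; a Banach--Alaoglu subnet of these measures converges $w^{\ast}$ to some $-p^{\ast}$, giving $p^{\ast}\in D^{\ast}\mathcal{F}(0,\ox)(x^{\ast})$ for $x^{\ast}:=u_{0}^{\ast}/\|u_{0}^{\ast}\|$, which has norm one. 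The decisive point, where naive $w^{\ast}$-lower semicontinuity would only return an inequality, is that mass cannot escape: the limit $-p^{\ast}$ is nonnegative (positivity being $w^{\ast}$-closed) and testing against the constant function $\mathbf{1}\in l_{\infty}(T)$ gives $\langle-p^{\ast},\mathbf{1}\rangle=1/\|u_{0}^{\ast}\|$, so its total variation equals exactly $\|u_{0}^{\ast}\|^{-1}$. This delivers $\|D^{\ast}\mathcal{F}(0,\ox)\|\ge\|u_{0}^{\ast}\|^{-1}$ and closes the proof; the main obstacle throughout is precisely this control of total variation under weak$^{\ast}$ limits of finitely additive measures in $ba(T)$.
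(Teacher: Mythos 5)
Your route is self-contained and genuinely different from the paper's, which proves this theorem simply by transferring \cite[Theorem~3.5]{CLMP09} through the linearization and the equivalences of Lemma~\ref{Lemma1}; your toolkit (finitely supported Dirac combinations, Alaoglu compactness, evaluation at the constant-one function to keep total variation from collapsing in the weak$^{\ast}$ limit, and Lemma~\ref{Lemma1}(v) to rule out degeneracy) is essentially the one the paper itself deploys later in the full proof of Theorem~\ref{Th_without boundedness}. Within that framework, part (i) is correct, and the upper estimate in part (ii) is correct up to two small repairs: the characterization of Proposition~\ref{Prop_charact_coderiv} forces $x^{\ast}=-u_{0}^{\ast}/\Vert u_{0}^{\ast}\Vert$ rather than $+u_{0}^{\ast}/\Vert u_{0}^{\ast}\Vert$ (harmless, since only $\Vert x^{\ast}\Vert=1$ is used), and the case $x^{\ast}=0$ with $s>0$ must be excluded explicitly, since it would give $(0,0)\in\mathrm{cl}^{\ast}C(0)$ and contradict Lemma~\ref{Lemma1}(v).

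The genuine gap is in the lower estimate: you never prove that the set $A=\{u^{\ast}\in X^{\ast}\mid(u^{\ast},\langle u^{\ast},\overline{x}\rangle)\in\mathrm{cl}^{\ast}C(0)\}$ is nonempty. Your $u_{0}^{\ast}$ is obtained as a minimizer of the weak$^{\ast}$ lower semicontinuous norm over the weak$^{\ast}$-compact set $A$, but a minimizer exists only if $A\neq\emptyset$, and nonemptiness is exactly what makes the maximum in assertion (ii) well defined and strictly positive --- it is part of the claim, not a side issue. Symptomatically, your argument for (ii) never invokes the hypothesis that $\overline{x}$ is \emph{not} a strong Slater point; that hypothesis is needed precisely here. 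A correct filling: since $\overline{x}\in\mathcal{F}(0)$ is feasible but not a strong Slater point, $\sup_{t\in T}f_{t}(\overline{x})=0$, so one can choose indices $t_{k}$ with $f_{t_{k}}(\overline{x})\geq-1/k$ and, by biconjugation $f_{t_{k}}^{\ast\ast}=f_{t_{k}}$, points $v_{k}^{\ast}\in\dom f_{t_{k}}^{\ast}$ with $0\leq f_{t_{k}}^{\ast}(v_{k}^{\ast})-\langle v_{k}^{\ast},\overline{x}\rangle\leq 2/k$; the boundedness of $\bigcup_{t\in T}\dom f_{t}^{\ast}$ and Alaoglu's theorem then yield a weak$^{\ast}$-cluster point $u^{\ast}$ of $(v_{k}^{\ast})$ satisfying $(u^{\ast},\langle u^{\ast},\overline{x}\rangle)\in\mathrm{cl}^{\ast}C(0)$, so $A\neq\emptyset$. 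That this step cannot be waved away, and that it is where the boundedness assumption is indispensable (you use boundedness only for compactness of $A$), is shown by the paper's own example in Section~5 of the system $\{tx\leq 1/t+p_{t},\ t\in[1,\infty)\}$ with $\overline{x}=0$: there the SSC holds and $\overline{x}$ is feasible but not a strong Slater point, yet $A=\emptyset$.
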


\begin{proof} It follows the lines in the proof of \cite[Theorem~3.5]{CLMP09}
with using the equivalent descriptions of the strong Slater
condition for the convex inequality system \eqref{eq_conv_sys} via
the characteristic set $C(0)$ obtained in Lemma~\ref{Lemma1}.
\end{proof}\vspace*{0.05in}

Now we are ready to establish the main result of this section
containing the coderivative characterization of the Lipschitz-like
property of the solution map \eqref{eq_fsm} with the precise
computation of the exact Lipschitzian bound.

\begin{theorem}\label{Th_lip} Let $\overline{x}\in\mathcal{F}\left(0\right)$
for the solution map \eqref{eq_fsm} to the convex
inequality system $\sigma(p)$ in \eqref{eq_conv_sys} with an
arbitrary Banach decision space $X$. Then ${\cal F}$ is
Lipschitz-like around $(0,\ox)$ if and only if
\begin{equation}\label{cod-cr1}
D^*{\cal F}(0,\ox)(0)=\{0\}.
\end{equation}
If furthermore the SSC is satisfied for $\sigma\left(0\right)$ and
the set $\bigcup_{t\in T}\mathrm{dom}\,f_{t}^{\ast}$ is bounded in
$X^*$, then the following hold:

{\bf (i)} $\lip{\cal F}(0,\ox)=0$ provided that $\ox$ is a strong
Slater point of $\sigma(0)$;

{\bf (ii)} otherwise we have
\begin{equation}
\mathrm{lip}\,\mathcal{F}\left(0,\overline{x}\right)=\max \left\{
\left\Vert
u^{\ast}\right\Vert^{-1}\Big|\;\big(u^{\ast},\left\langle
u^{\ast},\overline{x}\right\rangle\big)\in \mathrm{cl}^{\ast
}C\left(0\right)\right\}>0.\label{eq_lip}
\end{equation}
\end{theorem}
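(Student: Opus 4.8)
The plan is to prove the two assertions separately. The coderivative criterion \eqref{cod-cr1} holds in an arbitrary Banach space $X$ with no extra hypotheses, so I would establish it first, and then obtain the exact-bound formula by sandwiching $\lip\mathcal{F}(0,\ox)$ between the coderivative norm computed in Theorem~\ref{Th_norm} and the upper estimate of Theorem~\ref{Th_1}. Throughout I use that $\gph\mathcal{F}$ is convex, so that, as in the proof of Proposition~\ref{Prop_charact_coderiv}, the cone $N((0,\ox);\gph\mathcal{F})$ is the normal cone of convex analysis and $D^*\mathcal{F}(0,\ox)$ is generated by it.

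For the criterion I would first record the equivalence of the Lipschitz-like property of $\mathcal{F}$ at $(0,\ox)$ with the SSC. Setting $p=0$ in \eqref{eq_Lips} gives $\ox\in\mathcal{F}(0)\cap V\subset\mathcal{F}(q)+\ell\|q\|\mathbb{B}$ for all $q$ near $0$, so $\mathcal{F}(q)\neq\emptyset$ near $0$ and hence $0\in{\rm int}(\dom\mathcal{F})$; conversely $0\in{\rm int}(\dom\mathcal{F})$ returns the Lipschitz-like property through Lemma~\ref{Lemma1}. Thus $\mathcal{F}$ is Lipschitz-like at $(0,\ox)$ if and only if the SSC holds. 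The ``only if'' part of \eqref{cod-cr1} is then immediate: the Lipschitz-like property forces $\lip\mathcal{F}(0,\ox)<\infty$, and since the general lower estimate $\|D^*\mathcal{F}(0,\ox)\|\le\lip\mathcal{F}(0,\ox)$ is valid in any Banach space, while $D^*\mathcal{F}(0,\ox)(0)$ is a cone by positive homogeneity, any $p^*\in D^*\mathcal{F}(0,\ox)(0)$ obeys $t\|p^*\|\le\|D^*\mathcal{F}(0,\ox)\|<\infty$ for all $t>0$, whence $p^*=0$.

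For the ``if'' part I argue by contraposition. If $\mathcal{F}$ is not Lipschitz-like at $(0,\ox)$, then the SSC fails, and the equivalence of (i) and (v) in Lemma~\ref{Lemma1} yields $(0,0)\in\cl^*C(0)$; that is, there is a net $\sum_{t\in T}\alpha_{t\nu}(v^*_{t\nu},f_t^*(v^*_{t\nu}))$ with $\sum_t\alpha_{t\nu}=1$, $\alpha_\nu\in\R_+^{(T)}$ and $v_{t\nu}^*\in\dom f_t^*$, converging weak$^*$ to $(0,0)$ in $X^*\times\R$. Lifting each term by the corresponding Dirac measure gives elements $\sum_t\alpha_{t\nu}(-\delta_t,v^*_{t\nu},f_t^*(v^*_{t\nu}))$ of $\cone(\bigcup_t[\{-\delta_t\}\times\gph f_t^*])$ whose last two coordinates still tend to $(0,0)$, while the first coordinates $-\mu_\nu:=-\sum_t\alpha_{t\nu}\delta_t$ lie on the unit sphere of $ba(T)=l_\infty(T)^*$ (total variation $\sum_t\alpha_{t\nu}=1$). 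By Banach--Alaoglu a weak$^*$-convergent subnet of $\{-\mu_\nu\}$ has a limit $p^*$, and testing against the constant function $\mathbf 1\in l_\infty(T)$ gives $\langle p^*,\mathbf 1\rangle=\lim_\nu(-\sum_t\alpha_{t\nu})=-1$, so $p^*\neq 0$. Along this subnet $(p^*,0,0)\in\cl^*\cone(\bigcup_t[\{-\delta_t\}\times\gph f_t^*])$, which by Proposition~\ref{Prop_charact_coderiv} means $p^*\in D^*\mathcal{F}(0,\ox)(0)\setminus\{0\}$, so \eqref{cod-cr1} fails. This nonvanishing of the weak$^*$ limit is the one genuinely delicate point; the normalization $\|{-\mu_\nu}\|=1$ together with the test against $\mathbf 1$ is exactly what secures it, since weak$^*$ convergence alone need not preserve norm.

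Finally, for the exact bound assume the SSC and the boundedness of $\bigcup_t\dom f_t^*$. Part (i) is immediate from Theorem~\ref{Th_1}(i). For (ii) I would combine three facts: the upper estimate $\lip\mathcal{F}(0,\ox)\le\lip\widetilde{\mathcal{F}}(0,\ox)=\max\{\|u^*\|^{-1}\mid(u^*,\langle u^*,\ox\rangle)\in\cl^*C(0)\}$ from Theorem~\ref{Th_1}(ii); the identity $\|D^*\mathcal{F}(0,\ox)\|=\max\{\|u^*\|^{-1}\mid(u^*,\langle u^*,\ox\rangle)\in\cl^*C(0)\}$ from Theorem~\ref{Th_norm}(ii); and the lower estimate $\|D^*\mathcal{F}(0,\ox)\|\le\lip\mathcal{F}(0,\ox)$. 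Chaining them gives $\max\le\|D^*\mathcal{F}(0,\ox)\|\le\lip\mathcal{F}(0,\ox)\le\lip\widetilde{\mathcal{F}}(0,\ox)=\max$, forcing equality throughout and proving \eqref{eq_lip}; positivity of the maximum is already recorded in Theorem~\ref{Th_norm}(ii). The only ingredient beyond the quoted theorems is the general-Banach lower estimate $\|D^*\mathcal{F}\|\le\lip\mathcal{F}$, whose use is legitimate here because the convexity of $\gph\mathcal{F}$ makes the coderivative a single convex-normal-cone construction.
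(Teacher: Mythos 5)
Your proposal is correct, and its overall architecture coincides with the paper's: the exact-bound formula \eqref{eq_lip} is obtained by exactly the same sandwich, chaining the upper estimate of Theorem~\ref{Th_1}(ii), the coderivative-norm computation of Theorem~\ref{Th_norm}(ii), and the general Banach-space lower estimate $\left\Vert D^{\ast}\mathcal{F}(0,\ox)\right\Vert\le\lip\mathcal{F}(0,\ox)$, for which the paper quotes \cite[Theorem~1.44]{mor06a}; your positive-homogeneity argument for the ``only if'' half of \eqref{cod-cr1} is likewise just an unpacking of that same cited result. Where you genuinely diverge is the ``if'' half of the criterion: the paper disposes of it with a one-line reference (it ``follows the lines'' of the proof of \cite[Theorem~4.1]{CLMP09}), whereas you give a self-contained contrapositive argument working directly on the convex system --- failure of the Lipschitz-like property is converted into failure of the SSC via Lemma~\ref{Lemma1}, then into $(0,0)\in\cl^{\ast}C(0)$, and finally into a nonzero element of $D^{\ast}\mathcal{F}(0,\ox)(0)$ by lifting the approximating net with Dirac measures, extracting a weak$^{\ast}$-convergent subnet by Alaoglu, and testing against the function $e\equiv 1$ to keep the limit from vanishing. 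That normalization trick (unit total variation plus pairing with $e$) is precisely the device the paper itself employs later, in the proof of Theorem~\ref{Th_without boundedness}, to bound the coderivative norm from below, so your reconstruction is very much in the spirit of the paper. What your route buys is independence from the external reference \cite{CLMP09} and, as a by-product, the explicit chain of equivalences Lipschitz-like $\Leftrightarrow$ SSC $\Leftrightarrow$ \eqref{cod-cr1} for the convex system; what the paper's route buys is brevity, since it reuses the linear-system proof verbatim through the linearization correspondence.
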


\begin{proof} The ``only if" part in the coderivative criterion \eqref{cod-cr1}
is a consequence of \cite[Theorem~1.44]{mor06a} established for
general set-valued mappings of closed graph between Banach spaces.
The proof of the ``if" part in \eqref{cod-cr1} follows the lines
in the proof of \cite[Theorem~4.1]{CLMP09}.

The equality $\lip{\cal F}(0,\ox)=0$ for the exact Lipschitzian
bound in case (i) can be checked directly from the definitions
while it also follows by combining assertion (i) of
Theorem~\ref{Th_1} and assertion (i) of Theorem~\ref{Th_norm}.

It remains to justify equality \eqref{eq_lip} in the case when
$\ox$ is not a strong Slater point of $\sigma(0)$. Indeed, the
upper estimate for $\lip{\cal F}(0,\ox)$ follows from assertion
(ii) of Theorem~\ref{Th_1} and computing the coderivative norm in
assertion (ii) of Theorem~\ref{Th_norm} under the assumptions
made. The lower bound estimate
\begin{equation*}
\lip{\cal F}(0,\ox)\ge\|D^*{\cal F}(0,\ox)\|
\end{equation*}
is proved in \cite[Theorem~1.44]{mor06a} for general set-valued
mappings between Banach spaces. This completes the proof of the
theorem.
\end{proof}\vspace*{0.05in}

\begin{remark}\label{bc}
{\rm For the Lipschitzian modulus results obtained in
Theorem~\ref{Th_1} and Theorem~\ref{Th_lip} we imposed the {\em
boundedness} assumption on the set $\bigcup_{t\in
T}\mathrm{dom}\,f_{t}^{\ast}$ in the convex infinite system
\eqref{eq_conv_sys}. This corresponds to the boundedness on the
coefficient set $\{a^*_t\;|\;t\in T\}$ in the case of parametric
linear infinite systems $\{\la a^*_t,x\ra\le b_t+p_t\}$. While the
latter assumption does not look restrictive in the linear
framework, it may be too strong in the convex setting under
consideration, being violated even in some simple examples as in
the case of the following single constraint involving
one-dimensional decision and parameter variables:
\begin{equation}\label{ex}
x^2\le p\;\mbox{ for }\;x,p\in\mathbb{R}.
\end{equation}
Note that the linearized system \eqref{q_lin_sys} associated with
\eqref{ex} reads as follows:
\begin{equation*}
ux\le p+\frac{u^2}{4},\quad u\in\mathbb{R}.
\end{equation*}
In the next section we show that the aforementioned coefficient
boundedness assumption for linear systems and the corresponding
boundedness assumption on the set $\bigcup_{t\in
T}\mathrm{dom}\,f_{t}^{\ast}$ in the convex framework can be
dropped in the case of reflexive Banach spaces $X$ of decision
variables.}
\end{remark}

\begin{remark}\label{ioffe}
{\rm After the publication of \cite{CLMP09}, Alex Ioffe drew our
attention to the possible connections of some of the results
therein with those obtained in \cite{is} for general set-valued
mappings of convex graph. Examining this approach, we were able to
check, in particular, that \cite[Corollary~4.7]{CLMP09} on the
computing the exact Lipschitzian bound of linear infinite systems
via the coderivative norm under the coefficient boundedness can be
obtained by applying Theorem~3 and Proposition~5 from \cite{is}.
However, our proofs are far from being straightforward.}
\end{remark}

\section{Enhanced Stability Results in Reflexive Spaces}

In this section we primarily deal with the linear infinite system
\begin{equation}
\sigma(p):=\big\{\left\langle a_{t}^{\ast},x\right\rangle\le
b_{t}+p_{t},\ t\in T\big\}, \label{eq_linear system}
\end{equation}
where $a_{t}^{\ast}\in X^{\ast}$ and $b_{t}\in\mathbb{R}$ are
fixed for each $t$ from an arbitrary index sets $T$. Due to the
linearization approach developed above, the results obtained below
for linear systems can be translated to convex infinite systems of
type \eqref{eq_conv_sys}.

Note that in the linear case \eqref{eq_linear system} the
characteristic set \eqref{C} takes the form
\begin{equation}\label{C1}
C\left( p\right)=\mathrm{co}\big\{\left(
a_{t}^{\ast},b_{t}+p_{t}\right)\big|\;t\in T\big\} .
\end{equation}
This is our setting in \cite{CLMP09}, where the coefficient set
$\left\{a_{t}^{\ast}\;|\;t\in T\right\}\subset X^*$ is assumed to
be bounded while computing the coderivative norm $\left\Vert
D^{\ast}\mathcal{F}\left(0,\overline{x}\right)\right\Vert$ in
\cite[Theorem~3.5]{CLMP09} and the exact Lipschitzian bound $\lip
\mathcal{F}\left(0,\overline{x}\right)$ in
\cite[Theorem~4.5]{CLMP09} for the solution map ${\cal F}$ to
\eqref{eq_linear system}. In the case when $X$ is reflexive, we
are going to remove now the coefficient boundedness assumption
from both referred theorems, which implies that the boundedness of
the set $\cup_{t\in T}\mathrm{dom}f_{t}^{\ast}$ can also be
removed as an assumption throughout Section~4 when $X$ is
reflexive.

First we observe that the boundedness of the coefficients
$\left\{a_{t}^{\ast }\;|\;t\in T\right\}$ yields that only
$\varepsilon $-active indices are relevant in \eqref{C1} with
respect to the set of elements in the form $\left(u^{\ast
},\left\langle u^{\ast},\overline{x}\right\rangle\right)$
belonging to $\mathrm{cl}^{\ast}C\left(0\right)$, which from now
on is written as $\left\{ \left( \overline{x},-1\right)\right\}
^{\bot}\cap\mathrm{cl}^{\ast}C\left( 0\right)$. Given
$\overline{x}\in\mathcal{F}\left(0\right)$ and $\varepsilon\ge 0$,
we use the notation
\begin{equation*}
T_{\varepsilon }\left(\overline{x}\right):=\big\{t\in T\big|\;
\left\langle a_{t}^{\ast},\overline{x}\right\rangle\ge
b_{t}-\varepsilon\big\}
\end{equation*}
for the set of $\varepsilon $\emph{-active indices}. Let us make
the above statement precise.

\begin{proposition}\label{pr}
Assume that the coefficient set $\left\{a_{t}^{\ast}\;|\;t\in
T\right\}$ is bounded in $X^{\ast}$. Then given
$\overline{x}\in\mathcal{F}\left(0\right)$, we have the
representation
\begin{equation}
\big\{\left(\overline{x},-1\right)\big\}^{\bot}\cap\cl^{\ast}C\left(0\right)=
\bigcap_{\varepsilon>0}\cl^{\ast}\co\big\{\left(a_{t}^{\ast},
b_{t}\right)\big|\;t\in T_{\varepsilon}\left(\overline{x}\right)\big\}.
\label{eq_intersect}
\end{equation}
\end{proposition}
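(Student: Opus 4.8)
The plan is to prove the set equality \eqref{eq_intersect} by establishing the two inclusions separately, the containment of the left-hand side in the right-hand side being the nontrivial one where the coefficient boundedness is essential. Throughout I identify $\{(\overline{x},-1)\}^{\bot}$ with the $w^*$-closed hyperplane of pairs $(x^*,\alpha)\in X^*\times\mathbb{R}$ satisfying $\langle x^*,\overline{x}\rangle=\alpha$, and I repeatedly use that the functional $(x^*,\alpha)\mapsto\langle x^*,\overline{x}\rangle-\alpha$ is $w^*$-continuous, since $(\overline{x},-1)$ lies in the predual $X\times\mathbb{R}$.

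For the inclusion ``$\supseteq$'' (which needs no boundedness), I fix $(x^*,\alpha)$ in the right-hand intersection and any $\varepsilon>0$. Since $T_\varepsilon(\overline{x})\subset T$, the set $\co\{(a^*_t,b_t)\mid t\in T_\varepsilon(\overline{x})\}$ is contained in $C(0)$, so its $w^*$-closure lies in $\cl^{\ast}C(0)$; this yields the membership in $\cl^{\ast}C(0)$. For the hyperplane condition, note that for $t\in T_\varepsilon(\overline{x})$ the feasibility of $\overline{x}$ and the definition of the $\varepsilon$-active set give $-\varepsilon\le\langle a^*_t,\overline{x}\rangle-b_t\le 0$; hence every convex combination of the $(a^*_t,b_t)$ over $T_\varepsilon(\overline{x})$ pairs with $(\overline{x},-1)$ to a value in $[-\varepsilon,0]$, and by $w^*$-continuity the same holds for $(x^*,\alpha)$. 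Letting $\varepsilon\downarrow 0$ forces $\langle x^*,\overline{x}\rangle-\alpha=0$, so $(x^*,\alpha)\in\{(\overline{x},-1)\}^{\bot}$.

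For the reverse inclusion ``$\subseteq$'', I take $(x^*,\alpha)\in\{(\overline{x},-1)\}^{\bot}\cap\cl^{\ast}C(0)$ and fix $\varepsilon>0$, aiming to show $(x^*,\alpha)\in\cl^{\ast}\co\{(a^*_t,b_t)\mid t\in T_\varepsilon(\overline{x})\}$. I write $(x^*,\alpha)$ as the $w^*$-limit of a net $\sum_{t}\lambda_{t\nu}(a^*_t,b_t)$ of (finitely supported) convex combinations with $\lambda_{t\nu}\ge 0$ and $\sum_t\lambda_{t\nu}=1$. Pairing with $(\overline{x},-1)$ and using $(x^*,\alpha)\in\{(\overline{x},-1)\}^{\bot}$, the nonnegative quantities $c_t:=b_t-\langle a^*_t,\overline{x}\rangle$ satisfy $\sum_t\lambda_{t\nu}c_t\to 0$; since $c_t>\varepsilon$ whenever $t\notin T_\varepsilon(\overline{x})$, the total non-active weight $\mu_\nu:=\sum_{t\notin T_\varepsilon(\overline{x})}\lambda_{t\nu}$ tends to $0$, and likewise $\sum_{t\notin T_\varepsilon(\overline{x})}\lambda_{t\nu}c_t\to 0$.

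The crux, and the step I expect to be the main obstacle, is to show that the non-active part $Q_\nu:=\sum_{t\notin T_\varepsilon(\overline{x})}\lambda_{t\nu}(a^*_t,b_t)$ converges to $0$ in the $w^*$ topology; this is precisely where the boundedness $M:=\sup_{t}\|a^*_t\|<\infty$ enters and where it is needed to control the possibly unbounded right-hand sides $b_t$. Indeed, the first component of $Q_\nu$ has norm at most $\mu_\nu M\to 0$, while for the second component I decompose $b_t=c_t+\langle a^*_t,\overline{x}\rangle$, so that $\sum_{t\notin T_\varepsilon(\overline{x})}\lambda_{t\nu}b_t=\sum_{t\notin T_\varepsilon(\overline{x})}\lambda_{t\nu}c_t+\sum_{t\notin T_\varepsilon(\overline{x})}\lambda_{t\nu}\langle a^*_t,\overline{x}\rangle$, where the first sum tends to $0$ and the second is bounded in absolute value by $\mu_\nu M\|\overline{x}\|\to 0$. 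Consequently the active part $P_\nu:=\sum_{t\in T_\varepsilon(\overline{x})}\lambda_{t\nu}(a^*_t,b_t)=\sum_t\lambda_{t\nu}(a^*_t,b_t)-Q_\nu$ converges to $(x^*,\alpha)$ in $w^*$. Finally, renormalizing $P_\nu$ by the active weight $1-\mu_\nu\to 1$ (for $\nu$ large) produces genuine convex combinations over $T_\varepsilon(\overline{x})$ that still $w^*$-converge to $(x^*,\alpha)$, placing $(x^*,\alpha)$ in $\cl^{\ast}\co\{(a^*_t,b_t)\mid t\in T_\varepsilon(\overline{x})\}$. Since $\varepsilon>0$ was arbitrary, the desired inclusion follows and the proof is complete.
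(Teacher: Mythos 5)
Your proof is correct. The paper itself gives no self-contained argument for this proposition---it simply defers to Step~1 in the proof of \cite[Theorem~1]{CGP08}---and your argument (write an element of $\{(\overline{x},-1)\}^{\bot}\cap\cl^{\ast}C(0)$ as a $w^*$-limit of a net of convex combinations, split each combination into its $\varepsilon$-active and inactive parts, use $\sup_{t}\|a_{t}^{\ast}\|<\infty$ together with $\sum_{t}\lambda_{t\nu}c_{t}\to 0$ to show the inactive part vanishes in norm, then renormalize the active part) is essentially the standard argument that citation points to, with the easy reverse inclusion handled by $w^*$-continuity of the pairing with $(\overline{x},-1)$ exactly as one would expect.
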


\begin{proof}
It follows the lines of justifying Step~1 in the proof of \cite[
Theorem~1]{CGP08}. Note that both sets in \eqref{eq_intersect} are
nonempty if and only if $\overline{x}$ is not a strong Slater
point for $\sigma\left(0\right)$. Note also that the fulfillment
of the SSC for $\sigma\left(0\right)$ in \eqref{eq_linear system}
is not required for the fulfillment of \eqref{eq_intersect}.
\end{proof}\vspace*{0.05in}

Observe that in the continuous case considered in \cite{CDLP05}
(where $T$ is assumed to be a compact Hausdorff space,
$X=\mathbb{R}^{n}$, and the mapping
$t\mapsto\left(a^*_{t},b_{t}\right)$ is continuous on $T$)
representation (\ref{eq_intersect}) reads as
\begin{equation*}
\big\{\left(\overline{x},-1\right)\big\}^{\bot}\cap C\left(0\right)
=\co\big\{\left(a^*_{t},b_{t}\right)\big|\;\ t\in T_{0}\left(\overline{x}\right)\big\}.
\end{equation*}

The following example shows that the statement of
Proposition~\ref{pr} is {\em no longer valid} without the
boundedness assumption on $\left\{a_{t}^{\ast}\;|\;t\in T\right\}$
and that in (\ref{eq_lip}) the set $\cl^{\ast}C\left(0\right)$
cannot be replaced by
$\mathrm{cl}^{\ast}\co\left\{\left(a_{t}^{\ast}, b_{t}\right)\mid
t\in T_{\varepsilon}\left(\overline{x}\right)\right\}$ for some
\emph{small} $\varepsilon>0$; i.e., it is not sufficient to
consider just $\varepsilon$-active constraints. However, the exact
bound formula (\ref{eq_lip}) {\em remains true} in this example,
with the replacement of ``$\max$" by ``$\sup$" therein.

\begin{example}
\emph{Consider the countable linear system in }$\mathbb{R}^{2}$:
\begin{equation*}
\sigma\left(p\right)=\left\{\begin{array}{ll}
\left(-1\right)^{t}tx_{1}\le 1+p_{t}, & t=1,2,..., \\
x_{1}+x_{2}\le 0+p_{0}, & t=0
\end{array}
\right\}.
\end{equation*}
\emph{The reader can easily check that}
\begin{equation*}
\mathrm{co}\big\{\left(a_{t}^{\ast},b_{t}\right)\big|\;\ t\in T_{\varepsilon
}\left( \overline{x}\right)\big\}=\big\{\left(1,1,0\right)\big\}\;\mbox{ and}
\end{equation*}
\begin{equation*}
\big\{\left(\overline{x},-1\right)\big\}^{\bot}\cap\cl
^{\ast}C\left(0\right)=\big\{\left(\alpha,1,0\right),\;\alpha\in
\mathbb{R}\big\}
\end{equation*}
\emph{ for }$\overline{x}=0_{2}$\emph{\ and
}$0\le\varepsilon<1$.\emph{ Moreover}
\begin{equation*}
\mathcal{F}\left(p\right)=\left\{0\right\}\times\left(-\infty,p_{0}
\right]\;\text{\emph{\ whenever }}\;\left\Vert p\right\Vert\le 1,
\end{equation*}
\emph{which easily implies that }
$\lip\mathcal{F}\left(0,\overline{x}\right)=1$, \emph{\ and hence
the exact bound formula (\ref{eq_lip}) holds in this example.
Observe however that for }$0<\varepsilon<1$ \emph{we have}
\begin{equation*}
\max\left\{\left\Vert
u^{\ast}\right\Vert^{-1}\big|\;\left(u^{\ast},\left\langle
u^{\ast},\overline{x}
\right\rangle\right)\in\mathrm{cl}^{\ast}\mathrm{co}\big\{\left(a_{t}^
{\ast},b_{t}\right)\big|\;t\in T_{\varepsilon
}\left(\overline{x}\right)\big\}\right\}=\frac{1}{\sqrt{2}},
\end{equation*}
{\rm which shows that $T(\ox)$ cannot be replaced by $T_\ve(\ox)$ in \eqref{eq_lip}.}
\end{example}

As we mentioned above, it is clear that
$\left\{\left(\overline{x},-1\right) \right\}^{\bot
}\cap\mathrm{cl}^{\ast}C\left(0\right)=\varnothing$ when
$\overline{x}$ is a strong Slater point for $\sigma
\left(0\right)$. The following example (where the SSC is satisfied
for $\sigma\left(0\right)$) shows that the set
$\left\{\left(\overline{x},-1\right)\right\}^{\bot}\cap
\mathrm{cl}^{\ast}C\left(0\right)$ may be empty when
$\overline{x}\in\mathcal{F}\left(0\right)$ is not a strong Slater
point for $\sigma\left(0\right)$. According to
Theorem~\ref{Th_lip}, this cannot be the case when the coefficient
set $\left\{a_{t}^{\ast}\;|\;t\in T\right\}$ is bounded. Observe
however that in this example we have
$\lip\mathcal{F}\left(0,\overline{x}\right)=0$, and thus
(\ref{eq_lip}) still holds under the convention that $\sup
\varnothing:=0$.

\begin{example}
\emph{Consider the infinite linear system in }$\mathbb{R}$:
\begin{equation*}
\sigma\left(p\right)=\Big\{tx\le\frac{1}{t}+p_{t},~t\in\left[
1,\infty\right)\Big\}
\end{equation*}
\emph{and take }$\overline{x}=0.$\emph{\ It is easy to see that}
$\left\{\left(\overline{x}
,-1\right)\right\}^{\bot}\cap\mathrm{cl}^{\ast}C\left(0\right)=\varnothing$.
{\rm Let us now check that
$\lip\mathcal{F}\left(0,\overline{x}\right)=0$. Indeed,
representation (\ref{eq_quotient}) yields
\begin{eqnarray*}
\lip\mathcal{F}\left(0,\overline{x}\right)&=&\underset{\left(
p,x\right)\rightarrow \left(0,0\right)}{\lim\sup
}\frac{\mbox{dist}\left(x;\mathcal{F}\left(p\right)\right)}{\mbox{dist}\left(p;
\mathcal{F}^{-1}\left(x\right)\right)}=\underset{\left(p,x\right)
\rightarrow\left(0,0\right)}{\lim\sup}\frac{\left[x-\inf_{t\ge
1}\left(\frac{1}{t^{2}}+\frac{p_{t}}{t}\right)\right]_{+}}{\sup_{t\ge
1}\left[ tx-\frac{1}{t}-p_{t}\right]_{+}}\\
&=&\underset{\left(p,x\right)\rightarrow \left( 0,0\right) }{\lim
\sup}\frac{\sup_{t\geq 1}\left[
x-\frac{1}{t^{2}}-\frac{p_{t}}{t}\right] _{+}}{\sup_{t\ge 1}\left[
tx-\frac{1}{t}-p_{t}\right]_{+}}.
\end{eqnarray*}
Taking into account that $\sup_{t\ge 1}\left[ tx-\frac{1}{t}-p_{t}
\right]_{+}=\infty$ if $x>0$ for every $p\in l_{\infty}\left(
\left[1,\infty\right)\right)$ and that for any $\left(p,x\right)
\in \varepsilon B_{l_{\infty }\left(\left[ 1,\infty \right)
\right)}\times\left[ -\varepsilon ,0\right]$ with $0<\varepsilon
\le 1$ we have $x-\frac{1}{t^{2}}-\frac{p_{t}}{t}\le 0$, it
follows that
\begin{equation*}
\lip\mathcal{F}\left(0,\overline{x}\right)=\underset{\left(
p,x\right)\rightarrow\left(0,0\right)}{\lim\sup}\frac{\sup_{t\ge
1/\varepsilon }\left[ x-\frac{1}{t^{2}}-\frac{p_{t}}{t}\right]
_{+}}{\sup_{t\ge 1/\varepsilon}\left[tx-\frac{1}{t}-p_{t}\right]
_{+}}\leq\varepsilon.
\end{equation*}
Since this holds for any $\varepsilon\in\left(0,1\right]$, we get
$\lip\mathcal{F}\left(0,\overline{x}\right)=0$ and thus conclude
our consideration in this example.}
\end{example}

Now we are ready to establish our major result in the case of
reflexive decision spaces $X$ in \eqref{eq_linear system}. Recall
that in this case the weak$^*$ closure $\mathrm{cl}^{\ast}\,S$ and
the norm closure $\mathrm{cl}\,S$ in $X^*$ agree for convex
subsets $S\subset X^*$.

\begin{theorem}
\label{Th_without boundedness}Assume that $X$ is reflexive and let
$ \overline{x}\in\mathcal{F}\left(0\right)$. If the SSC is
satisfied for $\sigma\left(0\right)$ in \eqref{eq_linear system},
then we have
\begin{equation}
\lip\mathcal{F}\left(0,\overline{x}\right)=\left\Vert D^{\ast}
\mathcal{F}\left(0,\overline{x}\right)\right\Vert=\sup
\left\{\left\Vert u^{\ast}\right\Vert^{-1}\Big|\;\left(
u^{\ast},\left\langle u^{\ast},\overline{x}\right\rangle
\right)\in\cl C\left(0\right)\right\}\label{eq_chain_ref}
\end{equation}
with $C(0)$ defined in \eqref{C1}, under the convention that
$\sup\varnothing:=0$.
\end{theorem}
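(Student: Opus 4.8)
The plan is to obtain the chain \eqref{eq_chain_ref} by sandwiching, combining the exact‑bound quotient formula \eqref{eq_quotient} with the reflexive distance representation \eqref{distC(p)} on one side and the coderivative description of Proposition~\ref{Prop_charact_coderiv} on the other. Write $M:=\sup\{\|u^{*}\|^{-1}\mid (u^{*},\langle u^{*},\overline{x}\rangle)\in\cl^{\ast}C(0)\}$ with $\sup\varnothing:=0$. Since the strong Slater condition is an open requirement, it persists for $\sigma(p)$ whenever $\|p\|$ is small, so Lemma~\ref{Lem_distance} is available on a whole neighborhood of the nominal parameter; moreover, $X$ being reflexive and $C(0)$ convex, Mazur's theorem gives $\cl^{\ast}C(0)=\cl C(0)$, so $M$ is exactly the right‑hand side of \eqref{eq_chain_ref}. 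Recall finally from Theorem~\ref{Th_lip} (via \cite[Theorem~1.44]{mor06a}) that the general lower estimate $\|D^{*}\mathcal{F}(0,\overline{x})\|\le\lip\mathcal{F}(0,\overline{x})$ holds in any Banach space. Hence it suffices to prove the two inequalities $\lip\mathcal{F}(0,\overline{x})\le M$ and $M\le\|D^{*}\mathcal{F}(0,\overline{x})\|$, whereupon $M\le\|D^{*}\mathcal{F}(0,\overline{x})\|\le\lip\mathcal{F}(0,\overline{x})\le M$ forces equalities. When $\overline{x}$ is a strong Slater point the section $\{(\overline{x},-1)\}^{\bot}\cap\cl^{\ast}C(0)$ is empty, so $M=0$ and both quantities vanish trivially; thus one may assume $\overline{x}$ is not a strong Slater point.

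For $\lip\mathcal{F}(0,\overline{x})\le M$ I would insert into \eqref{eq_quotient} the two distance expressions
\begin{equation*}
\mathrm{dist}\big(x;\mathcal{F}(p)\big)=\sup_{(x^{*},\alpha)\in C(p)}\frac{\big[\langle x^{*},x\rangle-\alpha\big]_{+}}{\|x^{*}\|},\qquad \mathrm{dist}\big(p;\mathcal{F}^{-1}(x)\big)=\sup_{t\in T}\big[\langle a_{t}^{*},x\rangle-b_{t}-p_{t}\big]_{+},
\end{equation*}
the first being the reflexive formula \eqref{distC(p)} with $C(p)=\co\{(a_{t}^{*},b_{t}+p_{t})\mid t\in T\}$ from \eqref{C1}, the second obtained by computing the supremum norm distance from $p$ to $\{q\mid q_{t}\ge\langle a_{t}^{*},x\rangle-b_{t}\}$. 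Writing a generic point of $C(p)$ as $x^{*}=\sum_{t}\lambda_{t}a_{t}^{*}$, $\alpha=\sum_{t}\lambda_{t}(b_{t}+p_{t})$ with $\sum_t\lambda_t=1$, one has $\langle x^{*},x\rangle-\alpha=\sum_{t}\lambda_{t}(\langle a_{t}^{*},x\rangle-b_{t}-p_{t})$, which is bounded by the denominator. The real work is to show that as $(p,x)\to(0,\overline{x})$ only asymptotically active indices contribute: the normalized coefficient vectors of the near‑maximizing combinations accumulate, \emph{in norm} thanks to reflexivity and Mazur, at a point $(u^{*},\beta)\in\cl^{\ast}C(0)$ lying on the hyperplane $\{(\overline{x},-1)\}^{\bot}$ (that is, $\beta=\langle u^{*},\overline{x}\rangle$), and the limiting value of the quotient is precisely $\|u^{*}\|^{-1}\le M$.

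For the reverse inequality $M\le\|D^{*}\mathcal{F}(0,\overline{x})\|$ I would use Proposition~\ref{Prop_charact_coderiv}, which in the linear case \eqref{eq_linear system} (where $\gph f_{t}^{*}=\{(a_{t}^{*},b_{t})\}$) reads: $p^{*}\in D^{*}\mathcal{F}(0,\overline{x})(x^{*})$ iff $(p^{*},-x^{*},-\langle x^{*},\overline{x}\rangle)$ belongs to the weak$^{*}$ closure of $\cone\{(-\delta_{t},a_{t}^{*},b_{t})\mid t\in T\}$. Fixing $(u^{*},\langle u^{*},\overline{x}\rangle)\in\cl^{\ast}C(0)$ with $u^{*}\ne0$, Mazur produces finite convex combinations $(\sum_{t}\lambda_{t}^{k}a_{t}^{*},\sum_{t}\lambda_{t}^{k}b_{t})\in C(0)$ converging to it in norm; the associated normalized data $x_{k}^{*}:=-\sum_{t}\lambda_{t}^{k}a_{t}^{*}/\|u^{*}\|$ and $p_{k}^{*}:=-\sum_{t}\lambda_{t}^{k}\delta_{t}/\|u^{*}\|$ satisfy $\|x_{k}^{*}\|\to1$ and $\|p_{k}^{*}\|=\|u^{*}\|^{-1}$. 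The aim is to pass to a weak$^{*}$ accumulation point inside the closed cone so as to land a pair $p^{*}\in D^{*}\mathcal{F}(0,\overline{x})(x^{*})$ with $\|x^{*}\|\le1$ and $\|p^{*}\|$ arbitrarily close to $\|u^{*}\|^{-1}$, yielding $\|D^{*}\mathcal{F}(0,\overline{x})\|\ge\|u^{*}\|^{-1}$ and, upon taking the supremum, $M\le\|D^{*}\mathcal{F}(0,\overline{x})\|$.

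The main obstacle lies in this last step. The convex combinations furnished by Mazur are only \emph{asymptotically} active, so the cone element $\sum_{t}\lambda_{t}^{k}(-\delta_{t},a_{t}^{*},b_{t})$ has third coordinate $\sum_{t}\lambda_{t}^{k}b_{t}$ differing from the value $\langle\sum_{t}\lambda_{t}^{k}a_{t}^{*},\overline{x}\rangle$ required by the coderivative form by the nonnegative activity gap $g_{k}:=\sum_{t}\lambda_{t}^{k}(b_{t}-\langle a_{t}^{*},\overline{x}\rangle)\to0$; this gap, not any finite combination, is what must be absorbed, and it is precisely here that the \emph{weak$^{*}$ closure} of the cone must be exploited with nets rather than sequences. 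The Examples show why: with unbounded coefficients $\{a_{t}^{*}\}$ the closure $\cl^{\ast}C(0)$ strictly enlarges the finitely supported active combinations (picking up entire affine pieces), which is exactly the phenomenon that forces ``$\sup$'' rather than ``$\max$'' in \eqref{eq_chain_ref} and that, in the bounded setting of Theorem~\ref{Th_lip}, was handled by weak$^{*}$ compactness of the relevant coefficient set. Reflexivity, through Mazur's identification $\cl^{\ast}C(0)=\cl C(0)$ and the ensuing norm‑approximation control, is the substitute for that boundedness and is the device that allows the activity gap to be driven to zero while keeping the coderivative norm ratio converging to $\|u^{*}\|^{-1}$.
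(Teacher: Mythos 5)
Your proposal has the same architecture as the paper's proof (the sandwich $M\le\Vert D^{\ast}\mathcal{F}(0,\ox)\Vert\le\lip\mathcal{F}(0,\ox)\le M$, using \cite[Theorem~1.44]{mor06a}, Proposition~\ref{Prop_charact_coderiv}, Lemma~\ref{Lem_distance} and Mazur's theorem), but both substantive inequalities are left with genuine gaps, and the difficulty you single out as the ``main obstacle'' is not the real one. In the direction $M\le\Vert D^{\ast}\mathcal{F}(0,\ox)\Vert$ there is no ``activity gap'' to absorb: Proposition~\ref{Prop_charact_coderiv} constrains only the \emph{limit} triple, not the individual cone elements, and since your starting point $(u^{\ast},\la u^{\ast},\ox\ra)\in\cl C(0)$ already has second coordinate equal to $\la u^{\ast},\ox\ra$, the norm-convergent Mazur combinations $\sum_{t}\lm^{k}_{t}(a^{\ast}_{t},b_{t})$ automatically produce a limit of the required form $(p^{\ast},-x^{\ast},-\la x^{\ast},\ox\ra)$ with $x^{\ast}=-u^{\ast}/\Vert u^{\ast}\Vert$; one only needs Alaoglu--Bourbaki on the measure coordinate to extract a weak$^{\ast}$ convergent subnet. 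The step you never address is the one that actually requires an idea: weak$^{\ast}$ convergence only gives $\Vert p^{\ast}\Vert\le\liminf_{k}\Vert p^{\ast}_{k}\Vert=\Vert u^{\ast}\Vert^{-1}$, which is the \emph{wrong} direction---to conclude $\Vert D^{\ast}\mathcal{F}(0,\ox)\Vert\ge\Vert u^{\ast}\Vert^{-1}$ you need a lower bound on $\Vert p^{\ast}\Vert$, and weak$^{\ast}$ limits of measures can lose mass. The paper secures this by pairing with the constant function $e\in l_{\infty}(T)$: since all atoms of $p^{\ast}_{k}=-\Vert u^{\ast}\Vert^{-1}\sum_{t}\lm^{k}_{t}\delta_{t}$ have the same sign, $\la p^{\ast},-e\ra=\lim_{\nu}\Vert u^{\ast}\Vert^{-1}\sum_{t}\lm_{t\nu}=\Vert u^{\ast}\Vert^{-1}$, whence $\Vert p^{\ast}\Vert=\Vert u^{\ast}\Vert^{-1}$ exactly. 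Without this (or an equivalent) device your accumulation point could a priori have arbitrarily small norm and the inequality does not follow.

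In the direction $\lip\mathcal{F}(0,\ox)\le M$ you state what must be proved but do not prove it, and the mechanism you invoke is incorrect: bounded sequences in $X^{\ast}$ accumulate weak$^{\ast}$ (equivalently weakly, by reflexivity), \emph{not} in norm---Mazur's theorem identifies the weak$^{\ast}$ and norm closures of the convex set $C(0)$, it does not produce norm-convergent subsequences of near-maximizers. The paper's argument runs by contradiction: assuming $\lip\mathcal{F}(0,\ox)>\al>M$, it obtains $(p_{r},x_{r})\to(0,\ox)$ with $\mbox{dist}(x_{r};\mathcal{F}(p_{r}))>\al\,\mbox{dist}(p_{r};\mathcal{F}^{-1}(x_{r}))$, and it is precisely this hypothesis that yields the norm bound $\Vert x^{\ast}_{r}\Vert<(\al-r^{-1})^{-1}$ for the near-maximizers $(x^{\ast}_{r},\al_{r})\in C(p_{r})$ chosen via \eqref{distC(p)}; only then does weak$^{\ast}$ sequential compactness apply. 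Two further facts are needed to land in $\cl C(0)$: the inner semicontinuity of $\mathcal{F}$ under the SSC forces $\mbox{dist}(x_{r};\mathcal{F}(p_{r}))\to 0$ and hence $\al_{r_{k}}\to\la x^{\ast},\ox\ra$, and Lemma~\ref{Lemma1} guarantees $x^{\ast}\ne 0$, giving $M\ge\Vert x^{\ast}\Vert^{-1}\ge\al$, a contradiction. None of these steps appears in your sketch. Finally, your reduction to the non-strong-Slater case rests on the claim that $\lip\mathcal{F}(0,\ox)=0$ ``trivially'' when $\ox$ is a strong Slater point; without coefficient boundedness this is not trivial (it is an instance of the very inequality $\lip\mathcal{F}(0,\ox)\le M=0$ being proved), and the paper needs no such case split because its contradiction argument covers $M=0$ uniformly.
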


\begin{proof} As mentioned above, the inequality $\lip\mathcal{F}
\left(0,\overline{x}\right)\ge\left\Vert D^{\ast}\mathcal{F}\left(
0,\overline{x}\right) \right\Vert$ holds for general set-valued
mappings due to \cite[Theorem 1.44]{mor06a}. Let us next consider
the nontrivial case
$\left\{\left(\overline{x},-1\right)\right\}^{\bot}\cap\cl
C\left(0\right)\ne\varnothing$ and show that
\begin{equation}
\left\Vert D^{\ast}\mathcal{F}\left(0,\overline{x}\right)
\right\Vert\geq\sup\left\{\left\Vert u^{\ast}\right\Vert
^{-1}\Big|\;\left( u^{\ast},\left\langle
u^{\ast},\overline{x}\right\rangle\right)\in\cl C\left( 0\right)
\right\}.\label{eq_101}
\end{equation}
To proceed, take $u^{\ast}\in X^{\ast}$ such that
$\left(u^{\ast},\left\langle u^{\ast},\overline{x}\right\rangle
\right)\in\cl C\left(0\right)$. The fulfillment of the SSC for
$\sigma\left(0\right)$ in \eqref{eq_linear system} ensures that
$u^{\ast}\ne 0$ according to Lemma~\ref{Lemma1}. By the latter
inclusion, find a sequence $\left\{\lambda_{k}\right\}_{k\in
\mathbb{N}}$ with $\lambda _{k}=\left(\lambda _{tk}\right)_{t\in
T} \in\mathbb{R}_{+}^{\left(T\right)}$ and $\sum_{t\in T}\lambda
_{tk}=1$ as $k\in\mathbb{N}$ satisfying
\begin{equation}
\big( u^{\ast},\left\langle
u^{\ast},\overline{x}\right\rangle\big)=\lim_{k\rightarrow \infty
}\sum_{t\in T}\lambda_{tk}\left(a_{t}^{\ast},b_{t}\right).
\label{eq_102}
\end{equation}
Since the sequence $\left\{\left\Vert
u^{\ast}\right\Vert^{-1}\sum_{t\in T}\lambda_{tk}\left(-\delta
_{t}\right)\right\}_{k\in\mathbb{N}}$ is contained in $\left\Vert
u^{\ast}\right\Vert ^{-1}B_{l_{\infty }\left( T\right)}$, the
classical Alaoglu-Bourbaki theorem ensures that a certain subnet
of this sequence (indexed by $\nu\in\mathcal{N}$) weak$^{\ast}$
converges to some $p^{\ast }\in l_{\infty}\left(T\right)^{\ast}$
with $\left\Vert p^{\ast}\right\Vert\le\left\Vert
u^{\ast}\right\Vert^{-1}$. Denoting by $e\in l_{\infty}\left(
T\right)$ the function whose coordinates are identically one, we
get
\begin{equation*}
\left\langle p^{\ast},-e\right\rangle=\lim_{\nu\in
\mathcal{N}}\left\Vert u^{\ast}\right\Vert^{-1}\sum_{t\in
T}\lambda_{t\nu }=\left\Vert u^{\ast }\right\Vert ^{-1},
\end{equation*}
and hence $\left\Vert p^{\ast}\right\Vert=\left\Vert
u^{\ast}\right\Vert^{-1}.$ Appealing now to (\ref{eq_102}) gives
us, for the subnet under consideration, the equality
\begin{equation*}
\left( p^{\ast},\left\Vert u^{\ast}\right\Vert ^{-1}u^{\ast
},\left\langle\left\Vert
u^{\ast}\right\Vert^{-1}u^{\ast},\overline{x}\right\rangle\right)
=w^{\ast}\text{-}\lim_{\nu\in\mathcal{N}}\left\Vert
u^{\ast}\right\Vert^{-1}\sum_{t\in T}\lambda_{t\nu}\left(-\delta
_{t},a_{t}^{\ast},b_{t}\right).
\end{equation*}
Employing further the coderivative description from
Proposition~\ref{Prop_charact_coderiv} yields
\begin{equation*}
p^{\ast}\in D^{\ast}\mathcal{F}\left( 0,\overline{x}\right)\left(
-\left\Vert u^{\ast}\right\Vert^{-1}u^{\ast}\right),
\end{equation*}
which implies by definition (\ref{eq_norm coderiv}) of the
coderivative norm that
\begin{equation*}
\left\Vert D^{\ast}\mathcal{F}\left( 0,\overline{x}\right)
\right\Vert\ge\left\Vert p^{\ast}\right\Vert=\left\Vert
u^{\ast}\right\Vert^{-1}.
\end{equation*}
Since $u^{\ast}$ was arbitrarily chosen from those satisfying
$\left(u^{\ast},\left\langle u^{\ast},\overline{x}\right\rangle
\right)\in\cl C\left(0\right)$, we arrive at the lower estimate
(\ref{eq_101}) for the coderivative norm.

Now let us prove the upper estimate for the exact Lipschitzian
bound
\begin{equation}\label{up-lip}
\lip\mathcal{F}\left( 0,\overline{x}\right)\le\sup \left\{
\left\Vert
u^{\ast}\right\Vert^{-1}\Big|\;\left(u^{\ast},\left\langle
u^{\ast},\overline{x}\right\rangle\right)\in\cl C\left(0\right)
\right\},
\end{equation}
which ensures, together with the lower estimates above, the
fulfillments of both equalities in \eqref{eq_chain_ref}. Arguing
by contradiction, find $\alpha>0$ such that
\begin{equation}
\lip\mathcal{F}\left(0,\overline{x}\right)>\alpha>\sup
\left\{\left\Vert
u^{\ast}\right\Vert^{-1}\Big|\;\left(u^{\ast},\left\langle
u^{\ast},\overline{x}\right\rangle\right)\in\cl C\left(0\right)
\right\}.\label{eq_103}
\end{equation}
According to the first inequality of (\ref{eq_103}), there are
sequences $p_{r}=\left( p_{tr}\right)_{t\in T}\rightarrow 0$ and
$x_{r}\rightarrow\overline{x}$ such that
\begin{equation}
\mbox{dist}\big(x_{r};\mathcal{F}(p_{r})\big)
>\alpha\,\mbox{dist}\big(p_{r};\mathcal{F}^{-1}\left(
x_{r}\right)\big)\;\text{ for all }\;r\in\mathbb{N}.
\label{eq_104}
\end{equation}
By the SSC for $\sigma\left(0\right)$ we have that
$\mathcal{F}\left( p_{r}\right)\ne\varnothing$ for $r$
sufficiently large (say for all $r$ without loss of generality).
This SSC is equivalent to the Lipschitz-like property of the
corresponding solution map $\mathcal{F}$ around $\left(
0,\overline{x}\right)$ and also to the inner/lower semicontinuity
of $\mathcal{F} $ around $\overline{x}$ by
\cite[Theorem~5.1]{DGL08}, which entails that
\begin{equation}
\lim_{r\rightarrow\infty}\mbox{dist}\big(x_{r};\mathcal{F}\left(p_{r}\big)
\right)=0.\label{semicontinf}
\end{equation}
Moreover, it follows from (\ref{eq_104}) that the quantity
\begin{eqnarray}
\mbox{dist}\big(p_{r};\mathcal{F}^{-1}\left(x_{r}\right)\big)
&=&\sup_{t\in T}\left[\left\langle a_{t}^{\ast},x_{r}\right\rangle
-b_{t}-p_{tr}\right]_{+}\label{nueva}\\
&=&\sup_{\left( x^{\ast },\alpha\right)\in C\left(p_{r}\right)
}\left[\left\langle x^{\ast},x_{r}\right\rangle-\alpha\right]
_{+}\notag
\end{eqnarray}
is finite. It follows from Lemma~\ref{Lem_distance} while
$\left\Vert p_{r}\right\Vert\le\eta$, $r=1,2,...$, that
\begin{equation*}
\mbox{dist}\big(x_{r};\mathcal{F}\left(p_{r}\right)\big)=\sup_{
_{\substack{ {x^{\ast}\in X^{\ast}\diagdown \{0\},\ \alpha\in
\mathbb{R},}\\\left(x^{\ast},\alpha \right)\in C\left(
p_{r}\right)}}}\frac{\left[\left\langle
x^{\ast},x_{r}\right\rangle-\alpha \right]_{+}}{\left\Vert
x^{\ast}\right\Vert},\quad r=1,2,... .
\end{equation*}
This allows us to find $\left(x_{r}^{\ast},\alpha_{r}\right)\in
C\left( p_{r}\right)\diagdown\{0\}$ as $r=1,2,...$ satisfying
\begin{equation}
0\le\mbox{dist}\big(x_{r},\mathcal{F}\left(p_{r}\right)\big)-
\frac{\left\langle x_{r}^{\ast },x_{r}\right\rangle-\alpha
_{r}}{\left\Vert x_{r}^{\ast}\right\Vert
}<\frac{\mbox{dist}\big(p_{r};\mathcal{F}^{-1}\left(x_{r}\right)
\big)}{r}. \label{unosobrer}
\end{equation}
Furthermore, by (\ref{eq_104}) and (\ref{nueva}) we can choose
$\left(x_{r}^{\ast},\alpha_{r}\right)$ in such a way that
\begin{equation}
\alpha\,\mbox{dist}\big(p_{r};\mathcal{F}^{-1}\left(x_{r}\right)
\big)<\frac{\left\langle
x_{r}^{\ast},x_{r}\right\rangle-\alpha_{r}}{ \left\Vert
x_{r}^{\ast}\right\Vert}+\frac{\mbox{dist}\big(p_{r};\mathcal{F}^{-1}\left(x_{r}\right)
\big)}{r}\le\frac{\mbox{dist}\big(p_{r};
\mathcal{F}^{-1}\left(x_{r}\right)\big)}{\left\Vert
x_{r}^{\ast}\right\Vert }.\label{clave}
\end{equation}
Since dist$(p_{r};\mathcal{F}^{-1}\left(x_{r}\right))>0$, we
deduce from (\ref{clave}) that
\begin{equation*}
\alpha<\frac{1}{\|x^*_r\|}+\frac{1}{r}\;\mbox{ and }0<\left\Vert
x_{r}^{\ast}\right\Vert<\frac{1}{\alpha-r^{-1}}\;\mbox{ for all
}\;r=1,2,...,
\end{equation*}
and thus, by the weak$^*$ sequential compactness of the unit ball
in dual to reflexive spaces, select a subsequence $\left\{
x_{r_{k}}^{\ast}\right\}_{k\in \mathbb{N}}$, which weak$^*$
converges to some $x^{\ast}\in X^*$ satisfying $\left\Vert
x^{\ast}\right\Vert\le 1/\alpha$. Then we get from
(\ref{semicontinf}) and (\ref{unosobrer}) that
\begin{equation*}
\lim_{k\in \mathbb{N}}\frac{\left\langle
x_{r_{k}}^{\ast},x_{r_{k}}\right\rangle-\alpha
_{r_{k}}}{\left\Vert x_{r_{k}}^{\ast}\right\Vert}=0,
\end{equation*}
which implies in turn that
\begin{equation*}
\lim_{k\in\mathbb{N}}\big(\left\langle
x_{r_{k}}^{\ast},x_{r_{k}}\right\rangle-\alpha_{r_{k}}\big)=0.
\end{equation*}
Since the sequence $\left\{x_{r_{k}}\right\}_{k\in\mathbb{N}}$
converges (in norm) to $\overline{x}$, the latter implies that
\begin{equation*}
\lim_{k\in \mathbb{N}}\alpha_{r_{k}}=\lim_{k\in\mathbb{N}
}\left\langle x_{r_{k}}^{\ast},x_{r_{k}}\right\rangle
=\left\langle x^{\ast},\overline{x}\right\rangle.
\end{equation*}
Taking into account for each $k\in\mathbb{N}$ we have
$\left(x_{r_{k}}^{\ast},\alpha _{r_{k}}\right)\in
C\left(p_{r_{k}}\right)$,  there exist $\lambda_{r_{k}}=(\lambda
_{tr_{k}})_{t\in T}$ such that $\lambda_{tr_{k}}\ge 0$, only
finitely many of them are positive,
\begin{equation*}
\sum_{t\in T}\lambda_{tr_{k}}=1,\;\mbox{ and
}\;(x_{r_{k}}^{\ast},\alpha _{r_{k}})=\sum_{t\in T}\lambda
_{tr_{k}}\left(a_{t}^{\ast},b_{t}+p_{tr_{k}}\right),\quad
k\in\mathbb N.
\end{equation*}
Combining all the above gives us the relationships
\begin{eqnarray*}
\big(x^{\ast},\left\langle x^{\ast},\overline{x}\right\rangle\big)
&=&w^{\ast}\text{-}\lim_{k\in\mathbb{N}}\big(x_{r_{k}}^{\ast},
\left\langle x_{r_{k}}^{\ast},x_{r_{k}}\right\rangle\big)\\
&=&w^{\ast}\text{-}\lim_{k\in\mathbb{N}}(x_{r_{k}}^{\ast},\alpha_{r_{k}})\\
&=&w^{\ast}\text{-}\lim_{k\in\mathbb{N}}\sum_{t\in T}\lambda_{tr_{k}}
\left(a_{t}^{\ast},b_{t}+p_{tr_{k}}\right)\\
&=&w^{\ast}\text{-}\lim_{k\in\mathbb{N}}\sum_{t\in
T}\lambda_{tr_{k}}\left( a_{t}^{\ast},b_{t}\right)\in\cl
C\left(0\right).
\end{eqnarray*}
Observe finally that $x^{\ast}\ne 0$ because, by
Lemma~\ref{Lemma1}, the linear infinite system
$\sigma\left(0\right)$ satisfies the SSC. This allows us to
conclude that
\begin{equation*}
\sup\left\{\left\Vert
u^{\ast}\right\Vert^{-1}\big|\;\big(u^{\ast},\left\langle
u^{\ast},\overline{x}\right\rangle\big)\in\cl C\left(0\right)
\right\} \geq \left\Vert x^{\ast}\right\Vert^{-1}\ge\alpha,
\end{equation*}
which contradicts (\ref{eq_103}) and thus completes the proof of
the theorem.
\end{proof}\vspace*{0.05in}

As mentioned above, the results obtained in this section for
linear infinite systems make it possible to drop the major
boundedness assumptions imposed in the corresponding results of
Section~4 for convex infinite systems in reflexive spaces. Let us
present the improved ``reflexive" version of Theorem~\ref{Th_lip},
the main result of the preceding section.

\begin{theorem}\label{lip-convex} Let $\overline{x}\in\mathcal{F}\left(0\right)$
for the solution map \eqref{eq_fsm} to the convex inequality
system $\sigma(p)$ in \eqref{eq_conv_sys} with a reflexive Banach
decision space $X$. If the SSC is satisfied for
$\sigma\left(0\right)$, then the following hold:

{\bf (i)} $\lip{\cal F}(0,\ox)=0$ provided that $\ox$ is a strong
Slater point of $\sigma(0)$;

{\bf (ii)} otherwise we have
\begin{equation*}
\mathrm{lip}\,\mathcal{F}\left(0,\overline{x}\right)=\sup\left\{
\left\Vert
u^{\ast}\right\Vert^{-1}\Big|\;\big(u^{\ast},\left\langle
u^{\ast},\overline{x}\right\rangle\big)\in \mathrm{cl}^{\ast
}C\left(0\right)\right\},
\end{equation*}
where the characteristic set $C(0)$ is defined in \eqref{C}.
\end{theorem}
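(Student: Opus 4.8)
The plan is to reduce Theorem~\ref{lip-convex} entirely to its linear reflexive counterpart, Theorem~\ref{Th_without boundedness}, via the linearization $\widetilde{\sigma}$ of $\sigma$. The key observation is that the characteristic set of the \emph{linear} system $\widetilde{\sigma}(0)$, computed through \eqref{C1} with coefficients $\{(u^{\ast},f_{t}^{\ast}(u^{\ast}))\mid(t,u^{\ast})\in\widetilde{T}\}$, is nothing but the convex characteristic set $C(0)$ of \eqref{C}; indeed $\gph f_{t}^{\ast}=\{(u^{\ast},f_{t}^{\ast}(u^{\ast}))\mid u^{\ast}\in\dom f_{t}^{\ast}\}$, so $\co\{(u^{\ast},f_{t}^{\ast}(u^{\ast}))\mid(t,u^{\ast})\in\widetilde{T}\}=\co(\bigcup_{t\in T}\gph f_{t}^{\ast})=C(0)$. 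Moreover the SSC for $\sigma(0)$ is equivalent to the SSC for $\widetilde{\sigma}(0)$, and $\overline{x}\in\mathcal{F}(0)=\widetilde{\mathcal{F}}(0)$. Since $X$ is reflexive, $\mathrm{cl}^{\ast}C(0)=\mathrm{cl}\,C(0)$ for the convex set $C(0)$, so I may pass freely between the two closures.

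First I would settle the \emph{upper estimate}, which covers both (i) and the bound ``$\le$'' in (ii) at once. Applying Theorem~\ref{Th_without boundedness} to the linear system $\widetilde{\sigma}$ --- admissible precisely because $X$ is reflexive and no boundedness of the coefficient set $\bigcup_{t}\dom f_{t}^{\ast}$ is required there --- and substituting the identity above gives
\begin{equation*}
\lip\widetilde{\mathcal{F}}(0,\overline{x})=\sup\Big\{\|u^{\ast}\|^{-1}\;\Big|\;\big(u^{\ast},\langle u^{\ast},\overline{x}\rangle\big)\in\mathrm{cl}^{\ast}C(0)\Big\}.
\end{equation*}
Lemma~\ref{lem7} then yields $\lip\mathcal{F}(0,\overline{x})\le\lip\widetilde{\mathcal{F}}(0,\overline{x})$, which is the upper bound claimed in (ii). In case (i), when $\overline{x}$ is a strong Slater point, the admissible set $\{(\overline{x},-1)\}^{\bot}\cap\mathrm{cl}^{\ast}C(0)$ is empty, so the supremum is $\sup\varnothing=0$ and hence $\lip\mathcal{F}(0,\overline{x})=0$.

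It remains to prove the reverse inequality in (ii). Here I cannot simply import the coderivative norm of $\widetilde{\mathcal{F}}$, because $\widetilde{\mathcal{F}}$ is defined on $l_{\infty}(\widetilde{T})$ while $\mathcal{F}$ is defined on $l_{\infty}(T)$, and their coderivatives are distinct objects. Instead I use $\lip\mathcal{F}(0,\overline{x})\ge\|D^{\ast}\mathcal{F}(0,\overline{x})\|$ from \cite[Theorem~1.44]{mor06a} and estimate the coderivative norm of $\mathcal{F}$ intrinsically, re-running the lower-bound construction in the proof of Theorem~\ref{Th_without boundedness} but now through the \emph{convex} coderivative description of Proposition~\ref{Prop_charact_coderiv}. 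Concretely: fix $u^{\ast}$ with $(u^{\ast},\langle u^{\ast},\overline{x}\rangle)\in\mathrm{cl}^{\ast}C(0)$ (so $u^{\ast}\ne 0$ by Lemma~\ref{Lemma1}) and write it as a weak$^{\ast}$ limit of a net $\sum_{t\in T}\alpha_{t\nu}(v_{t\nu}^{\ast},f_{t}^{\ast}(v_{t\nu}^{\ast}))$ with $\alpha_{\nu}\in\mathbb{R}_{+}^{(T)}$, $\sum_{t}\alpha_{t\nu}=1$, and $v_{t\nu}^{\ast}\in\dom f_{t}^{\ast}$, exactly as the convex characteristic set is represented in the proof of Lemma~\ref{Lemma1}. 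The companion net $\|u^{\ast}\|^{-1}\sum_{t}\alpha_{t\nu}(-\delta_{t})$ lies in $\|u^{\ast}\|^{-1}B_{l_{\infty}(T)^{\ast}}$, so by Alaoglu--Bourbaki a subnet converges weak$^{\ast}$ to some $p^{\ast}$ (whence $\|p^{\ast}\|\le\|u^{\ast}\|^{-1}$); pairing with the all-ones function $-e$ forces the reverse inequality and thus $\|p^{\ast}\|=\|u^{\ast}\|^{-1}$. Passing to this subnet in the full representation shows that $(p^{\ast},\|u^{\ast}\|^{-1}u^{\ast},\langle\|u^{\ast}\|^{-1}u^{\ast},\overline{x}\rangle)$ belongs to $\mathrm{cl}^{\ast}\mathrm{cone}(\bigcup_{t}[\{-\delta_{t}\}\times\gph f_{t}^{\ast}])$, whence Proposition~\ref{Prop_charact_coderiv} gives $p^{\ast}\in D^{\ast}\mathcal{F}(0,\overline{x})(-\|u^{\ast}\|^{-1}u^{\ast})$. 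Since the argument $-\|u^{\ast}\|^{-1}u^{\ast}$ has norm one, definition \eqref{eq_norm coderiv} yields $\|D^{\ast}\mathcal{F}(0,\overline{x})\|\ge\|p^{\ast}\|=\|u^{\ast}\|^{-1}$, and taking the supremum over all admissible $u^{\ast}$ closes the chain of inequalities, delivering both the exact bound and (as a by-product) $\lip\mathcal{F}(0,\overline{x})=\|D^{\ast}\mathcal{F}(0,\overline{x})\|$.

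The main obstacle is this final lower estimate: the linearization together with Lemma~\ref{lem7} supplies only the upper bound for $\lip\mathcal{F}$, and because the parameter spaces $l_{\infty}(T)$ and $l_{\infty}(\widetilde{T})$ genuinely differ, the lower bound must be produced intrinsically for $\mathcal{F}$ rather than transported from $\widetilde{\mathcal{F}}$. The delicate points are the net/Alaoglu--Bourbaki passage --- verifying $\|p^{\ast}\|=\|u^{\ast}\|^{-1}$ and that the limiting triple lands in the weak$^{\ast}$-closed cone of Proposition~\ref{Prop_charact_coderiv} --- together with the (routine but indispensable) identification of the linearized characteristic set with $C(0)$, which is exactly what lets Theorem~\ref{Th_without boundedness} be phrased in terms of the convex data without any boundedness hypothesis.
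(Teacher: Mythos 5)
Your proposal is correct and follows essentially the same route as the paper, whose proof of this theorem is just the one-line instruction to repeat the proof of Theorem~\ref{Th_lip} with Theorem~\ref{Th_without boundedness} replacing the bounded-coefficient result of \cite{CLMP09}: upper estimate by linearization plus Lemma~\ref{lem7}, lower estimate via $\lip\mathcal{F}(0,\overline{x})\ge\Vert D^{\ast}\mathcal{F}(0,\overline{x})\Vert$ and the Alaoglu--Bourbaki/Proposition~\ref{Prop_charact_coderiv} construction. Your write-up usefully makes explicit the two points the paper leaves implicit --- the identification of the linearized characteristic set with $C(0)$, and the fact that the coderivative-norm lower bound must be rerun intrinsically for $\mathcal{F}$ on $l_{\infty}(T)$ rather than imported from $\widetilde{\mathcal{F}}$ on $l_{\infty}(\widetilde{T})$ --- but these are exactly the steps the paper's cited proofs carry out.
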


\begin{proof} Follows the lines in the proof of
Theorem~\ref{Th_lip} with the usage of Theorem~\ref{Th_without
boundedness} instead of \cite[Theorem~4.6]{CLMP09} therein.
\end{proof}

\section{Optimality Conditions for Infinite Programs}

In this section we consider an infinite (or semi-infinite)
optimization problem of type $\mathrm{(P)}$ written in the form:
\begin{equation}\label{1.1}
\inf\ph(p,x)\;\mbox{ s.t. }\;x\in{\cal F}(p),
\end{equation}
where $x\in X$, $p=(p_t)_{t\in T}\in l_\infty(T)$ with an
arbitrary index set $T$, and where the set of feasible solutions
\begin{equation}\label{1.2}
{\cal F}(p):=\big\{x\in X\big|\;f_t(x)\le p_t,\;t\in T\big\}
\end{equation}
is defined by the parameterized infinite system of convex
inequalities \eqref{eq_conv_sys} over a general Banach space $X$
satisfying the standing assumptions formulated in Section~1. We
refer the reader to \cite{CLMP10} for the justification and
valuable examples of such a two-variable version of the
infinite/semi-infinite program $\mathrm{(P)}$ in the particular
case of linear infinite inequality systems \eqref{eq_linear
system}.

The main goal of the section is to derive {\em necessary
optimality conditions} for optimal solutions to \eqref{1.1} under
general requirements on {\em nonconvex} and {\em nonsmooth} cost
functions $\ph\colon l_\infty(T)\times X\to\overline{\mathbb{R}}$.
Involving the coderivative analysis of Section~4, we obtain
optimality conditions in the general {\em asymptotic form}
developed in \cite{CLMP10} for linear infinite systems; see also
the discussions and references therein on the comparison with
other kinds of optimality conditions in semi-definite and infinite
optimization. Furthermore, we establish results of two independent
types: {\em lower subdifferential} and {\em upper subdifferential}
depending on the type of subgradients used for cost functions; see
below.\vspace*{0.05in}

Let us start with {\em lower} subdifferential conditions, which
are of the conventional type in nonsmooth {\em minimization}.
Since our infinite/semi-infinite setup is given intrinsically in
general Banach spaces by the structure of \eqref{1.2} with $p\in
l_\infty(T)$ independently of the dimension of $X$, we cannot
employ the well-developed Asplund space theory from
\cite{mor06a,mor06b}. The most appropriate subdifferential
construction in our framework is the so-called approximate
subdifferential by Ioffe \cite{iof89,iof}, which is a general
(while more complicated, topological) Banach space extension of
the (sequential) basic/limiting subdifferential by Mordukhovich
\cite{mor76,mor06a} that may be larger than the latter even for
locally Lipschitzian functions on nonseparable Asplund spaces
while it is always smaller than the Clarke subdifferential; see
\cite[Subsection~3.2.3]{mor06a} for more details.

The approximate subdifferential constructions in Banach spaces are
defined by the following multistep procedure. Given a function
$\ph\colon Z\to\oR$ finite at $\oz$, we first consider its {\em
lower Dini} (or Dini-Hadamard) {\em directional derivative}
\begin{eqnarray*}
d^{-}\ph(\oz;v):=\liminf_{u\to v,\,t\dn
0}\frac{\ph(\oz+tu)-\ph(\oz)}{t},\quad v\in Z,
\end{eqnarray*}
and then define the {\em Dini $\ve$-subdifferential} of $\ph$ at
$\oz$ by
\begin{eqnarray*}
\partial^{-}_\ve\ph(\oz):=\big\{z^*\in Z^*\big|\;\la z^*,v\ra\le d^{-}\ph(\oz;v)+\ve\|v\|\;
\mbox{ for all }\;v\in Z\big\},\quad\ve\ge 0,
\end{eqnarray*}
putting $\partial^{-}_\ve\ph(\oz):=\emp$ if $\ph(\oz)=\infty$. The
{\em $A$-subdifferential} of $\ph$ at $\oz$ is defined via
topological limits involving finite-dimensional reductions of
$\ve$-subgradients by
\begin{eqnarray*}
\partial_A\ph(\oz):=\bigcap_{L\in{\cal
L},\,\ve>0}\Bar{\Limsup_{z\st{\ph}{\to}\oz}}\;\partial^{-}_\ve(\ph+\dd_L)(z),
\end{eqnarray*}
where ${\cal L}$ signifies the collection of all the
finite-dimensional subspaces of $Z$, where $z\st{\ph}{\to}\oz$
means that $z\to\oz$ with $\ph(z)\to\ph(\oz)$, and where
$\overline{\Limsup}$ stands for the {\em topological}
Painlev\'e-Kuratowski upper/outer limit of a mapping $F\colon
Z\tto Z^*$ as $z\to\oz$ defined by
\begin{eqnarray*}
\begin{array}{ll}
\overline{\disp\Limsup_{z\to\oz}}\,F(z):=\Big\{z^*\in
Z^*\Big|&\exists\mbox{ net }\;
(z_\nu,z^*_\nu)_{\nu\in{\cal N}}\subset Z\times Z^*\;\mbox{ s.t. }\;z^*_\nu\in F(z_\nu),\\
&\disp(z_\nu,z^*_\nu)\st{\|\cdot\|\times w^*}{\to}(\oz,z^*)\Big\}.
\end{array}
\end{eqnarray*}
Then the {\em approximate $G$-subdifferential} of $\ph$ at $\oz$
(the main construction here called the ``nucleus of the
$G$-subdifferential" in \cite{iof89}) is defined by
\begin{eqnarray}\label{5.1}
\partial_G\ph(\oz):=\Big\{z^*\in X^*\Big|\;(z^*,-1)\in\disp
\bigcup_{\lm>0}\lm\partial_A{\rm{dist}}
\big((\oz,\ph(\oz));\epi\ph\big)\Big\},
\end{eqnarray}
where $\epi\ph:=\{(z,\mu)\in Z\times\R|\;\mu\ge\ph(z)\}$. This
construction, in any Banach space $Z$, reduces to the classical
derivative in the case of smooth functions and to the classical
subdifferential of convex analysis if $\ph$ is convex. In what
follows we also need the {\em singular $G$-subdifferential} of
$\ph$ at $\ox$ defined by
\begin{eqnarray}\label{5.2}
\partial_G^\infty\ph(\oz):=\Big\{z^*\in X^*\Big|\;(z^*,0)\in\disp\bigcup_{\lm>0}\lm
\partial_A{\rm{dist}}\big((\oz,\ph(\oz));\epi\ph\big)\Big\}.
\end{eqnarray}
Note that $\partial^\infty_G\ph(\oz)=\{0\}$ if $\ph$ is locally
Lipschitzian around $\ox$.\vspace*{0.05in}

Now we are ready to derive the lower subdifferential necessary
optimality conditions for problem (\ref{1.1}) with the convex
infinite constraints (\ref{1.2}) and a general nonsmooth cost
function $\ph$. These conditions and the subsequent results of
this section address an arbitrary local minimizer
$(\op,\ox)\in\gph{\cal F}$ to the problem under consideration.
Following our convention in the previous sections, we suppose
without loss of generality that $\op=0$.

\begin{theorem}\label{lower} Let $(0,\ox)\in\gph{\cal F}$ be a
local minimizer for problem {\rm(\ref{1.1})} with the constraint
system {\rm(\ref{1.2})} given by the infinite convex inequalities
$\sigma(p)$ in a Banach space $X$. Assume that the cost function
$\ph\colon l_\infty(T)\times X\to\oR$ is lower semicontinuous
around $(0,\ox)$ with $\ph(0,\ox)<\infty$. Suppose furthermore
that:

{\bf (a)} either $\ph$ is locally Lipschitzian around $(0,\ox)$;

{\bf (b)} or ${\rm{int}}(\gph{\cal F})\ne\emp$ $($which holds, in
particular, when the SSC holds for $\sigma(0)$ and the set
$\cup_{t\in T}\dom f^*_t$ is bounded$)$ and the system
\begin{eqnarray}\label{5.3}
(p^*,x^*)\in\partial^\infty_G\ph(0,\ox),\quad-\big(p^*,x^*,\la
x^*,\ox\ra\big)\in{\rm{cl}}^*\mathrm{cone}\left(\bigcup_{t\in T}
\big[\left\{-\delta_{t}\right\}\times\mathrm{gph}\,
f_{t}^{\ast}\big]\right)
\end{eqnarray}
admits only the trivial solution $(p^*,x^*)=(0,0)$.\\[1ex]
Then there is a $G$-subgradient pair
$(p^*,x^*)\in\partial_G\ph(0,\ox)$ such that
\begin{eqnarray}\label{5.4}
-\big(p^*,x^*,\la
x^*,\ox\ra\big)\in{\rm{cl}}^*\mathrm{cone}\left(\bigcup_{t\in T}
\big[\left\{-\delta_{t}\right\}\times\mathrm{gph}\,
f_{t}^{\ast}\big]\right).
\end{eqnarray}
\end{theorem}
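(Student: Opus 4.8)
The plan is to reduce the constrained minimization problem \eqref{1.1} to an unconstrained one via the indicator function of the graph, and then apply a sum rule for the approximate $G$-subdifferential together with the coderivative computation from Proposition~\ref{Prop_charact_coderiv}. Since $(0,\ox)$ is a local minimizer for $\ph$ over $\gph{\cal F}$, the point $(0,\ox)$ is an unconstrained local minimizer of the sum $\ph+\delta_{\gph{\cal F}}$, whence $(0,0)\in\partial_G\big(\ph+\delta_{\gph{\cal F}}\big)(0,\ox)$ by the generalized Fermat rule valid for the $G$-subdifferential in arbitrary Banach spaces (see \cite{iof,iof89}). The task is then to separate this joint subgradient into a contribution from $\ph$ and a contribution from the constraint set.

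\textbf{Applying the sum rule and identifying the normal cone.}

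First I would invoke the sum rule for the approximate $G$-subdifferential, which asserts
\begin{equation*}
\partial_G\big(\ph+\delta_{\gph{\cal F}}\big)(0,\ox)\subset\partial_G\ph(0,\ox)+\partial_G\delta_{\gph{\cal F}}(0,\ox)=\partial_G\ph(0,\ox)+N\big((0,\ox);\gph{\cal F}\big),
\end{equation*}
where the last equality holds because the $G$-subdifferential of an indicator function is the corresponding $G$-normal cone, and since $\gph{\cal F}$ is convex this $G$-normal cone coincides with the normal cone of convex analysis. The validity of this sum rule requires a qualification condition guaranteeing that the ``mixing'' of singular subgradients is trivial: precisely, that the only pair $(p^*,x^*)\in\partial_G^\infty\ph(0,\ox)$ with $-(p^*,x^*)\in N((0,\ox);\gph{\cal F})$ is the zero pair. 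Under hypothesis (a), $\ph$ is locally Lipschitzian so $\partial_G^\infty\ph(0,\ox)=\{0\}$ and the qualification holds automatically; under hypothesis (b), the nonempty interior of $\gph{\cal F}$ supplies the needed interiority/normal-compactness condition for the convex set, and the displayed system \eqref{5.3} is exactly the statement that the qualification condition holds, since by Proposition~\ref{Prop_charact_coderiv} the membership $-(p^*,x^*)\in N((0,\ox);\gph{\cal F})$ is equivalent to $-(p^*,x^*,\la x^*,\ox\ra)$ lying in the $w^*$-closed cone on the right-hand side of \eqref{5.3}.

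\textbf{Extracting the constraint representation.}

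Having $(0,0)\in\partial_G\ph(0,\ox)+N((0,\ox);\gph{\cal F})$, I would choose $(p^*,x^*)\in\partial_G\ph(0,\ox)$ such that $-(p^*,x^*)\in N((0,\ox);\gph{\cal F})$. By Proposition~\ref{Prop_charact_coderiv}, which characterizes the coderivative of ${\cal F}$ (equivalently, the normal cone to its graph, given its convexity) precisely in terms of the $w^*$-closed convex cone generated by $\bigcup_{t\in T}[\{-\delta_t\}\times\gph f_t^*]$, the inclusion $-(p^*,x^*)\in N((0,\ox);\gph{\cal F})$ translates verbatim into \eqref{5.4}. This yields the desired $G$-subgradient pair and completes the argument.

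\textbf{Main obstacle.}

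The delicate point is the justification of the sum rule in the \emph{topological} Banach-space setting of the $G$-subdifferential, where one does not have the sequential machinery of Asplund spaces. The fuzzy/exact sum rules for $\partial_G$ hold in general Banach spaces (this is a principal advantage of Ioffe's construction over the basic subdifferential), but they demand a qualification condition phrased through the singular subdifferential $\partial_G^\infty\ph$ and a normal compactness property of $\gph{\cal F}$. Verifying that hypotheses (a) and (b) genuinely supply these conditions---in particular, that $\mathrm{int}(\gph{\cal F})\ne\emp$ delivers the requisite compactly-epi-Lipschitzian (or equivalent) behavior of the convex constraint set so that the exact sum rule applies with no residual singular term---is where the real care is needed, and I expect this to be the technical heart of the proof.
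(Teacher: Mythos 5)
Your proposal is correct and follows essentially the same route as the paper's own proof: penalization by the indicator of $\gph{\cal F}$, the generalized Fermat rule, Ioffe's $G$-subdifferential sum rule (applicable under (a) via local Lipschitz continuity, or under (b) via nonemptiness of ${\rm{int}}(\gph{\cal F})$ and the singular-subdifferential qualification condition), and finally Proposition~\ref{Prop_charact_coderiv} to convert the resulting normal-cone inclusion into \eqref{5.4} and the qualification condition into \eqref{5.3}. The only cosmetic difference is that the paper passes explicitly through the coderivative inclusion $-p^*\in D^*{\cal F}(0,\ox)(x^*)$ before invoking Proposition~\ref{Prop_charact_coderiv}, while you identify the normal cone with the weak$^*$ closed cone directly; this is the same step.
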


\begin{proof} The original problem (\ref{1.1}) can be rewritten as
a mathematical program with geometric constraints:
\begin{eqnarray}\label{5.5}
\mbox{minimize }\;\ph(p,x)\;\mbox{ subject to }\;(p,x)\in\gph{\cal
F},
\end{eqnarray}
which can be equivalently described in the form of by
unconstrained minimization with ``infinite penalties":
\begin{eqnarray*}
\mbox{minimize }\;\ph(p,x)+\dd\big((p,x);\gph{\cal F}\big).
\end{eqnarray*}
By the $G$-generalized Fermat stationary rule for the latter
problem, we have
\begin{eqnarray}\label{5.6}
(0,0)\in\partial_G\big[\ph+\dd(\cdot;\gph{\cal F})\big](0,\ox).
\end{eqnarray}
Employing the $G$-subdifferential sum rule to (\ref{5.6}),
formulated in \cite[Theorem~7.4]{iof89} for the ``nuclei", gives
us
\begin{eqnarray}\label{5.7}
(0,0)\in\partial_G\ph(0,\ox)+N\big((0,\ox);\gph{\cal F}\big)
\end{eqnarray}
provided that either $\ph$ is locally Lipschitzian around
$(0,\ox)$, or the interior of $\gph{\cal F}$ is nonempty and the
qualification condition
\begin{eqnarray}\label{5.8}
\partial^\infty_G\ph(0,\ox)\cap\big[-N\big((0,\ox);\gph{\cal F}
\big)\big]=\{0,0\}
\end{eqnarray}
is satisfied. It is not hard to check (cf.\
\cite[Remark~2.4]{CLMP09}) that the strong Slater condition for
$\sigma(0)$ and the boundedness of the set $\cup_{t\in T}\dom
f^*_t$ imply that the interior of $\gph{\cal F}$ is not empty.
Observe further that, due to the coderivative definition
(\ref{cod}), the optimality condition (\ref{5.7}) can be
equivalently written as
\begin{eqnarray}\label{5.9}
\mbox{there is }\;(p^*,x^*)\in\partial_G\ph(0,\ox)\;\mbox{ with
}\;-p^*\in D^*{\cal F}(0,\ox)(x^*).
\end{eqnarray}
Employing now in (\ref{5.9}) the coderivative calculation from
Proposition~\ref{Prop_charact_coderiv}, we arrive at (\ref{5.4}).
Similar arguments show that the qualification condition
(\ref{5.8}) can be expressed in the explicit form (\ref{5.3}), and
thus the proof is complete.
\end{proof}\vspace*{0.05in}

The result of Theorem~\ref{lower} can be represented in a much
simpler form for smooth cost functions in (\ref{1.1}); it also
seems to be new for infinite programming under consideration.
Recall that a function $\ph\colon Z\to\R$ is {\em strictly
differentiable} at $\oz$, with its gradient at this point denoted
by $\nabla\ph(\oz)\in Z^*$, if
\begin{eqnarray*}
\disp\lim_{z,u\to\oz}\frac{\ph(z)-\ph(u)-\la\nabla\ph(\oz),z-u\ra}{\|z-u\|}=0,
\end{eqnarray*}
which surely holds if $\ph$ is continuously differentiable around
$\oz$. Since we have
\begin{equation*}
\partial_G\ph(0,\ox)=\big\{\big(\nabla_p\ph(0,\ox),\nabla_x\ph(0,\ox)\big)\big\}
\end{equation*}
provided that $\ph$ in \eqref{1.1} is strictly differentiable at
$(0,\ox)$ (and hence locally Lipschitzian around this point), then
condition \eqref{5.4} reduces in this case to
\begin{eqnarray}\label{sm}
-\Big(\nabla_p\ph(0,\ox),\nabla_x\ph(0,\ox),\big\la\nabla_x\ph(0,\ox),\ox\big\ra\Big)
\in{\rm{cl}}^*{\rm cone}\left(\bigcup_{t\in T}
\big[\left\{-\delta_{t}\right\}\times\mathrm{gph}\,
f_{t}^{\ast}\big]\right).
\end{eqnarray}

Next we derive qualified asymptotic necessary optimality condition
of a new {\em upper subdifferential} type, initiated in
\cite{mor04} for other classes of optimization problems with
finitely many constraints; see also \cite[Section~4]{CLMP10} for
infinite programs with linear constraints. The upper
subdifferential optimality conditions presented below are
generally independent of Theorem~\ref{lower} for problems with
nonsmooth objectives; see the discussion below. The main
characteristic feature of upper subdifferential conditions is that
they apply to {\em minimization} problems but not to the expected
framework of maximization.

To proceed, we recall the notion of the {\em Fr\'echet upper
subdifferential} (known also as the Fr\'echet or viscosity
superdifferential) of $\ph\colon X\to\R$ at $\oz$ defined by
\begin{eqnarray}\label{5.11}
\Hat\partial^+\ph(\oz):=\Big\{z^*\in
Z^*\Big|\;\disp\limsup_{z\to\oz}\frac{\ph(z)-\ph(\oz)-\la
z^*,z-\oz\ra}{\|z-\oz\|}\le 0\Big\},
\end{eqnarray}
which reduces to the classical gradient $\nabla\ph(\oz)$ if $\ph$
is Fr\'echet differentiable at $\oz$  (may not be strictly) and to
the (upper) subdifferential of {\em concave} functions in the
framework of convex analysis.

\begin{theorem}\label{upper} Let $(0,\ox)\in\gph{\cal F}$ be a local
minimizer for problem {\rm(\ref{1.1})} with the convex infinite
constraint system {\rm(\ref{1.2})} in Banach spaces. Then every
upper subgradient $(p^*,x^*)\in\Hat\partial^+\ph(0,\ox)$ satisfies
inclusion {\rm(\ref{5.4})} in Theorem~{\rm\ref{lower}}.
\end{theorem}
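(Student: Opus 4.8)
The plan is to show that the upper subgradient $(p^*,x^*)$, taken with the opposite sign, is a normal vector to the convex graph $\gph{\cal F}$ at $(0,\ox)$, and then to convert this normality into \eqref{5.4} by feeding it into the coderivative computation of Proposition~\ref{Prop_charact_coderiv}. The conceptual point---and the reason this result needs \emph{no} constraint qualification, in contrast with Theorem~\ref{lower}---is that under local minimality a Fr\'echet upper subgradient is \emph{automatically} minus a normal to the feasible graph: the superdifferential inequality \eqref{5.11} is absorbed directly into the normal cone, so no subdifferential sum rule, and hence no qualification condition, need be invoked.

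First I would combine \eqref{5.11} with the local minimality of $(0,\ox)$. Fix $\ve>0$. Then for every feasible $(p,x)\in\gph{\cal F}$ sufficiently close to $(0,\ox)$, inequality \eqref{5.11} gives
\[
\la(p^*,x^*),(p,x)-(0,\ox)\ra\ge\ph(p,x)-\ph(0,\ox)-\ve\|(p,x)-(0,\ox)\|,
\]
while local optimality over $\gph{\cal F}$ gives $\ph(p,x)\ge\ph(0,\ox)$. Combining the two yields $\la(p^*,x^*),(p,x)-(0,\ox)\ra\ge-\ve\|(p,x)-(0,\ox)\|$, and since $\ve>0$ is arbitrary this is exactly
\[
\limsup_{(p,x)\st{\gph{\cal F}}{\to}(0,\ox)}\frac{\la-(p^*,x^*),(p,x)-(0,\ox)\ra}{\|(p,x)-(0,\ox)\|}\le 0,
\]
i.e., $-(p^*,x^*)$ belongs to the Fr\'echet normal cone to $\gph{\cal F}$ at $(0,\ox)$.

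Next I would exploit the convexity of $\gph{\cal F}$, already recorded in the proof of Proposition~\ref{Prop_charact_coderiv}. For a convex set the Fr\'echet normal cone coincides with the basic normal cone $N(\cdot;\gph{\cal F})$ of convex analysis (see, e.g., \cite{mor06a}); alternatively one checks this directly by restricting the ratio above to the rays $(0,\ox)+\lm[(p,x)-(0,\ox)]$, $\lm\dn 0$, which stay in $\gph{\cal F}$ and keep the quotient constant. Either way we upgrade to $-(p^*,x^*)\in N\big((0,\ox);\gph{\cal F}\big)$. Reading this as $(-p^*,-x^*)\in N((0,\ox);\gph{\cal F})$ and matching it against the coderivative definition \eqref{cod} (with $z^*=-p^*$ and $y^*=x^*$) shows that $-p^*\in D^*{\cal F}(0,\ox)(x^*)$.

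Finally I would apply Proposition~\ref{Prop_charact_coderiv} to the subgradient $-p^*\in D^*{\cal F}(0,\ox)(x^*)$, which delivers
\[
\big(-p^*,-x^*,-\la x^*,\ox\ra\big)\in\mathrm{cl}^*\mathrm{cone}\Big(\bigcup_{t\in T}\big[\{-\delta_t\}\times\gph f_t^*\big]\Big),
\]
and this is precisely \eqref{5.4}. I do not expect any genuinely hard step: the only points requiring care are the bookkeeping of the $\ve$-remainder in passing from \eqref{5.11} to the normal-cone inequality and the sign conventions in \eqref{cod}. The real content is the opening observation that minimality turns an upper subgradient into a normal vector, which is what makes the conclusion unconditional.
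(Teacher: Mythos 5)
Your proof is correct, and it shares the paper's overall skeleton: reduce the statement to the coderivative inclusion $-p^*\in D^*{\cal F}(0,\ox)(x^*)$ and then let Proposition~\ref{Prop_charact_coderiv} convert it into \eqref{5.4}. Where you diverge is in how that inclusion is obtained. The paper's proof defers to \cite[Theorem~4.1]{CLMP10}, which runs through the smooth variational description of Fr\'echet subgradients \cite[Theorem~1.88(i)]{mor06a}: the upper subgradient $(p^*,x^*)$ is realized as the gradient of a function $s$ that is Fr\'echet differentiable at $(0,\ox)$, majorizes $\ph$ nearby, and agrees with it at $(0,\ox)$; then $(0,\ox)$ is a local minimizer of $s$ over $\gph{\cal F}$, and the Fermat rule applied to the differentiable $s$ gives $-(p^*,x^*)\in N\big((0,\ox);\gph{\cal F}\big)$. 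You bypass that device entirely: your $\ve$-argument extracts Fr\'echet normality of $-(p^*,x^*)$ straight from the definition \eqref{5.11} combined with local minimality (correctly intersecting the two neighborhoods involved), and your ray argument then upgrades it, via convexity of $\gph{\cal F}$, to the convex-analysis normal cone appearing in \eqref{cod}; the sign bookkeeping and the final application of Proposition~\ref{Prop_charact_coderiv} are also correct. What your route buys is self-containedness and elementarity---nothing beyond the definitions and convexity of the graph is used, which makes transparent why no constraint qualification is needed here, exactly as you emphasize. What the paper's cited route buys is uniformity with the linear-constraint case of \cite{CLMP10} and a reusable template (reduction of upper-subdifferential conditions to the smooth case) that does not lean on convexity of the feasible graph. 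In the present convex setting the two arguments carry essentially the same mathematical content.
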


\begin{proof} It follows the proof of
\cite[Theorem~4.1]{CLMP10} based on the variational description of
Fr\'echet subgradients in \cite[Theorem~1.88(i)]{mor06a} and
computing the coderivative of the feasible solution map
\eqref{1.2} given in Proposition~\ref{Prop_charact_coderiv}.
\end{proof}\vspace*{0.05in}

As a consequence of Theorem~\ref{upper}, we get the simplified
necessary optimality condition \eqref{sm} for the infinite program
whose objective $\ph$ is merely {\em Fr\'echet differentiable} at
the optimal point $(0,\ox)$.

Note also that, in contrast to Theorem~\ref{lower}, we impose no
additional assumptions on $\ph$ and ${\cal F}$ in
Theorem~\ref{upper}. Furthermore, the resulting inclusion
\eqref{5.4} is proved to hold for {\em every} Fr\'echet upper
subgradient $(p^*,x^*)\in\Hat\partial^+\ph(0,\ox)$ in
Theorem~\ref{upper} instead of {\em some} $G$-subgradient
$(p^*,x^*)\in\partial_G\ph(0,\ox)$ in Theorem~\ref{lower}. On the
other hand, it occurs that $\Hat\partial^+\ph(0,\ox)=\emp$ in many
important situations (e.g., for convex objectives) while
$\partial_G\ph(0,\ox)\ne\emp$ for every local Lipschitzian
function on a Banach space. We refer the reader to
\cite[Remark~4.5]{CLMP10} and \cite[Commentary~5.5.4]{mor06b} for
extended comments on various classes of functions admitting upper
Fr\'echet subgradients and additional regularity properties
ensuring strong advantages of upper subdifferential optimality
conditions in comparison with their lower subdifferential
counterparts.\vspace*{0.05in}

{\bf Acknowledgements.} The authors are indebted to Radu Bo\c{t}
who suggested Proposition~\ref{Prop_distance} and the current
version of Lemma~\ref{Lem_distance} allowing us to remove a
certain boundedness assumption therein, which was present in the
previous version of the paper. We also very grateful to Alex Ioffe
who brought our attention to the recent paper \cite{is} and its
connections with some results of \cite{CLMP09}; see more
discussions in Remark~\ref{ioffe}.

\end{document}